\newtheorem{theorem}{Theorem}
\newtheorem{proposition}{Proposition}
\newtheorem{lemma}[proposition]{Lemma}
\newtheorem{corollary}{Corollary}
\theoremstyle{definition}
\newtheorem{definition}[proposition]{Definition}
\numberwithin{proposition}{section} % important bit
\renewcommand{\subset}{\subseteq}
\renewcommand{\hat}{\widehat}
\renewcommand{\epsilon}{\varepsilon}
\def\dist{{\rm dist}}
\def\diam{{\rm diam}}
\def\wherespace{\quad\text{where}\quad}
\def\foreachspace{\quad\text{for each}\quad}
\def\andspace{\quad\text{and}\quad}
\def\<{\langle}
\def\>{\rangle}
\def\({\left(}
\def\){\right)}
\def\calB{\mathcal{B}}
\def\calC{\mathcal{C}}
\def\calG{\mathcal{G}}
\def\calI{\mathcal{I}}
\def\calJ{\mathcal{J}}
\def\calP{\mathcal{P}}
\def\calS{\mathcal{S}}
\def\calT{\mathcal{T}}
\def\calU{\mathcal{U}}
\def\calV{\mathcal{V}}
\def\calW{\mathcal{W}}
\def\calX{\mathcal{X}}
\def\C{\mathbb{C}}
\def\N{\mathbb{N}}
\def\R{\mathbb{R}}
\def\T{\mathbb{T}}
\def\Z{\mathbb{Z}}
\numberwithin{equation}{section}
\title{Multiscale estimates for the condition number of \\ non-harmonic Fourier matrices}
\author{Weilin Li\footnote{CUNY City College. Email: wli6@ccny.cuny.edu}}
\begin{document}
\maketitle

\begin{abstract}
	This paper studies the extreme singular values of non-harmonic Fourier matrices. Such a matrix of size $m\times s$ can be written as $\Phi=[ e^{-2\pi i j x_k}]_{j=0,1,\dots,m-1, k=1,2,\dots,s}$ for some set $\calX=\{x_k\}_{k=1}^s$. Its condition number controls the stability of inversion, which is of great importance to super-resolution and nonuniform Fourier transforms. Under the assumption $m\geq 6s$ and without any restrictions on $\calX$, the main theorems provide explicit lower bounds for the smallest singular value $\sigma_s(\Phi)$ in terms of distances between elements in $\calX$. More specifically, distances exceeding an appropriate scale $\tau$ have modest influence on $\sigma_s(\Phi)$, while the product of distances that are less than $\tau$ dominates the behavior of $\sigma_s(\Phi)$. These estimates reveal how the multiscale structure of $\calX$ affects the condition number of Fourier matrices. Theoretical and numerical comparisons indicate that the main theorems significantly improve upon classical bounds and recover the same rate for special cases but with relaxed assumptions. 
\end{abstract}

\medskip
\noindent
{\bf 2020 Math Subject Classification:} 15A12, 15A60, 42A05, 42A15, 65F22

\medskip
\noindent
{\bf Keywords:} Fourier matrix, singular values, trigonometric interpolation, density, local sparsity

% % % % % % % % % % % % % % % % % % % %
\section{Introduction}

\subsection{Motivation}

\label{sec:background}

For any set $\calX=\{x_k\}_{k=1}^s\subset \T:=\R/\Z$ and natural number $m\geq s$, a {\it (non-harmonic) Fourier matrix} of size $m\times s$ is defined as 
$$
\Phi
:=\Phi(m,\calX)
:=\Big[ e^{-2\pi i j x_k} \Big]_{j=0,1,\dots,m-1, \, k=1,2,\dots,s}.
$$
This definition generalizes the discrete Fourier transform matrix, whereby $m=s$ and $\calX$ consist of $s$ equally spaced points in $\T$. Throughout the expository portions of this paper, we will implicitly assume that $|\calX|=s$, and we impose $m\geq s>1$ to avoid trivialities. 

Fourier matrices are classical objects that appear in numerous areas of mathematics. They provide a fundamental connection between linear algebra and trigonometric interpolation, which can be traced back to the work of Newton and Lagrange. They are matrix representations of the Fourier transform, so they naturally appear in the analysis of Fourier series \cite{zygmund1959trigonometric}, exponential sums \cite{young1981introduction}, and nonuniform Fourier transforms \cite{dutt1993fast}. Since $\Phi$ is also a Vandermonde matrix, it has full rank whenever $m\geq s$. When the rows of $\Phi$ are viewed as elements of $\C^s$, then the squared extreme singular values are the upper and lower frame constants \cite{casazza2012finite,duffin1952class}. 

For numerical applications, we require quantitative estimates for the extreme singular values of $\Phi$ to ensure that it can be inverted in numerical schemes without incurring significant error. We provide two motivational examples. The first is a simplified model for one-dimensional super-resolution, whereby $\calX$ represents the support of a discrete measure and $\Phi$ is the matrix representation of the Fourier transform of the measure sampled at $m$ consecutive integers. The problem is to recover $\calX$ through noisy Fourier data, and even though this is a nonlinear inverse problem, the condition number of $\Phi$ is closely related to performance of algorithms \cite{li2021stable,li2020super,li2022stability} and fundamental limits of recovery \cite{donoho1992superresolution,demanet2015recoverability,li2021stable,batenkov2021super}. The second is a connection to the nonuniform discrete Fourier transform of Type I. Here, $\Phi$ is the NUDFT Type I matrix that evaluates  the first $m$ Fourier series coefficients of the measure on $\calX$ for the periodic domain $\T$, see \cite{dutt1993fast,plonka2018numerical} for an overview. This is an over-determined linear system, so the condition number of $\Phi$ is pertinent to the least squares solution of fitting data by exponential sums. 

While classical papers such as \cite{gautschi1963inverses,cordova1990vandermonde,berman2007perfect} concentrated on square matrices, tall ones, where $m$ may be significantly larger than $s$, tend to be better conditioned. Tall Fourier matrices have received considerable interest recently \cite{liao2016music,moitra2015matrixpencil,liao2016music,aubel2019vandermonde,li2021stable,batenkov2020conditioning,batenkov2021single,kunis2020smallest,kunis2021condition}. This is partly due to modern applications in signal and image processing, where rectangular matrices appear more frequently, since $m$ represents the number of measurements or parameters, while $s$ corresponds to the number of constraints, see \cite{donoho1992superresolution,fannjiang2010compressed,benedetto2020super,li2020super,li2022stability,chui2022super} and references therein. It is worth noting that parallel to this line of research, the approximation properties of trigonometric interpolation in the $m\geq s$ regime has received interest \cite{li2021generalization,xie2022overparameterization,ren2022a} due to connections with over-parameterization in machine learning. 

The condition number $\kappa(\Phi(m,\calX))$ greatly depends on the ``Rayleigh length" $\frac 1 m$ versus the ``geometry" of $\calX$. The latter can be partially described by the {\it minimum separation} of $\calX$, defined as 
$$
\Delta(\calX):=\min_{j\not=k} |x_j-x_k|_\T, \wherespace 
|t|_\T := \min_{n\in\Z} |t-n|.
$$
Letting $\sigma_k(\Phi)$ denote the $k$-th largest singular value of $\Phi$, it was shown in \cite{aubel2019vandermonde} that
\begin{equation}
	\label{eq:wellseparated}
	\text{if}\quad \Delta(\calX)>\frac 1 m, \quad\text{then} \quad 
	\sqrt {m -\frac{1}{\Delta(\calX)}}
	\leq \sigma_s(\Phi(m,\calX))
	\leq \sigma_1(\Phi(m,\calX))
	\leq \sqrt {m +\frac{1}{\Delta(\calX)}}.
\end{equation}
The intuition behind this inequality is that the columns of $\Phi$ are almost orthogonal. This result and a closely related one in \cite{moitra2015matrixpencil}, are proved using analytic number theory methods. 

On the other hand, when $\Delta(\calX) \leq \frac{1}{m}$, simple numerical experiments, see \cite{li2021stable,batenkov2020conditioning}, show that $\sigma_s(\Phi(m,\calX))$ does not follow the behavior in \eqref{eq:wellseparated}. This makes intuitive sense since if $|x_k-x_\ell|_\T$ is small, then the $k$-th and $\ell$-th columns of $\Phi$ are highly correlated, which results in a large condition number. If it is significantly larger than $\sqrt s$, then the smallest singular value is the culprit because we have the trivial estimate $\sigma_1(\Phi)\leq \|\Phi\|_F=\sqrt {ms}$, where $\|\cdot\|_F$ denotes the Frobenius norm. 

Accurate bounds for the smallest singular value have been obtained under specific scenarios, namely when $\calX$ can be partitioned in subsets called ``clumps", where each clump is contained in an interval whose length is on the order of $\frac 1 m$. In contrast to the separation condition required in \eqref{eq:wellseparated}, without any conditions on $\Delta(\calX)$ relative to $m$, the results in \cite{li2021stable,batenkov2020conditioning,kunis2020smallest,batenkov2021single,batenkov2021spectral} roughly state that if each clump has cardinality at most $\lambda$ and the clumps are sufficiently far away from each other, then 
\begin{equation}
	\label{eq:clumps}
	c({\lambda, s})\sqrt m \, (m \Delta(\calX))^{\lambda-1}
	\leq \sigma_s(\Phi(m,\calX))
	\leq C({\lambda, s}) \sqrt m \, (m \Delta(\calX))^{\lambda-1}.
\end{equation}
Since the exponent $\lambda-1$ may be significantly smaller than $s-1$, this bound shows that $\sigma_s(\Phi)$ depends on the local geometry of $\calX$. It captures the intuition that columns of $\Phi$ which correspond to different clumps are almost orthogonal with respect to each other, so we expect the conditioning of $\Phi$ to only depend on each clump separately.

\subsection{A motivational multiscale example} 
\label{sec:motivation}

To better illustrate the limitations of prior work, let us consider a typical set with multiscale structure such as 
\begin{gather}
	\begin{split}
		\calX&:=\calX_1\cup \calX_2\cup \calX_3,\wherespace \\
		\calX_1 := \{0,\tfrac 1 {90}, \tfrac 2{90}, \tfrac 3{90}\}, \quad
		&\calX_2 = \tfrac 1 3 + \{0,\tfrac 1 {200}, \tfrac 2  {200}\}, \andspace
		\calX_3 = \tfrac 2 3 + \{0, \tfrac 1 {500}\}. 
	\end{split}
	\label{eq:Xmotivation}
\end{gather}
We have defined in $\calX$ this way to emphasize that its three disjoint subsets $\calX_1$, $\calX_2$, and $\calX_3$ each have significantly different minimum separations and should be treated as sets with completely different scales. The set $\calX$ is shown in Figure \ref{fig:motivational}. Our problem is to determine behavior of $\sigma_s(\Phi(m,\calX))$ as a function of $m$. Clearly $\Phi(m,\calX)$ only has full rank when $m\geq s=9$ and \eqref{eq:wellseparated} kicks in when $m>\frac 1 {\Delta(\calX)} =500$. What about the missing range $m\in [9,500]\cap\N$?

For the range $m\in [9,600]\cap\N$, none of the bounds in \cite{li2021stable,batenkov2020conditioning,kunis2020smallest,batenkov2021single} are applicable. The reason is that while $\calX$ consists of three ``clumps" $\calX_1$, $\calX_2$, and $\calX_3$, they are too close to each other and do not satisfy these theorems' assumptions, or such theorems have implicit constants in their separation criterion that cannot be explicitly determined. It is important to remark that the aforementioned papers concentrated on the super-resolution limit, whereby either $m$ is sufficiently large and there is a sequence of $\calX_m$ for which $\Delta(\calX_m)\to 0$, or alternatively, $m\to\infty$ and $\Delta(\calX_m)\to 0$ with some relationship between $m$ and $\Delta(\calX_m)$. Hence, it is not that surprising they cannot be directly used for fixed $\calX$ and $m$. 

\begin{figure}[h]
	\centering 
	\begin{subfigure}{0.49\textwidth}
		\centering
		\includegraphics[height=0.22\textheight]{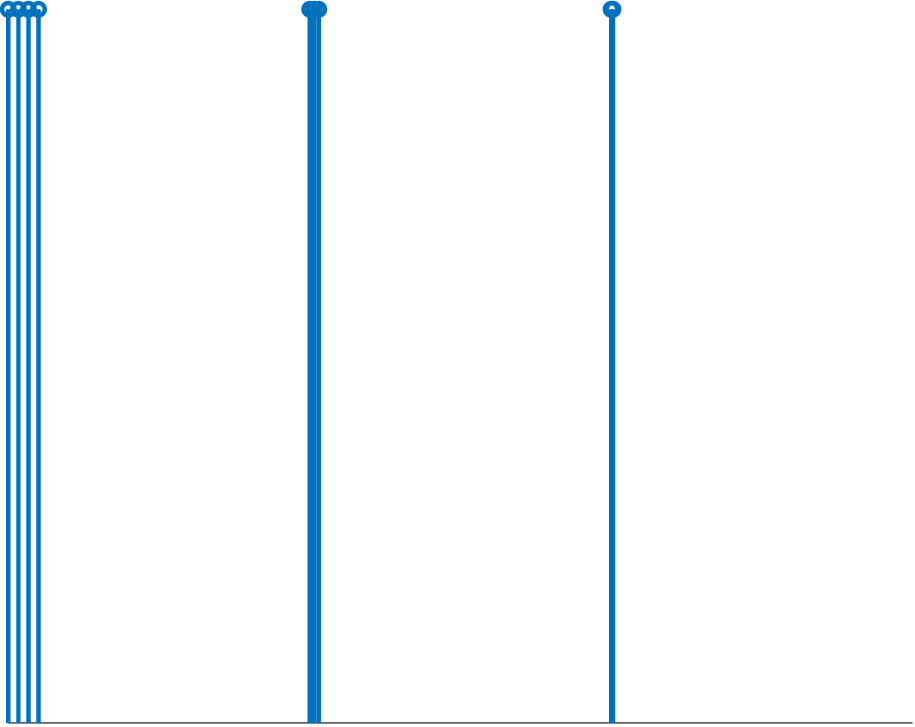}
	\end{subfigure}
	\begin{subfigure}{0.49\textwidth}
		\centering 
		\includegraphics[height=0.25\textheight]{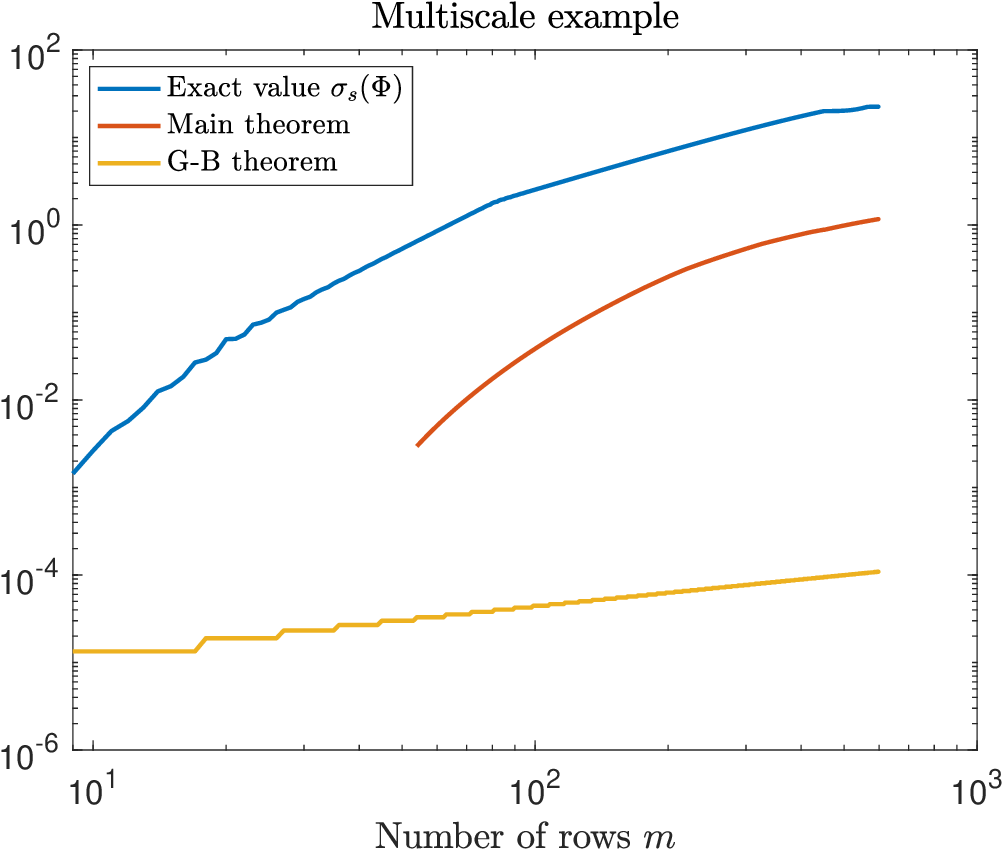}
	\end{subfigure}
	\caption{Left: Plot of $\calX$ defined in \eqref{eq:Xmotivation}. Right: Plot of $\sigma_s(\Phi(m,\calX))$ and two different lower bounds as functions of $m$ for $\calX$ defined in \eqref{eq:Xmotivation}. }
	\label{fig:motivational}
\end{figure}

In contrast, this experiment is for fixed $\calX$ and variable $m$ within a finite range. Determining $\sigma_s(\Phi(m,\calX))$ is naturally a discrete problem -- there are no large or small parameters to exploit. We are only aware of one prior work that applies to this example. It results from combining Gautschi \cite{gautschi1963inverses} and Baz\'an \cite{bazan2000conditioning} to obtain 
\begin{equation}
	\label{eq:classical}
	\sigma_s(\Phi(m,\calX))
	\geq \sqrt{ \frac {\lfloor \frac m s\rfloor} s} \, \min_{1\leq k\leq s} \Bigg\{\prod_{j\not=k} \frac{|e^{2\pi ix}-e^{2\pi i x_k}|} 2\Bigg\}. 
\end{equation}

We refer to this as the Gautschi-Baz\'an theorem, and will provide more details on its derivation in the comparisons section. Aside from this inequality, a main result of this paper, \cref{thm:main}, is applicable to this discrete problem, but with a mild restriction that $m\geq 6s$. The true $\sigma_s(\Phi(m,\calX))$, our main theorem, and the Gautschi-Baz\'an bound are displayed in Figure \ref{fig:motivational}. We see that our theorem offers a substantial improvement and better captures the true behavior. Just to highlight a dramatic improvement offered by \cref{thm:main}, when $m=400$, our main theorem underestimates the true value by a multiplicative factor of 21.0038, whereas the Gautschi-Baz\'an bound is off by a factor of 1.9687e+05.

\section{Main results}

\label{sec:main}

The goal of this paper is to provide explicit, interpretable, and accurate bounds for $\sigma_s(\Phi(m,\calX))$ for arbitrary $\calX$ when $\Delta(\calX)<\frac 1 m$. Doing so is a tricky balancing act. We require conditions on $\calX$ that are not too restrictive, yet are sufficiently informative enough that a resulting lower bound is not too loose. We avoid restrictive assumptions by working with general geometric notions. 
\begin{definition}
	Let $\tau \in (0,\frac 1 2]$ and $\calX\subset\T$ be a finite set. The $\tau$ {\it local sparsity of $\calX$} is
	$$
	\nu(\tau,\calX):=\max_{x\in\calX} \, \big|\{x'\in\calX \colon |x-x'|_\T \leq \tau \} \big| 
	\andspace \nu(\tau,\emptyset):=0. 
	$$
\end{definition} 

The $\tau$ local sparsity is the maximum number of elements in $\calX$ contained with a $\tau$ neighborhood of any $x\in \calX$. By definition, $\nu(\tau,\calX)=1$ if and only if $\Delta(\calX)>\tau$. Importantly, we have $\nu(\tau,\calU)\leq \nu(\tau,\calX)$ whenever $\calU\subset \calX$. If $\tau= \frac 1 2$, then $\nu(\tau,\calX)=|\calX|=s$, but it is possible that the local sparsity is significantly smaller than $s$. 

\begin{definition}
	For any $m\in \N_+$ and $\tau\in (0,\frac 1 2]$, we say a finite set $\calX\subset\T$ satisfies the $(m,\tau)$ {\it density criterion} if
	$$
	\frac{3\nu(\tau, \calX)} \tau \leq m.
	$$
\end{definition}

We call it the {density criterion} because $\frac {\nu(\tau,\calX)}{2\tau}$ can be interpreted as the density of $\calX$ at scale $2\tau$, so the assumption asserts that it cannot be bigger than $\frac{m}{6}$. This criterion is not difficult to fulfill for some $\tau$. Indeed, if we assume $m\geq 6s$ and select $\tau=\frac 1 2$, then $\frac{3\nu(\tau,\calX)}{\tau}=6s\leq m$, so the density criterion is satisfied. However, if $\calX$ satisfies the $(m,\tau)$ density criterion, then there may be infinitely many other $\tau$ that are also valid, and the choice of $\tau$ will influence the below estimates.  

We are almost ready to present our first main result. When interpreting the expressions in this paper, we use the standard convention that the product over an empty set is defined as 1. To simplify some of the notation that will appear in this paper, we define the subsets,
$$
\calB(x,\tau,\calX)
:=\{x'\in\calX\colon |x-x'|_\T\leq \tau\}, \andspace 
\calG(x,\tau,\calX):=\{x'\in\calX\colon |x-x'|_\T> \tau\}.
$$ 
We will refer to these as the ``bad" and ``good" sets respectively, and this terminology will make sense later. We define a special function $\phi\colon [1,\infty)\to [1,\infty)$ by
\begin{equation}
	\label{eq:phidef}
	\phi(t):=\frac t {\lfloor t \rfloor}.
\end{equation}
This function appears in several bounds since our methods depend on number theoretic properties of several quantities. Note that $\phi(t)\leq \min\{2,1+\frac 1 {t-1}\}$, so in particular, $\phi(t)\to 1$ as $t\to\infty$. Terms that involve $\phi(t)$ are inconsequential when $t$ is large.

\begin{theorem}
	\label{thm:main}
	Let $m,s\in\N_+$ such that $s\geq 2$ and $m\geq 6s$. Suppose $\calX=\{x_k\}_{k=1}^s\subset\T$ and pick any $\tau$ such that $\calX$ satisfies the $(m,\tau)$ density criterion. For each $k=1,2,\dots,s$, define 
	$$
	\calG_k:=\calG(x_k,\tau,\calX), \quad \alpha_k:=\frac{ |\calB(x_k,\tau,\calX)|}{2m  - 4\nu(\tau,\calG_k)\tau^{-1}}, 
	$$ 
	and the subsets 
	$$\calI_k:=\{x\in\calX\colon 0<|x-x_k|\leq \alpha_k\} \andspace \calJ_k:=\{x\in\calX\colon  \alpha_k <|x-x_k|\leq \tau \}.$$ 
	Then we have
	\begin{equation}
		\label{eq:main1}
		\frac{1}{\sigma_s^2(\Phi(m,\calX))} 
		\leq \sum_{k=1}^s 2^{\nu(\tau,\calG_k)} \frac{2\alpha_k }{1-2\alpha_k} \prod_{x\in \calJ_k} \phi\(\frac 1 {2|x-x_k|_\T}\)^2  
		\prod_{x\in\calI_k} \frac{\alpha_k^2}{(1-2\alpha_k)^2|x-x_k|_\T^2},
	\end{equation}
	and in particular, 
	\begin{equation}
		\label{eq:main2}
		{\sigma_s(\Phi(m,\calX))}
		\geq \min_{1\leq k\leq s} \  \Bigg\{ \frac 1 {\sqrt{4s \alpha_k}} \frac{1}{\sqrt{2^{\nu(\tau,\calG_k)}}} \frac{1}{2^{|\calJ_k|}}
		\prod_{x\in\calI_k} \frac{|x-x_k|_\T} {2\alpha_k} \Bigg\}.
	\end{equation}
\end{theorem}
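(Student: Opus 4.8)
The plan is to establish \eqref{eq:main1} by a duality argument that turns the spectral estimate into the construction of interpolating trigonometric polynomials, and then to obtain \eqref{eq:main2} from \eqref{eq:main1} by elementary manipulations. If $L\in\C^{s\times m}$ is any left inverse of $\Phi:=\Phi(m,\calX)$, i.e.\ $L\Phi=I_s$, then for every $c\in\C^s$ we have $\|c\|_2=\|L\Phi c\|_2\le\|L\|_2\|\Phi c\|_2$, so $\sigma_s(\Phi)\ge 1/\|L\|_2\ge 1/\|L\|_F$. Setting $Q_k(x):=\sum_{j=0}^{m-1}L_{k,j}\,e^{-2\pi ijx}$, the identity $L\Phi=I_s$ is exactly the interpolation condition $Q_k(x_\ell)=\delta_{k,\ell}$ for all $k,\ell$, and Parseval on the frequencies $\{0,1,\dots,m-1\}$ gives $\|L\|_F^2=\sum_{k=1}^s\|Q_k\|_{L^2(\T)}^2$. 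Hence it suffices to construct, for each $k$, a trigonometric polynomial $Q_k$ of degree at most $m-1$ with $Q_k(x_\ell)=\delta_{k,\ell}$ whose squared $L^2(\T)$ norm is at most the $k$-th summand on the right of \eqref{eq:main1}; summing then gives \eqref{eq:main1}. Granting \eqref{eq:main1}, the bound \eqref{eq:main2} follows by replacing the sum by $s$ times its largest term, using the inequalities $0<\alpha_k\le\tau/2\le\tfrac14$ (a direct computation from the density criterion, which also puts every factor $1-2\alpha_k$ into $[\tfrac12,1)$) and $\phi\le 2$, and taking a reciprocal square root.

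It remains to build and estimate the $Q_k$, which is the crux. For fixed $k$ I would take $Q_k$ to be a product of one building block per scale. The nodes of $\calI_k$ are cancelled by degree-one Lagrange factors; the nodes of $\calJ_k$ are cancelled by Dirichlet/Fej\'er-type factors whose integer degrees are chosen close to $\tfrac{1}{2|x-x_k|_\T}$ — the discrepancy between this quantity and its floor is exactly what produces the special function $\phi$ of \eqref{eq:phidef}; the nodes of $\calG_k$ are cancelled by factors organized according to a clump decomposition of $\calG_k$; and finally one multiplies by a Fej\'er-type localizing kernel that concentrates the $L^2$ mass of $Q_k$ near $x_k$ on a scale comparable to $\alpha_k$. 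The peculiar normalization $\alpha_k=|\calB(x_k,\tau,\calX)|/(2m-4\nu(\tau,\calG_k)\tau^{-1})$ is calibrated precisely so that the localizer's degree plus the degrees of all the Lagrange factors does not exceed $m-1$; verifying this degree budget is where the $(m,\tau)$ density criterion $3\nu(\tau,\calX)\tau^{-1}\le m$ and the standing assumptions $m\ge 6s$, $s\ge 2$ are consumed. To bound $\|Q_k\|_{L^2(\T)}^2$ I would use the localizer to reduce matters to the supremum of the Lagrange part over a neighborhood of $x_k$ of radius of order $\alpha_k$ (the localizer contributing the factor of order $\alpha_k/(1-2\alpha_k)$ and the tail being negligible), and then bound each Lagrange factor on that neighborhood: a factor from $\calI_k$ by $\alpha_k/((1-2\alpha_k)|x-x_k|_\T)$, a factor from $\calJ_k$ by $\phi(\tfrac{1}{2|x-x_k|_\T})$, and the entire $\calG_k$ product jointly by $2^{\nu(\tau,\calG_k)/2}$. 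Squaring and multiplying these together reproduces the $k$-th summand of \eqref{eq:main1}.

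The main obstacle, I expect, is the joint estimate for the far factors. A priori the $\calG_k$ part of $Q_k$ is a product of $|\calG_k|$ Lagrange-type factors, so the naive estimate is $2^{|\calG_k|/2}$; getting the exponent down to the local sparsity $\nu(\tau,\calG_k)$ — so that the total far-away contribution stays bounded no matter how many far clusters there are — is the delicate step, and I expect it to exploit the clump structure of $\calG_k$ (every $\tau/2$-ball meets $\calG_k$ in at most $\nu(\tau,\calG_k)$ points) together with near-cancellation of the far factors on the tiny $\alpha_k$-scale neighborhood that controls $\|Q_k\|_{L^2(\T)}$. A secondary but also technical issue is the degree bookkeeping from the previous paragraph: the localizer must be concentrated enough to beat the Lagrange factors in $L^2$ while keeping $\deg Q_k<m$, and it is the precise form of $\alpha_k$ together with the density criterion that makes these two demands compatible and yields the clean statement \eqref{eq:main1} rather than a version cluttered with extra constants.
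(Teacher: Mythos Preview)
Your duality reduction and your treatment of the near and intermediate nodes are essentially the paper's: dilated single-exponential factors with dilation $\lfloor n_k/r_k\rfloor$ on $\calI_k$ and $\lfloor 1/(2|x-x_k|_\T)\rfloor$ on $\calJ_k$, combined with a Dirichlet-type averaging that plays the role of your localizer and yields the $\alpha_k/(1-2\alpha_k)$ factor (\cref{lem:badset1}). The passage from \eqref{eq:main1} to \eqref{eq:main2} is also correct.

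The gap is your mechanism for the far nodes $\calG_k$. You propose a \emph{clump} decomposition of $\calG_k$ together with ``near-cancellation of the far factors on the tiny $\alpha_k$-scale neighborhood''; but a product of $|\calG_k|$ ordinary Lagrange factors on an $\alpha_k$-ball around $x_k$ is of size $\exp\big(c\,|\calG_k|\,\alpha_k/\tau\big)$, and under the hypotheses (e.g.\ $\nu(\tau,\calX)$ bounded, $\tau\sim 1/m$, $|\calG_k|\sim s$) this exponent grows like $s$, not like $\nu(\tau,\calG_k)$. Globally the situation is worse: those factors can be as large as $2^{|\calG_k|}$, which no localizer of degree $O(m)$ can absorb. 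So ``near-cancellation'' does not collapse the exponent, and the paper does \emph{not} rely on the localizer to confine $g_k$ to a neighborhood---it bounds $\|g_k\|_{L^\infty(\T)}$ globally.

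The paper's device is different in two respects. First, it uses a \emph{sparsity} decomposition (\cref{prop:decomp}), not a clump decomposition: $\calG_k$ is partitioned into $\nu:=\nu(\tau,\calG_k)$ subsets $\calG_{k,1},\dots,\calG_{k,\nu}$, each with $\Delta(\calG_{k,j}\cup\{x_k\})>\tau$. Second, for each well-separated piece it feeds the sharp bound \eqref{eq:wellseparated} (Selberg--Vaaler extremal machinery) back through \cref{prop:interpolation2} to obtain $g_{k,j}\in\calP_{\lceil 2/\tau\rceil}$ with $g_{k,j}(x_k)=1$, $g_{k,j}|_{\calG_{k,j}}=0$, and $\|g_{k,j}\|_{L^\infty}\le\sqrt 2$. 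The product $g_k=\prod_j g_{k,j}$ then satisfies $\|g_k\|_{L^\infty}\le 2^{\nu/2}$ and $\deg g_k\le 2\nu/\tau$; the exponent $\nu$ is simply the \emph{number of factors}, with no cancellation involved. The density criterion is consumed here, to fit $\deg g_k$ into the budget $m-1-\deg b_k$.
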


Both inequalities in this theorem provide ``multiscale" lower bounds for $\sigma_s(\Phi)$. Let us explain what this terminology means in the context of \eqref{eq:main2}, which is a simplified version of \eqref{eq:main1}. Here, we fix a reference point $x_k\in\calX$. The ranges $(\tau,\frac 12]$, $(\alpha_k,\tau]$, and $(0,\alpha_k]$ consist of the coarse, intermediate, and small scales respectively, and the elements in $\calX$ whose distance to $x_k$ lie in these three ranges are $\calG_k$, $\calJ_k$, and $\calI_k$, respectively. An example is shown in Figure \ref{fig:threescales}. 
\begin{enumerate}[(a)] \itemsep-2pt
	\item 
	The set $\calG_k$ contributes a factor of $2^{-\nu(\tau,\calG_k)/2}$ to the smallest singular value. Note that $\nu(\tau,\calG_k)$ may be significantly smaller than $s-1$ for many types of $\calX$, including the motivational example. Hence, points in $\calX$ that are $\tau$ away from $x_k$ typically have little influence on the lower bound.
	\item 
	The set $\calJ_k$ contributes a factor of $2^{-|\calJ_k|}$, so each element in $\calJ_k$ contributes a multiplicative factor of $\frac 12$ to the lower bound. Note that $\alpha_k\leq \frac  \tau 2$ due to the $(m,\tau)$ density criterion, so $\alpha_k$ is naturally the next smaller scale following $\tau$. 
	\item 
	The set $\calI_k$ has the greatest amount of influence on the lower bound. Notice that each term inside the product in \eqref{eq:main2} is at most $\frac 1 2$, but may be significantly smaller depending on the structure of $\calX$ near $x_k$. 
	For instance, if we let $s_{k,\ell}\in \N$ denote the number of elements in $\calX$ whose distance to $x_k$ lies in $(2^{-\ell-1}\alpha_k,2^{-\ell}\alpha_k]$, then
	$$
	\log_2 \( \prod_{x\in\calI_k} \frac{|x-x_k|_\T} {2\alpha_k} \)
	\asymp -\sum_{\ell=0}^\infty s_{k,\ell}\, \ell. 
	$$	
	This illustrates that the product term is not equally influenced by all distances less than $\alpha_k$ and that it actually depends on the structure of $\calX$ near $x_k$ at infinitely many finer scales. 
\end{enumerate}

\begin{figure}[h]
	\centering
	\begin{tikzpicture}
		[scale=1.1,
		place/.style={circle,draw=blue!50,fill=blue!20,thick},
		place2/.style={diamond,fill=blue,inner sep = 0.6mm},
		place4/.style={rectangle,fill=orange,inner sep = 0.9mm},
		place3/.style={regular polygon, regular polygon sides=3,fill=red,thick,inner sep = 0.4mm}]
		\draw[thick] (-7,0) -- (7,0);
		\node at (0,0) [circle,fill=black,inner sep = 0.7mm] {};
		\node[below] at (0,-0.2) {$x_k$};
		\node at (1,0) [place3] {};
		\node at (.25,0) [place3] {};
		\node at (-0.5,0) [place3] {};
		\node at (2,0) [place4] {};
		\node at (2.25,0) [place4] {};
		\node at (-3,0) [place4] {};		
		\node at (2.5,0) [place4] {};
		\node at (-5,0) [place2] {};
		\node at (-5.5,0) [place2] {};
		\node at (-5.25,0) [place2] {};
		\node at (4,0) [place2] {};	
		\node at (-3.5,0) [place2] {};		
		\draw[red,thick,<->] (-1,0.4) -- (1,0.4);
		\draw[red,thick](0,0.3) -- (0,0.5);
		\node[red,above] at (0,0.45) {$2\alpha_k$};
		\draw[orange,thick,<->] (-3,1.3) -- (3,1.3);
		\draw[orange,thick](0,1.2) -- (0,1.4);
		\node[orange,above] at (0,1.4) {$2\tau$};
		\node at (-6.4,2) [place3] {};
		\node[right,color=red] at (-6.2,2) {$\calI_k$};
		\node at (-6.4,1.5) [place4] {};
		\node[right,color=orange] at (-6.2,1.5) {$\calJ_k$};
		\node at (-6.4,1) [place2] {};
		\node[right,color=blue] at (-6.2,1) {$\calG_k$};
		\draw[thick] (-6.75,.75) -- (-6.75,2.25) -- (-5.3,2.25) -- (-5.3,.75)--(-6.75,.75);
	\end{tikzpicture}
	\caption{An example of the three sets $\calI_k$, $\calJ_k$ and $\calG_k$ defined in \cref{thm:main}.}
	\label{fig:threescales}
\end{figure}

If there is a $\tau\ll \frac 1 2$ for which the density criterion holds, then this theorem effectively communicates a localization phenomenon. Even though the Fourier transform is non-local, in the sense that all elements of $\calX$ participate, only those whose distances are closer than $\tau$ substantially contribute. On the other hand, if $\tau=\frac 1 2$ is selected, then there is no localization.  

Motivated by inverse problems where only weak information about $\calX$ is known or can be reasonably assumed, we provide a different lower bound for $\sigma_s(\Phi)$ in terms of any lower bound for $\Delta(\calX)$.  The following is our second main result.
\begin{theorem}
	\label{thm:main2}	
	Let $m,s\in\N_+$ such that $s\geq 2$ and $m\geq 6s$, and let $\delta \in (0,\frac 1 m]$. Suppose $\calX\subset\T$ is a set of cardinality $s$ with $\Delta(\calX)\geq \delta$, and pick any $\tau$ such that $\calX$ satisfies the $(m,\tau)$ density criterion. For each $k=1,2,\dots,s$, define 
	\begin{equation*}
		\calG_k:=\calG(x_k,\tau,\calX), \quad r_k:=|\calB(x_k,\tau,\calX)|, \andspace n_k:= \left\lfloor m- \frac{2\nu(\tau,\calG_k)} \tau \right\rfloor. 
	\end{equation*}
	Then we have
	\begin{equation}
		\label{eq:main3}
		\frac{1}{\sigma_s^2(\Phi(m,\calX))}
		\leq \frac{4e^2}{\pi^2} \sum_{k=1}^s 2^{\nu(\tau,\calG_k)}  \frac {\phi(\frac {n_k} {r_k})}{r_k n_k} \( \frac{2e \phi(\frac {n_k} {r_k})}{\sin(\frac \pi 2 n_k\delta)} \)^{2r_k-2},
	\end{equation}
	and in particular, 
	\begin{equation}
		\label{eq:main4}
		\sigma_s(\Phi(m,\calX))
		\geq \frac{\pi}{2 e} \sqrt{\frac{m}{6s}} \, \min_{1\leq k\leq s} \, \Bigg\{ \sqrt{\frac{r_k}{2^{\nu(\tau,\calG_k)}}} \( \frac{m \delta}{12 e} \)^{r_k-1}\Bigg\}.
	\end{equation}
\end{theorem}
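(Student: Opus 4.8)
The plan is to use the duality between $\sigma_s(\Phi)$ and minimum-norm trigonometric interpolation, and then to build interpolants as products of a ``good-set'' factor and a ``clump'' factor adapted to the splitting $\calX=\calB(x_k,\tau,\calX)\cup\calG(x_k,\tau,\calX)$ around each node; this is the same skeleton as the proof of \cref{thm:main}, the novelty being a clump factor tailored to the weaker hypothesis $\Delta(\calX)\ge\delta$. First I would record the reduction. Since $\Phi$ has full column rank, $\frac{1}{\sigma_s(\Phi)}=\|(\Phi^*)^\dagger\|_{2\to2}=\max_{\|c\|_2=1}\min\{\|u\|_2:\Phi^*u=c\}$; identifying $u\in\C^m$ with the coefficient sequence of $p_u(t)=\sum_{j=0}^{m-1}u_j e^{2\pi ijt}$, the constraint $\Phi^*u=c$ becomes $p_u(x_k)=c_k$ for all $k$, so $\sigma_s(\Phi)^{-1}$ equals the largest, over unit $c$, of the smallest $\ell^2$ coefficient norm of a trigonometric polynomial with spectrum in $\{0,\dots,m-1\}$ interpolating $c$ on $\calX$. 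Fixing, for each $k$, any polynomial $p_k$ with spectrum in $\{0,\dots,m-1\}$ and $p_k(x_j)=\delta_{jk}$, linearity and Cauchy--Schwarz give $\sigma_s(\Phi)^{-2}\le\sum_{k=1}^s\|\hat p_k\|_{\ell^2}^2$, so it suffices to construct such $p_k$ with $\|\hat p_k\|_{\ell^2}^2$ bounded by the $k$-th summand of \eqref{eq:main3}.

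I would factor $p_k=w_k g_k$. The good-set factor $w_k$ should vanish on $\calG_k:=\calG(x_k,\tau,\calX)$, satisfy $w_k(x_k)=1$, have spectrum strictly inside $\{0,\dots,2\nu(\tau,\calG_k)/\tau\}$, and have $\ell^1$ coefficient norm at most $2^{\nu(\tau,\calG_k)/2}$; such a $w_k$ is produced by covering $\T$ with arcs of length at most $\tau$ (on each of which $\calG_k$ has at most $\nu(\tau,\calG_k)$ points) and multiplying at most $\nu(\tau,\calG_k)$ Dirichlet-type factors of individual degree $\approx 2/\tau$ --- this is exactly the good-set lemma already used for \cref{thm:main}, which I would invoke. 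The clump factor $g_k$ should satisfy $g_k(x_k)=1$, vanish on $\calB(x_k,\tau,\calX)\setminus\{x_k\}$, have spectrum in $\{0,\dots,n_k\}$, and obey $\|\hat g_k\|_{\ell^2}^2\le\frac{4e^2}{\pi^2}\frac{\phi(n_k/r_k)}{r_k n_k}\big(\frac{2e\,\phi(n_k/r_k)}{\sin(\frac{\pi}{2} n_k\delta)}\big)^{2r_k-2}$. Granting these, $p_k=w_kg_k$ interpolates $e_k$ (since $w_k$ kills $\calG_k$ and $g_k$ kills the rest of $\calB(x_k,\tau,\calX)$), has spectrum in $\{0,\dots,m-1\}$ by the definition of $n_k$, and satisfies $\|\hat p_k\|_{\ell^2}\le\|\hat w_k\|_{\ell^1}\,\|\hat g_k\|_{\ell^2}$ by Young's convolution inequality; multiplying the two bounds gives the $k$-th summand of \eqref{eq:main3}, and summing over $k$ proves \eqref{eq:main3}.

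The main obstacle is the construction and analysis of $g_k$. I would take it to be a product of $r_k$ factors sharing the degree budget $n_k$ in $r_k$ roughly equal blocks of size $\lfloor n_k/r_k\rfloor$ --- this is the source of the term $\phi(n_k/r_k)$ --- consisting of one Dirichlet kernel centered at $x_k$ (which contributes the factor $1/(r_k n_k)$ to the $\ell^2$ norm) together with, for each of the $r_k-1$ other points $y\in\calB(x_k,\tau,\calX)$, one Dirichlet-type factor vanishing at $y$ and equal to $1$ at $x_k$. The point of the argument is to lower bound the product of the normalizing constants of these factors by a quantity depending on $\calX$ only through $\Delta(\calX)\ge\delta$: using $|x_k-y|_\T\ge\delta$ and the $\delta$-separation inside $\calB(x_k,\tau,\calX)$ --- so that, after sorting on each side of $x_k$, the distances to $x_k$ dominate $\delta,2\delta,3\delta,\dots$ --- together with a Stirling-type bound for the resulting product of sine values, one extracts a per-point factor of order $\sin(\frac{\pi}{2} n_k\delta)$ and the constants $2e$ and $4e^2/\pi^2$. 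I expect this to be the delicate step: obtaining a clean, position-independent lower bound for a product of sine values sampled at the relevant scales while losing no more than the stated constants is the real difficulty, whereas the degree accounting, the good-set lemma, and Young's inequality are comparatively routine.

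Finally, \eqref{eq:main4} follows from \eqref{eq:main3} by coarsening: bound $r_k\le s$, use $\nu(\tau,\calG_k)\le\nu(\tau,\calX)\le m\tau/3$ from the density criterion (which forces $n_k\ge m/3$), use $\phi\le 2$ and $\sin x\ge\frac{2}{\pi}x$ on $[0,\frac{\pi}{2}]$ (noting $\frac{\pi}{2} n_k\delta\le\frac{\pi}{2} m\delta\le\frac{\pi}{2}$), and pull the maximum over $k$ out of the sum; the numerical constants are arranged so that the prefactor collapses exactly to $\frac{\pi}{2e}\sqrt{m/(6s)}$ and each per-point factor becomes $m\delta/(12e)$.
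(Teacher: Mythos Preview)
Your proposal follows the paper's proof closely: the duality reduction to Lagrange interpolants, the factorization $p_k=w_k g_k$ along the good/bad split, the bad-set construction via dilated single-zero factors together with a Dirichlet averaging block, the extremal argument (pushing the bad-set points to $\pm\delta,\pm 2\delta,\dots$) followed by a Stirling-type bound to extract the $\sin(\tfrac{\pi}{2}n_k\delta)$ and $2e$ constants, and the coarsening to \eqref{eq:main4} are all exactly the paper's steps (the bad-set part is \cref{lem:badset2}).

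Two corrections are worth recording. First, the good-set lemma (\cref{prop:goodset}) only yields $\|w_k\|_{L^\infty}\le 2^{\nu(\tau,\calG_k)/2}$, not $\|\hat w_k\|_{\ell^1}\le 2^{\nu(\tau,\calG_k)/2}$; the paper therefore combines the two factors via H\"older, $\|w_kg_k\|_{L^2}\le\|w_k\|_{L^\infty}\|g_k\|_{L^2}$, rather than Young's convolution inequality. Your $\ell^1$ claim is not delivered by the lemma you invoke, though the fix is trivial. Second, your description of how $w_k$ is built (``covering $\T$ with arcs of length at most $\tau$ \dots\ multiplying Dirichlet-type factors'') does not match \cref{prop:goodset}: the paper instead uses the sparsity decomposition (\cref{prop:decomp}) to split $\calG_k$ into $\nu(\tau,\calG_k)$ subsets each of separation $>\tau$, and for each such subset takes the minimum-$L^2$ interpolant supplied by \eqref{eq:wellseparated} and \cref{prop:interpolation2}; these factors are not Dirichlet kernels and do not come with the $\ell^1$ coefficient control you assert.
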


We emphasize that $\delta$ is an independent parameter, so the theorem is applicable to sets for which $\Delta(\calX)$ is arbitrarily small. This theorem is written from the perspective of $\delta$. In \eqref{eq:main4}, the exponent on $\delta$ is $r_k-1$, which shows that interactions between $x$ and $x_k$ at scales smaller than $\tau$ are most significant. This theorem assumes that $\delta\leq \frac 1 m$, which can be relaxed by adapting this theorem's proof, but with some additional technical complications. This is not a prohibitive assumption since the estimate \eqref{eq:wellseparated} can be used whenever $\delta>\frac 1 m$. 

Compared to \cref{thm:main}, \cref{thm:main2} is easier to employ since it requires less information about $\calX$, but it generally yields a looser bound. This is expected since \cref{thm:main} contains product terms that depend on the pairwise distances between elements, whereas \cref{thm:main2} has effectively replaced all of these distances by $\delta$. Both theorems give similar predictions if all small scales are approximately $\delta$. For sets with many scales between $\delta$ and $\tau$, it is generally advisable to use \cref{thm:main} instead.  

To use \cref{thm:main,thm:main2}, one first needs to select an appropriate $\tau$ for which the $(m,\tau)$ density criteria holds, and as mentioned earlier, there may be infinitely many choices. For certain sets, one could select $\tau$ heuristically based on $m$ and properties of $\calX$. Another option is to sweep through a collection $\calT$ of $\tau$ and pick a $\tau$ that maximizes whichever bound one would like to use. We will discuss the computational cost at the end of this article.

Clumps models for $\calX$ were independently introduced in \cite{li2021stable,batenkov2020conditioning} and were used to control the condition number of tall Fourier matrices. There are some subtle differences between the definitions in these papers, so to facilitate the presentation and to avoid giving two separate definitions, we work with the following boarder definition that encapsulates both frameworks. For sets $\calU,\calV\subset \T$, we define the diameter and distance,
$$
\text{diam}(\calU):=\sup_{u,u'\in \calU} |u-u'|_\T \andspace
\text{dist}(\calU,\calV):=\inf_{u\in\calU,v\in\calV}|u-v|_\T.
$$

\begin{definition}
	\label{def:clumpsdecomp}
	A set $\calX\subset\T$ consists of {\it separated clumps} with parameters $(s,\delta,r,\lambda,\alpha,\beta)$ if the following hold. We have $|\calX|=s$, $\Delta(\calX)\geq \delta$, and there is a disjoint union 
	\begin{equation*}
		\calX = \calC_1\cup \calC_2 \cup \cdots \cup \calC_r,
	\end{equation*}
	where each $\calC_k$ is called a {\it clump} such that
	$$
	\max_{1\leq k\leq r} |\calC_k|= \lambda, \quad \max_{1\leq k\leq r} \text{diam}(\calC_k)\leq \alpha, \andspace \min_{j\not=k} \dist(\calC_j,\calC_k)> \beta>\alpha \quad \text{if $r\geq 2$}.
	$$ 
\end{definition} 

A few remarks are in order. For a fixed $\calX$, the choice of parameters is not unique and it is usually advisable to select valid parameters that minimize $\lambda$. If $r=1$, then $s=\lambda$ and $\beta$ is not a meaningful parameter since there is only a single clump. This is why the clump separation requirement is necessary only when $r\geq 2$. Notice $\beta>\alpha$ is included in the assumption so that distances between clumps exceeds within a clump. We will see that $\lambda$ plays the role of $\max_{1\leq k\leq s} r_k$, where $r_k$ was defined in \cref{thm:main2}. 

There are natural situations where a set consisting of separated clumps also satisfies the requirements of our main results, as shown in the next proposition. 

\begin{corollary}
	\label{prop:mainclumps}
	Let $m,s\in \N_+$ such that $s\geq 2$ and $m\geq 6s$. Suppose $\calX$ consists of separated clumps with parameters $(s,r,\delta,\lambda,\alpha,\beta)$ such that $\delta\leq \frac 1 m$. If $r>1$, also assume that $\beta\geq \frac {3 \lambda} m$. Set $\tau = \frac 1 2$ if $r=1$, otherwise let $\tau=\beta$. Then $\calX$ satisfies the $(m,\tau)$ density criterion, and the conclusions of Theorems \ref{thm:main} and \ref{thm:main2} hold. In particular, we have 
	$$
	\sigma_s(\Phi(m,\calX))
	\geq \frac{\pi}{2 e} \sqrt{\frac{m}{12s}} \( \frac{m \delta}{12 \sqrt 2 e} \)^{\lambda-1} .
	$$
\end{corollary}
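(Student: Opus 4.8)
The plan is to deduce everything from \cref{thm:main2}. The scheme has two parts: first verify that the prescribed $\tau$ makes the $(m,\tau)$ density criterion hold, so that \cref{thm:main,thm:main2} apply verbatim — their remaining hypotheses ($s\ge 2$, $m\ge 6s$, $\Delta(\calX)\ge\delta$, $\delta\in(0,\frac1m]$) are either assumed in the corollary or built into \cref{def:clumpsdecomp}; second, simplify the right-hand side of \eqref{eq:main4} into the claimed closed form. Note there is nothing extra to check for "the conclusions of \cref{thm:main} hold" once the density criterion is in place; the only computation is the displayed bound, which comes from \eqref{eq:main4}.

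Checking the density criterion splits along the definition of $\tau$. If $r=1$ then $\tau=\frac12$, so $\nu(\tau,\calX)=s$ and $\frac{3\nu(\tau,\calX)}{\tau}=6s\le m$. If $r>1$ then $\tau=\beta$, which lies in $(0,\frac12)$ because $\beta$ is strictly below the $\T$-distance between two clumps and that distance is at most $\frac12$; the crucial point is that clump separation controls local sparsity. Indeed, fix $x_k\in\calX$, say $x_k\in\calC_j$: every element of $\calX$ in another clump is at distance more than $\beta$ from $x_k$, while every element of $\calC_j$ is within $\diam(\calC_j)\le\alpha<\beta$ of $x_k$, so $\calB(x_k,\beta,\calX)=\calC_j$ and hence $\nu(\beta,\calX)=\max_{1\le j\le r}|\calC_j|=\lambda$. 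Therefore $\frac{3\nu(\tau,\calX)}{\tau}=\frac{3\lambda}{\beta}\le m$ is exactly the hypothesis $\beta\ge\frac{3\lambda}{m}$.

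To obtain the explicit bound I would estimate the minimand in \eqref{eq:main4} from below, uniformly in $k$. First, $r_k=|\calB(x_k,\tau,\calX)|\le\lambda$ in both cases (it equals $s=\lambda$ when $r=1$, and equals the size of the clump containing $x_k$ when $r>1$). Since $\delta\le\frac1m$ forces $\frac{m\delta}{12e}\le\frac1{12e}<1$ and $r_k-1\in\{0,\dots,\lambda-1\}$, we get $\big(\frac{m\delta}{12e}\big)^{r_k-1}\ge\big(\frac{m\delta}{12e}\big)^{\lambda-1}$, and combined with $r_k\ge1$ this gives $\sqrt{r_k}\,\big(\frac{m\delta}{12e}\big)^{r_k-1}\ge\big(\frac{m\delta}{12e}\big)^{\lambda-1}$. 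Second, $\calG_k\subset\calX$, so by monotonicity of local sparsity $\nu(\tau,\calG_k)\le\nu(\tau,\calX)\le\lambda$ (with $\nu(\tau,\calG_k)=0$ when $r=1$, where $\calG_k=\emptyset$). Feeding these into \eqref{eq:main4} yields $\sigma_s(\Phi)\ge\frac{\pi}{2e}\sqrt{\frac{m}{6s}}\,2^{-\lambda/2}\big(\frac{m\delta}{12e}\big)^{\lambda-1}$, and the proof closes with the identity $\sqrt{\frac{m}{6s}}\,2^{-\lambda/2}=\sqrt2\,\sqrt{\frac{m}{12s}}\,2^{-\lambda/2}=\sqrt{\frac{m}{12s}}\,2^{-(\lambda-1)/2}$, after which $2^{-(\lambda-1)/2}\big(\frac{m\delta}{12e}\big)^{\lambda-1}=\big(\frac{m\delta}{12\sqrt2\,e}\big)^{\lambda-1}$.

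I do not anticipate a genuine obstacle beyond \cref{thm:main2} itself. The two things to get right are: (i) recognizing that the clump-separation hypotheses translate precisely into the density criterion, with the constant $3\lambda/m$ matching $3\nu(\tau,\calX)/\tau\le m$, so that the clump model is just one instance of the general framework; and (ii) the constant bookkeeping, especially noticing that bounding the factor $2^{-\nu(\tau,\calG_k)/2}$ by $2^{-\lambda/2}$ recombines with the prefactor $\sqrt{m/(6s)}$ to produce exactly the $\sqrt2$ in the denominator of the stated bound rather than a weaker power of two.
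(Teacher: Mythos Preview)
Your proposal is correct and follows essentially the same approach as the paper's proof: verify the $(m,\tau)$ density criterion by showing $\nu(\tau,\calX)=\lambda$ in each case, then plug $r_k\le\lambda$ and $\nu(\tau,\calG_k)\le\lambda$ into \eqref{eq:main4}. The paper's version is terser and omits the final algebraic regrouping of $2^{-\lambda/2}$ with the prefactor, which you spell out explicitly and correctly.
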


The condition that $\beta$ scales linearly in $\frac \lambda m$ is the best one can expect without imposing further restrictions on $\calX$. Indeed, if we allow $\beta < \frac \lambda m$, then it may occur that $m<s$. Although \cref{prop:mainclumps} provides the same or worse estimate compared to \cref{thm:main2}, we included the corollary in order to compare with prior results for clumps.

While this paper focuses on the smallest singular value, the techniques developed in this paper provide a straightforward and nontrivial upper bound for the largest singular value. 

\begin{theorem}
	\label{thm:upper}
	Let $m,s\in\N_+$ such that $m\geq s$. For any $\calX\subset\T$ of cardinality $s$ and $\tau\in (0,\frac 1 2]$ such that $m>\frac 1 \tau$, we have 
	$$
	\sigma_1(\Phi(m,\calX))
	\leq \sqrt{\nu(\tau,\calX)\(m+\frac 1 \tau\)}.
	$$
\end{theorem}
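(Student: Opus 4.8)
The plan is to reduce the problem to the well-separated case already recorded in \eqref{eq:wellseparated}, by splitting the columns of $\Phi$ into $\nu(\tau,\calX)$ groups, each corresponding to a subset of $\calX$ whose minimum separation exceeds $\tau$. The $\tau$ local sparsity is precisely the quantity that controls how many such groups are needed.

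First I would build the partition. Form the ``proximity graph'' on the vertex set $\calX$ by joining $x$ to $x'$ whenever $0<|x-x'|_\T\le\tau$. Every vertex $x$ has degree $|\calB(x,\tau,\calX)|-1\le \nu(\tau,\calX)-1$, so this graph has maximum degree at most $\nu(\tau,\calX)-1$. A greedy coloring (process the vertices in any order, assigning to each the least color not used by one of its already-colored neighbors) therefore uses at most $\nu(\tau,\calX)$ colors. Taking the color classes $\calX_1,\dots,\calX_{\nu(\tau,\calX)}$ (some possibly empty), each $\calX_i$ is an independent set, so any two distinct elements of $\calX_i$ are at distance greater than $\tau$; that is, $\Delta(\calX_i)>\tau$ whenever $|\calX_i|\ge 2$.

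Next I would pass to the column blocks. Let $\Phi_i:=\Phi(m,\calX_i)$ be the submatrix of $\Phi$ given by the columns indexed by $\calX_i$. Since the $\calX_i$ partition $\calX$, we have $\Phi\Phi^*=\sum_i \Phi_i\Phi_i^*$, and subadditivity of the operator norm gives
\[
\sigma_1(\Phi)^2=\|\Phi\Phi^*\|\le \sum_i \|\Phi_i\Phi_i^*\|=\sum_i \sigma_1(\Phi_i)^2 .
\]
Then I bound each term. If $|\calX_i|\le 1$, then $\Phi_i$ has at most one column, of squared norm $m$, so $\sigma_1(\Phi_i)^2\le m\le m+\tfrac1\tau$. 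If $|\calX_i|\ge 2$, then $\Delta(\calX_i)>\tau>\tfrac1m$ — this is the only place the hypothesis $m>\tfrac1\tau$ enters — so \eqref{eq:wellseparated} applies to $\Phi(m,\calX_i)$ and yields $\sigma_1(\Phi_i)^2\le m+\tfrac{1}{\Delta(\calX_i)}< m+\tfrac1\tau$. In every case $\sigma_1(\Phi_i)^2\le m+\tfrac1\tau$, and summing over the at most $\nu(\tau,\calX)$ classes gives $\sigma_1(\Phi)^2\le \nu(\tau,\calX)\,(m+\tfrac1\tau)$, which is the bound of \cref{thm:upper} after taking square roots.

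The only real content is the coloring step: the elementary fact ``chromatic number $\le$ maximum degree $+1$'' converts the local sparsity bound into a partition of $\calX$ into $\nu(\tau,\calX)$ pieces each obeying the separation hypothesis of \eqref{eq:wellseparated}, after which operator-norm subadditivity and the reduction to \eqref{eq:wellseparated} are routine. (A self-contained alternative is to bound $\sigma_1(\Phi)^2$ by the maximal absolute row sum of $\Phi^*\Phi=[D_m(x_k-x_\ell)]$ with $D_m(t)=\sum_{j=0}^{m-1}e^{2\pi i jt}$ and $|D_m(t)|\le\min\{m,\tfrac{1}{2|t|_\T}\}$; but splitting that sum at scale $\tau$ produces a spurious logarithmic factor, so I would avoid this route.)
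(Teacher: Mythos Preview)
Your proof is correct and follows essentially the same approach as the paper: partition $\calX$ into $\nu(\tau,\calX)$ subsets each with separation exceeding $\tau$, then apply \eqref{eq:wellseparated} to every piece. The only cosmetic differences are that the paper obtains the partition via its dedicated sparsity decomposition (\cref{prop:decomp}) rather than greedy graph coloring, and assembles the pieces by the triangle inequality on $\|\Phi u\|$ followed by Cauchy--Schwarz on $\sum_k\|u_k\|_2$, whereas you use the identity $\Phi\Phi^*=\sum_i\Phi_i\Phi_i^*$ and operator-norm subadditivity; both routes give the identical bound.
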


For comparison purposes, recall the trivial bound $\sigma_1(\Phi)\leq \|\Phi\|_F= \sqrt{ms}$. Observe that \cref{thm:upper} provides a significantly better upper bound if the $\tau$ local sparsity of $\calX$ is much smaller than $s$. For example, if we were to apply the above theorem for $\tau=\frac 2 m$, then $\sigma_1(\Phi)=\sqrt{3 \nu(\tau,\calX)m/2}$, which is an improvement over the trivial bound if $\nu(\tau,\calX)< \frac {2s} 3$. 

\section*{Organization}

\cref{sec:comparison} provides detailed comparisons with prior work on the condition number of Fourier matrices, and serves as an expanded version of \cref{sec:background}. There, we will see that our main theorems capture the scaling and localization phenomena that are missing from the classical Gautschi-Baz\'an theorem. In the case of clumps, we will see that \cref{prop:mainclumps} is equivalent to the lower bound in \eqref{eq:clumps} modulo universal constants, while holding under tremendously weaker separation assumptions. 

\cref{sec:numerics} is dedicated to examples and numerical simulations, with comparisons to the predictions provided by this paper. It also provides more details regarding the motivational example in \cref{sec:motivation}. There, we provide some extreme examples that illustrate when localization does (not) occur. One on hand, there are examples where $\tau=\frac C m$ like in the clumps model, while in other examples, $\tau=\frac 1 2$ such as for sparse spike trains. They illustrate the effectiveness and flexibility of the main results.  

The remaining portions deal with proofs. \cref{sec:proofstrat} develops the main tool called the polynomial method and introduces two specific trigonometric interpolation problems that are connected to the main theorems. This section also outlines the main strategy for proving the main theorems without technical details. \cref{sec:interpolation} addresses the ``good" and ``bad" interpolation problems, and how the resulting polynomials are related to other interpolation strategies. \cref{sec:proofs} contains proofs of the main results stated in \cref{sec:main}.

\section{Comparison with prior art}

\label{sec:comparison}

\subsection{Comparison to classical estimates}

Classical versus modern papers on Fourier matrices centers on the differences between square versus rectangular. A $s\times s$ Fourier matrix is perfectly conditioned if and only if $\calX$ is some shift of the uniform lattice $\{\frac k s\}_{k=1}^s$, see \cite{berman2007perfect}. It is natural to wonder whether it is possible to relax both sides of this characterization. It would be a delicate task, since \cite{cordova1990vandermonde} established that if $\calX$ consists of the first $s$ terms of the Van Der Corput sequence, then $\kappa(\Phi(s,\calX))=1$ only if $\log_2(s)$ is an integer, but grows like $\sqrt s$ otherwise. This example illustrates that it is possible for $\kappa(\Phi(s,\calX))$ to be unbounded in $s$ even if $\calX$ is ``spread out" in $\T$. Stability of the discrete Fourier transform matrix to perturbations of its nodes via the Kadec-$\frac 14$ theorem were derived in in \cite{yu2023stability}.

The results listed in the previous paragraph illustrate the brittleness of square matrices, while rectangular ones are much more robust. Any $m\times s$ sub-matrix of the $m\times m$ discrete Fourier transform matrix is perfectly conditioned even though the nodes are not uniformly spaced on the circle. More generally, notice from inequality \eqref{eq:wellseparated} that $\Delta(\calX)>\frac 1 m$ implies the conditioning of $\Phi(m,\calX)$ can be bounded uniformly in both $m$ and $s$. It is important to mention that this inequality only applies to rectangular matrices since $\Delta(\calX)>\frac 1 m$ and $|\calX|=s$ imply that $m>s$. These observations should be compared with the ones listed in the previous paragraph for square matrices.

Results for square matrices can be used to deduce bounds for rectangular ones, beyond the trivial relationship $\sigma_s(\Phi(m,\calX))\geq \sigma_s(\Phi(s,\calX))$. We first start with Gautschi \cite[Theorem 1]{gautschi1963inverses} for square matrices, 
$$
\|\Phi(s,\calX)^{-1}\|_\infty
\leq \max_{1\leq k\leq s} \Bigg\{\prod_{j=1,\, j\not=k}^s \frac{2}{|e^{2\pi ix_j}-e^{2\pi i x_k}|} \Bigg\}.
$$
Here, $\|\cdot\|_p$ denotes the $\ell^p\to\ell^p$ operator norm. Next, Baz\'an \cite[Theorem 1]{bazan2000conditioning} showed that whenever $\frac m s \in \N$, then 
$$
\|\Phi(m,\calX)^\dagger\|_2\leq \sqrt{\frac s m} \, \|\Phi(s,\calX)^{-1}\|_2.
$$
Combining the above two inequalities, that $\|A\|_2\leq {\sqrt s}\|A\|_\infty$ if $A\in \C^{s\times s}$, and $\sigma_s(\Phi(m,\calX))\geq \sigma_s(\Phi(\lfloor \frac m s \rfloor s,\calX))$, we obtain the Gautschi-Baz\'an theorem, which was stated in inequality \eqref{eq:classical}.  

Comparing the Gautschi-Baz\'an theorem with \cref{thm:main}, we see that there are two main differences. First, the former does not exhibit {\it localization} since the product in \eqref{eq:classical} is taken over all $x\in \calX\setminus\{x_k\}$, whereas in the latter, the product is over all $x\in\calI_k$, while the further away elements are less significant. Note that if $|x-x_k|_\T$ is small, then $|e^{2\pi ix}-e^{2\pi i x_k}|$ is comparable to $|x-x_k|_\T$. Second, the former does not exhibit the correct {\it scaling} in front of $|e^{2\pi ix}-e^{2\pi i x_k}|$. In the latter, notice that each term has a helpful $\alpha_k$ factor, which could be on the order of $\frac 1 m$ depending on $\calX$. The localization and scaling phenomenon manifest when we consider rectangular Fourier matrices, which are absent for square ones. Examining the proof of \cite[Theorem 1]{bazan2000conditioning}, we see that is treats tall Fourier matrices as $\lfloor \frac m s \rfloor$ independent $s\times s$ blocks, and does not fully exploit the algebraic structure of tall Fourier matrices.

Continuing the remarks made in the previous paragraph, there is an elementary explanation for why tall Fourier matrices should behave differently from square ones. Notice that $(\Phi^* \Phi)_{j,k} =D_m(x_j-x_k),$ where $D_m(t):=\sum_{k=0}^{m-1} e^{2\pi i kt}$ is the Dirichlet kernel. We easily see that $|D_m(t)|$ is on the order of $m$ on the interval $[-\frac 1 m , \frac 1 m]$ and decays at a rate of $|t|^{-1}$ away from $0$. This means that the Gram matrix $\Phi^*\Phi$, for fixed $\calX$ and increasing $m$, becomes increasingly diagonally dominant. Basic and generic tools such as the Gershgorin circle theorem fail to provide any meaningful results when $\Delta(\calX)<\frac 1 m$ and $s\geq 3$ because the diagonal entries of $\Phi^*\Phi$ are $m$ while the $\ell^1$ norm of off-diagonal rows and columns of $\Phi^*\Phi$ exceed $m$. Instead, the proof methods used in this paper specifically take advantage of the algebraic structure of Fourier matrices and are able to obtain finer results. 

\subsection{Comparison to clumps}

In this part, we compare \cref{prop:mainclumps} with the results in \cite{li2021stable,batenkov2020conditioning,kunis2020smallest,batenkov2021single}. As usual, we let $m$ and $s$ denote the number of rows and columns of $\Phi$. We will only compare the general scaling of the model parameters and do not compare universal constants, since the latter can be improved by optimizing their proofs or by providing more accurate but complicated expressions. When comparing our main theorems with other papers, we will generally ignore distinctions between $m-1$, $m$, and $2m$, since the extraneous factors can be absorbed into other constants. 

The result \cite[Theorem 2.7]{li2021stable} shows that if $\calX$ consists of separated clumps with parameters $(s,\delta,r,\lambda,\frac 1 m,\beta)$ such that
\begin{equation}
	\label{eq:clumpsep1}
	m\geq s^2 \andspace \beta \geq \frac{20}{m-1} \sqrt{\frac{s\lambda^5}{(m-1)\delta}} \quad \text{if}\quad r>1,
\end{equation}
then there exist explicit universal constants $C>0$ and $c\in (0,1)$ such that 
$$
\sigma_s(\Phi(m,\calX))
\geq C \sqrt{\frac{m-1}{\lambda}} \big(c(m-1)\delta \big)^{\lambda-1}. 
$$
One main drawback of condition \eqref{eq:clumpsep1} is that $\beta\to\infty$ as $\delta\to 0$, so for sufficiently small $\delta$, the theorem only applies when there is only a single clump. Some improvements to the explicit constants and variations of this inequality can be found in \cite{kunis2020smallest}. All results in this paper also require separation conditions for which $\beta\to\infty$ as $\delta\to0$. 

\cref{prop:mainclumps} shows that under the same hypotheses \eqref{eq:clumpsep1}, this paper's main results are applicable and they yield the same $C\sqrt m (cm \delta)^{\lambda-1}$ estimate with different constants. However, \cref{prop:mainclumps} requires significantly weaker clump separation assumptions and relationship between $m$ versus $s$. Importantly, it removes the artificial behavior that $\beta$ explodes in the limit that $\delta$ goes to zero.

Moving on, \cite[Theorem 2.2]{batenkov2021single} shows that if $\calX$ consists of separated clumps with parameters $(s,\delta,r,\lambda,\alpha,\beta)$ such that 
\begin{equation}
	\label{eq:clumpsep2}
	\frac {c_1(\lambda)s}{\pi \beta} \leq m \leq \frac{c_2(\lambda)}{\pi s\alpha},
\end{equation}
for some $c_1(\lambda),c_2(\lambda)>0$ depending only on $\lambda$, then there is an explicit universal constant $C'>0$ such that
$$
\sigma_s\big(\Phi(2m+1,\calX)\big)
\geq C' \sqrt{m} \, \( \frac{m\delta}{16 e}\)^{\lambda-1}.
$$
Some of the constants in these expression are different than those in \cite{batenkov2021single} since that paper identifies $\T$ with $[-\pi,\pi)$ as opposed to $[0,1)$ in this paper. 

Although $c_1(\lambda)$ and $c_2(\lambda)$ are not given explicitly, \cite[Section 6.3]{batenkov2021single} shows that $c_1(\lambda)> C(1+\lambda^9)$ for some universal $C>0$ and $c_2(\lambda)\leq 2\pi$. Hence, condition \eqref{eq:clumpsep2} requires, for all sufficiently large $\lambda$,
$$
\alpha\leq \frac{2}{sm} \andspace \beta\geq \frac {C(1+\lambda^9)s} m\geq \frac {3\lambda }m.
$$
This establishes that \cref{prop:mainclumps} provides a similar lower bound, but again, under significantly weaker assumptions.

Clumps models were also introduced in \cite{batenkov2020conditioning} to bound the smallest singular value of a ``continuous" analogue, whereby $\Phi$ is replaced with an integral operator. In particular, \cite[Corollary 3.6]{batenkov2020conditioning} assumes that $\calX$ consists of separated clumps and with the additional requirement that $\diam(\calX)\leq \frac{1}{\pi s^2}$. It is not possible rescale this result to avoid this requirement, so we cannot provide a reasonable comparison. Nevertheless, the restriction that $\calX$ is contained in an interval of length $\frac{1}{\pi s^2}$ is removed in a follow-up result \cite{batenkov2021single}, which we already compared to. 

\subsection{Other related work}

The ``colliding nodes" model, where $\calX$ can be decomposed into clumps where each one has exactly two elements, was studied in \cite{kunis2021condition}. This is much more restrictive than the clumps model and can be treated with specialized tools that cannot be extend to more complicated and general sets. 

There is a plethora of papers that examine sub-matrices of the discrete Fourier transform matrix, see \cite{barnett2022exponentially} and references therein. This would correspond to the situation where $\calX\subset\{\frac k n\}_{k=1}^n$ and $n$ is a large parameter that can be selected independent of $m,s$. This setting is more specialized since there are cancellation properties and explicit formulas that are not available in the general case.

\section{Numerical simulations and examples}
\label{sec:numerics}

\subsection{Setup and definitions}

When comparing the true value of $\sigma_s(\Phi)$ and our estimated one, we use our more accurate estimate \eqref{eq:main1} from \cref{thm:main}. The software that reproduces the figures in this paper are publicly available on the author's Github repository \footnote{\href{https://github.com/weilinlimath}{https://github.com/weilinlimath}}, which is also linked to the author's personal website \footnote{\href{https://weilinli.ccny.cuny.edu}{https://weilinli.ccny.cuny.edu}}.

The behavior of Fourier matrices was numerically evaluated in \cite{li2021stable,batenkov2021single,batenkov2020conditioning,kunis2020smallest} under the {\it super-resolution limit}, whereby $m$ is sufficiently large and there is a family of $\{\calX_m\}_{m=1}^\infty$ for which $\Delta(\calX_m)\to 0$, or alternatively, $m\to\infty$ and $\Delta(\calX_m)\to 0$ with some relationship between $m$ and $\Delta(\calX_m)$. This is an important scaling in the theory of super-resolution and the behavior of $\sigma_s(\Phi)$ greatly simplifies in this scenario. The main results of this paper can be used for the super-resolution limit as well and would give equivalent predictions up to implicit constants, see \cref{prop:mainclumps}. One can consider a complementary scaling, called the {\it well-separated case}, whereby $\calX$ is fixed and $m\to\infty$. In this case, \eqref{eq:wellseparated} is applicable.

Rather than look at either scaling again, we look at more challenging examples. In the absence of a large or small parameter, $\sigma_s(\Phi)$ is naturally a discrete quantity. Nonetheless, even though our main theorem is proved using analytic tools, it only requires a weak assumption that $m\geq 6s$, so it is applicable to a greater variety of examples. We are only aware of one other result with this generality, which is the Gautschi-Baz\'an theorem in \eqref{eq:classical}. 

Since we provide lower bounds for the smallest singular value, it makes sense to quantify the quality of approximation by a multiplicative factor. That is, we define the
$$
\text{inaccuracy factor} := \frac{\text{true value}}{\text{estimated value}}.
$$
Of course, this quantity is lower bounded by $1$.   

\subsection{The motivational example revisited}

Here, we provide additional details for the motivational example in \cref{sec:motivation}. First, notice that for a fixed $\calX$, the set of $\tau$ for which the $(m,\tau)$ density criterion is satisfied are nested increasing sets as $m$ increases. More precisely, if we define 
$$
\calS(m,\calX):=\{\tau \colon \calX \text{ satisfies the } (m,\tau) \text{ density criterion}\},
$$
then $\calS(m,\calX)\subset\calS(m+1,\calX)$. So as $m$ increases, we have the option of choosing $\tau$ smaller in order to reduce the number $x\in\calX$ that are close to each $x_k$. However, we do not simply define $\tau$ as the infimum of $\calS(m,\calX)$ because $\alpha_k$ may increase when $\tau$ decreases. Choosing an optimal $\tau$ is beyond the scope of this paper. It is not difficult to select reasonable a candidate based on intuition, or trial and error. 

\begin{figure}[h]
	\centering 
	\includegraphics[width=0.49\textwidth]{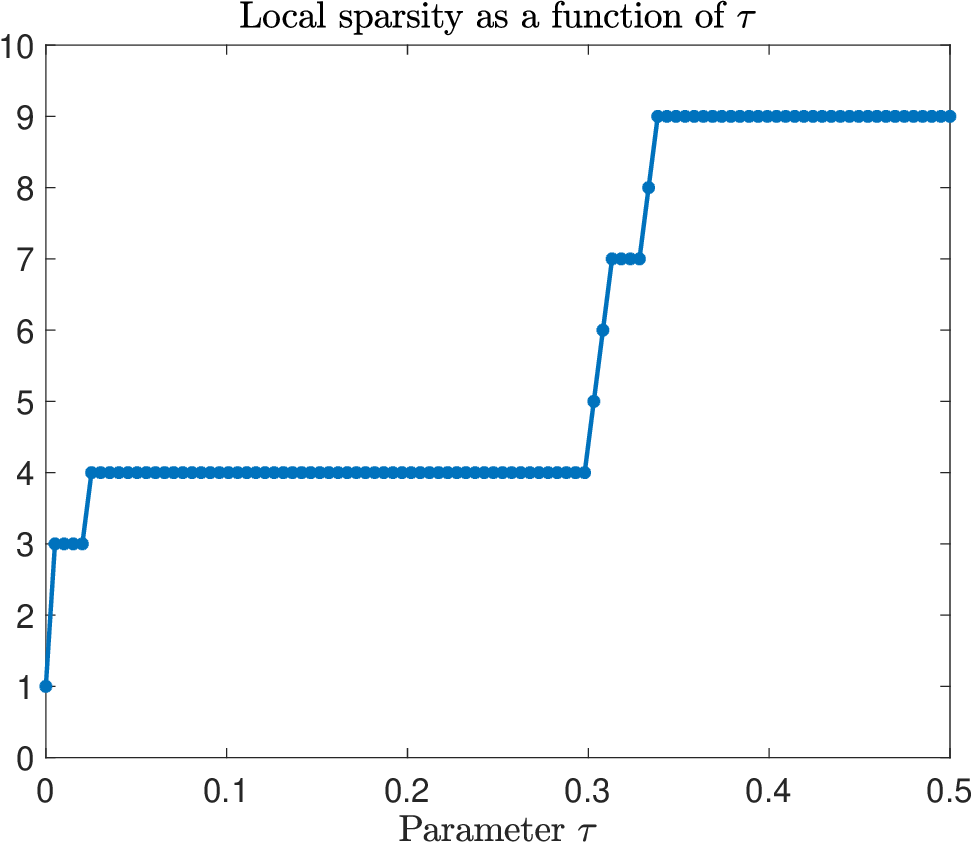}
	\caption{Plot of $\nu(\tau,\calX)$ as a function of $\tau\in (0,\frac 1 2]$. }
	\label{fig:increasingM}
\end{figure}

Returning back to the motivational example, after some calculations, we define 
$$
\tau:=
\begin{cases} 
	%\ \frac 1 {100} &\text{if } m> 600, \\
	\ \frac 2 {30} &\text{if } m\in (450,600], \\
	\ \frac 3 {10} &\text{if } m \in [54,450]. 
\end{cases}	
$$
For these corresponding values of $\tau$, it can be easily checked that $\nu(\tau,\calX)$ is 3 or 4 and that $\calX$ satisfies the $(m,\tau)$ density criterion. To visualize the former, we have plotted $\nu(\tau,\calX)$ as a function of $\tau$ in Figure \ref{fig:increasingM}. Note that our choices for $\tau$ are not optimized, but were chosen according to reasonable heuristics.  

As shown in Figure \ref{fig:motivational}, our theorem yields a significantly more accurate prediction, which becomes more apparent as $m$ increases. This occurs because the effective scale $\tau$ should be chosen to decrease in $m$ and the distance between nearby elements is scaled according to $\alpha_k$, neither of which are captured in the Gautschi-Baz\'an theorem. Additionally, the results in \cite{li2021stable,kunis2020smallest} are not applicable for any $m\in [9,600]\cap\N$, because the separation condition \eqref{eq:clumpsep1} is not fulfilled, while \cite{batenkov2020conditioning} cannot be used since $\calX$ is not contained in an interval of length $\frac 1 {\pi s^2}$, and it is unclear whether \cite{batenkov2021single,batenkov2020conditioning} can be used since they contain implicit constants in their separation criterion. 

\subsection{Another multiscale example} 

Unlike the motivational example in \cref{sec:motivation} where $\calX$ was fixed and $m$ varies, we consider the reverse situation where $m$ is fixed and we have a family of sets $\calX_\epsilon$ parameterized by a $\epsilon\in (0,1]$. Consider the set 
\begin{gather}
	\begin{split}
	\calX_\epsilon&:=\calX_{1,\epsilon} \cup \calX_2(\epsilon) \cup  \calX_{3,\epsilon}, \wherespace\\
	\calX_{1,\epsilon} = \epsilon\{0,\tfrac 1 {90}, \tfrac 2{90}, \tfrac 3{90}\}, \quad
	&\calX_{2,\epsilon} = \tfrac 1 3 + \epsilon \{0,\tfrac 1 {200}, \tfrac 2  {200}\}, \andspace
	\calX_{3,\epsilon} = \tfrac 2 3 + \epsilon \{0, \tfrac 1 {500}\}. 
	\end{split}
	\label{eq:multiplescales}
\end{gather}
We have defined $\calX_\epsilon$ in this way to emphasize that while $\epsilon$ controls the minimum separation since $\Delta(\calX_\epsilon)=\frac{\epsilon}{500}$, the three sets $\calX_1$, $\calX_2$, and $\calX_3$ are still of different scales for each $\epsilon$. 

Since $\Delta(\calX_\epsilon)\leq \frac 1{500}$ for any $\epsilon$, we consider only $m\leq 500$. If we pick $\tau=\frac 3 {10}$, then $\nu(\tau,\calX_\epsilon)=4$ for all $\epsilon$. For two separate experiments, we select $m=400$ and $m=100$. Note that $\calX_\epsilon$ satisfies the $(m,\tau)$ density criterion for all values of $\epsilon$.

\begin{figure}[h]
	\centering
	\includegraphics[width=0.49\textwidth]{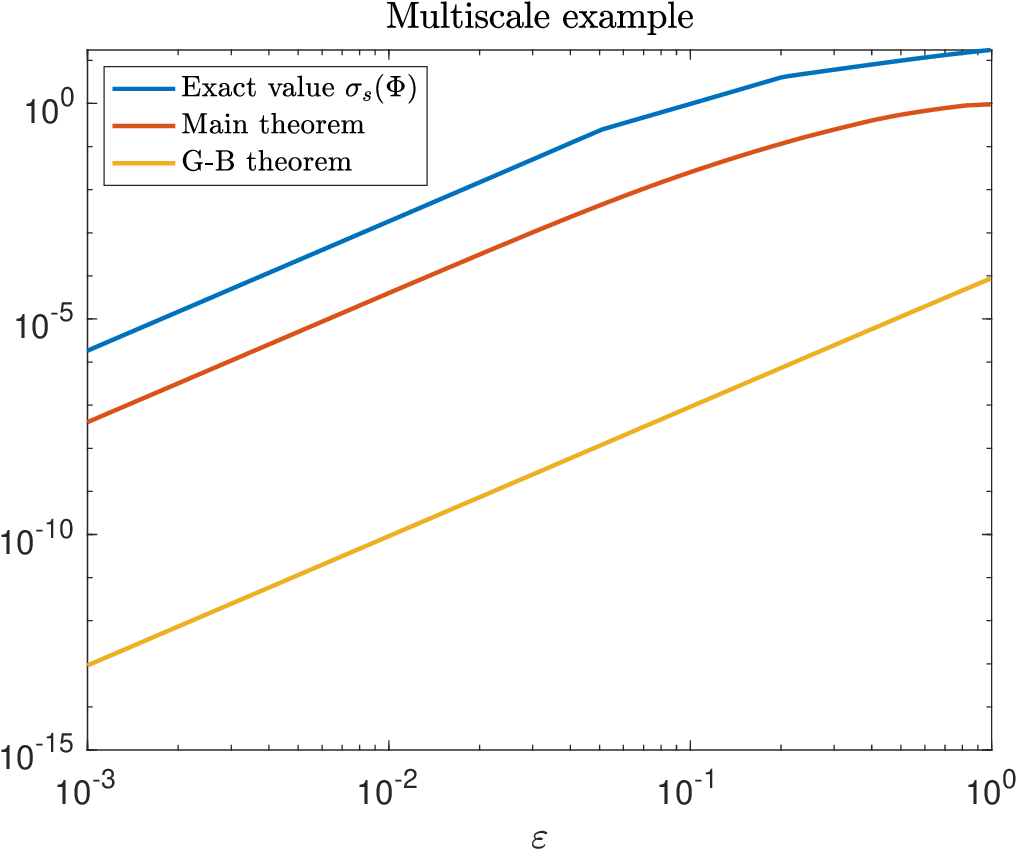}
	\includegraphics[width=0.49\textwidth]{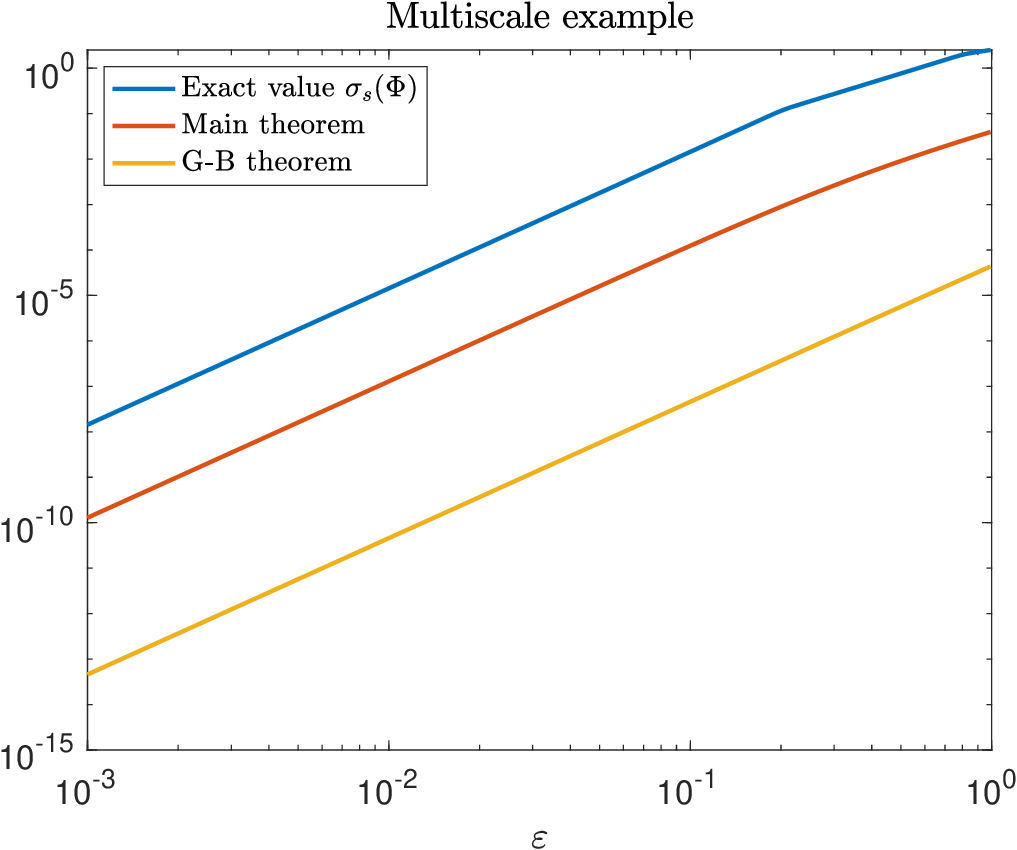}
	\caption{For $\calX$ defined in \eqref{eq:multiplescales}, plot of $\sigma_s(\Phi(m,\calX_\epsilon))$, main theorem, and Gautschi-Baz\'an bound, as a function of $\epsilon$, with $m=400$ on the left and $m=100$ on the right.}
	\label{fig:multclumps}
\end{figure}

The results are shown in Figure \ref{fig:multclumps}. We see from the simulations that our lower bound matches the true behavior of the smallest singular value. Notice that for both experiments, $\sigma_s(\Phi)$ is piece-wise linear, which is expected. Indeed, our theory states that the only significant interactions between $x,x'\in\calX$ are those for which $|x-x'|\leq \tau$. For $\tau=\frac 3 {10}$, the sets $\calX_1$, $\calX_2$, and $\calX_3$ do not have significant interactions due to our choice of $\tau$. They also have cardinality 4, 3, and 2 respectively for all $\epsilon$, and the interactions between elements in each $\calX_k$ scales linearly with $\epsilon$. Hence, according to \cref{thm:main}, we expect 
$$
\sigma_s(\Phi(m,\calX_\epsilon))
\gtrsim c_1 \epsilon^3 + c_2\epsilon^2+c_3\epsilon^1,
$$
for some universal constants that can be explicitly computed. Hence, as $\epsilon$ varies, depending on the regime of $\epsilon$ and the size of $c_1,c_2,c_3$, the dominant term in this inequality changes. In fact, Figure \ref{fig:multclumps} shows that $\sigma_s(\Phi)$ appears to consists of three power-law pieces; on a log-log plot, they have slopes approximately $3.0055$, $2.0321$, and $0.9485$, which is consistent with our prediction.

\subsection{Sparse spike train}

In this example, we consider an extreme situation where $\tau$ cannot be chosen on the order of $\frac 1 m$ even though the number of elements in an interval of length $\frac 2 m$ is at most 3. For any $\epsilon\in (0,1]$ and $s\in [5,30]\cap \N_+$, we set $m=200$ and consider the following set,
\begin{equation}
	\label{eq:spiketrain}
	\calX_{s,\epsilon}= \epsilon \left\{0, \, \frac{1}{m}, \, \dots, \, \frac{s-1}{m}\right\}. 
\end{equation}
Our choices of $m$ and $s$ here are arbitrary and we could have considered larger or smaller $m$ provided that $s$ is sufficiently small compared to $m$. 

\begin{figure}[h]
	\centering
	\begin{subfigure}{0.49\textwidth}
		\includegraphics[width=0.95\textwidth]{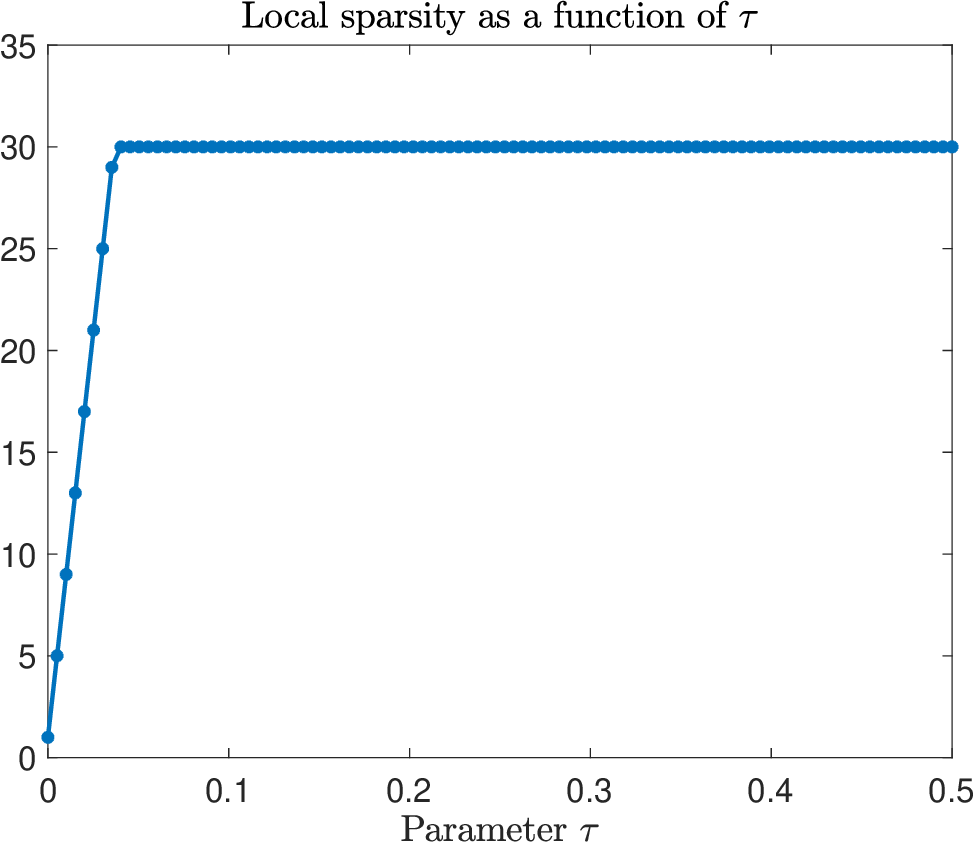}
	\end{subfigure}
	\begin{subfigure}{0.49\textwidth}
		\includegraphics[width=\textwidth]{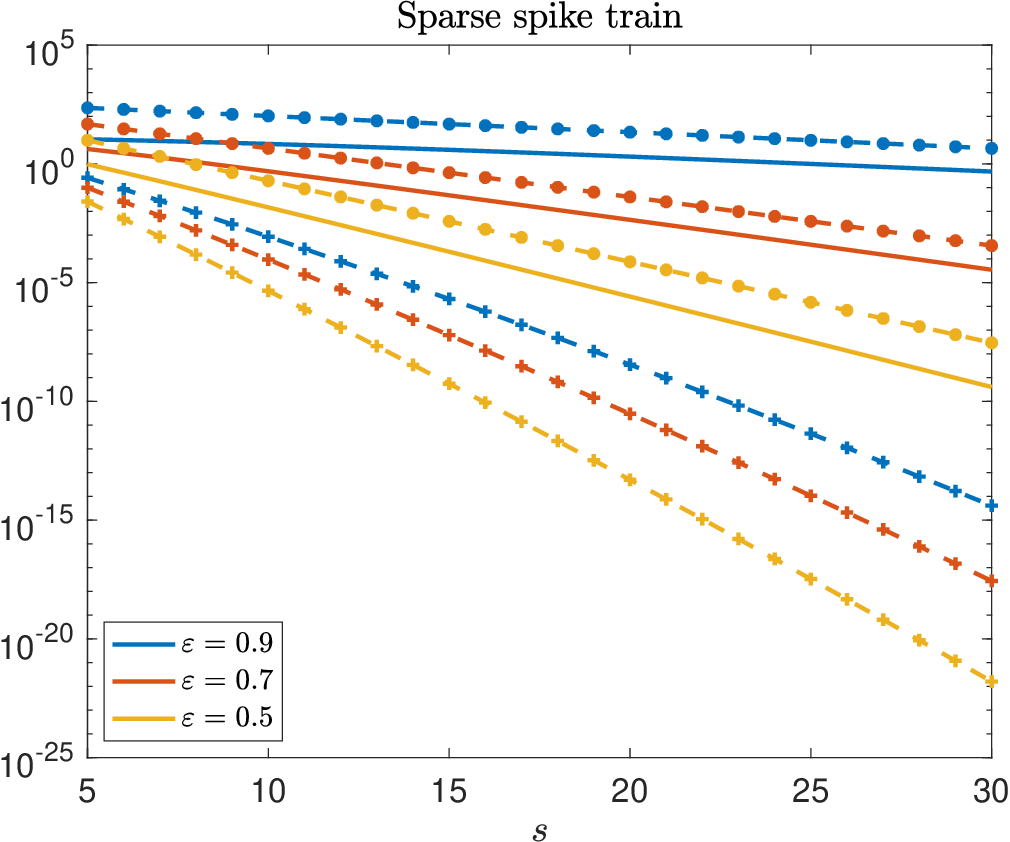}
	\end{subfigure}
	\caption{Left: Plot of the local sparsity $\nu(\tau,\calX_{s,\epsilon})$ as a function of $\tau$ for $\epsilon=\frac 12$. Right: For $\calX_{s,\epsilon}$ defined in \eqref{eq:spiketrain}, $m=200$, and $\epsilon=0.9,0.7,0.5$, the graphs of $\sigma_s(\Phi(m,\calX_{s,\epsilon}))$, upper bound \eqref{eq:barnettupper} with $C=500$, and lower bound \eqref{eq:main1} are shown in solid, dashed circular markers, and dashed plus sign markers, respectively. The slopes of the upper bounds are $-\frac {\pi (1-\epsilon)s}2$. The slopes of the lower bounds are $-1.2729$, $-1.5259$, and $-1.8625$, found by best linear fit.}
	\label{fig:spiketrain}
\end{figure}

It was shown in \cite{moitra2015matrixpencil} that for fixed $\epsilon$, provided that $s\geq C \log m$, then $\sigma_s(\Phi(m,\calX_{s,\epsilon}))\leq C' e^{-c\epsilon s}$ for some unspecified universal $C,C',c>0$. An improved estimate and with explicit $C,C',c$ were derived in \cite{barnett2022exponentially}. Provided that $\frac m \epsilon$ is an integer, \cite[Theorem 10]{barnett2022exponentially} provides an explicit upper bound, which for sufficiently large $\frac m\epsilon$, simplifies to
	\begin{equation}
		\label{eq:barnettupper}
		\sigma_s(\Phi(m,\calX_{s,\epsilon}))
		\leq C e^{- \pi (1-\epsilon) s/2}. 
	\end{equation}
	There is another bound \cite[Lemma 19]{barnett2022exponentially}, which is better for $\epsilon\leq \epsilon_*\approx 0.117$ and takes the form
	\begin{equation*}
		\sigma_s(\Phi(m,\calX_{s,\epsilon}))\leq \frac{2 \sqrt{ms}}{1-(e\pi (m-1)\epsilon /4m)} \bigg(\frac{e \pi(m-1)\epsilon}{4m}\bigg)^{s-1}.
	\end{equation*}
	These upper bounds have important implications. First, they show that even if $\calX$ consists of clumps, they need to be sufficiently far apart for the lower bound in \eqref{eq:clumps} to be valid, otherwise there is a contradiction. However, it does not provide a quantitative bound on the clump separation. Second, they imply that if $\Delta(\calX)<\frac 1 m$, then we need to put some restrictions on $s$ otherwise $\kappa(\Phi)$ may grow exponentially. Indeed, without an upper bound on $s$, we can let $\calX$ be the set in \eqref{eq:spiketrain} with $s$ on the order of $m$. This also explains why we cannot substantially relax the $(m,\tau)$ density criterion. We will provide more details related to the second point below. 

Notice that $\nu(\frac 1 m, \calX_{s,\epsilon})\leq 3$ for all $\epsilon\in (0,\frac 1 2]$, so at first glance, it may be temping to set $\tau$ on the order of $\frac 1 m$. However, it is not hard to see that there is no $\tau<\frac{3s}{m}$ for which $\calX_{s,\epsilon}$ satisfies the $(m,\tau)$ density criterion. On the other hand, if $\tau\geq \frac{3s}{m}$, then $\nu(\tau,\calX_{s,\epsilon})=s$ and consequently, $\calX$ satisfies the $(m,\tau)$ density criterion. The graph of $\nu(\tau,\calX_{s,\epsilon})$ as a function of $\tau$ is shown in Figure \ref{fig:spiketrain}. Thus, we are in the extreme case where we should just pick $\tau=\frac 1 2$. Intuitively, we think of $\calX_{s,\epsilon}$ as a high density set. 

Figure \ref{fig:spiketrain} plots the numerically computed $\sigma_s(\Phi(m,\calX_{s,\epsilon}))$ and our main theorem as functions of $s$ and for $\epsilon=0.9,0.7,0.5$. The numerical simulations indicate that $\log(\sigma_s (\Phi(m,\calX_{s,\epsilon})))$ is well approximated by an affine function of $s$, namely,
$$
\log(\sigma_s(\Phi(m,\calX_{s,\epsilon})))
= b_{\epsilon,m}-c_{\epsilon,m} s + \text{less significant terms depending on $s$}. 
$$
This is consistent with \cref{thm:main}, which predicts that the dominant term in $\log(\sigma_s(\Phi))$ is affine in $s$. To see why, using the notation defined in the theorem, there is a $k$ for which $\calI_k=\calX\setminus\{x_k\}$, so there is at least one term on the right side of \eqref{eq:main1} that contains a product of $s-1$ terms. They are the dominant terms since $\calI_k$ exerts the greatest influence on the lower bound.

If one wants a provable (but worse) result for this example, we recommend \cref{thm:main2}. Following the notation of that theorem, whenever $s\geq 2$, $m\geq 6s$, and $\epsilon \in (0,1]$, we set $\tau=\frac 12$ and $\delta=\frac \epsilon m$. Then for each $k$, we have $\calG_k=\emptyset$, $\nu(\tau,\calG_k)=0$, $n_k=m$, $\calB(x_k,\tau,\calX)=\calX$, and $r_k=s$. Since $m\geq 6s$, we have $\phi(\frac ms)\leq 1+\frac 1 5$. Using \eqref{eq:main3}, we have 
	\begin{equation}
		\label{eq:spiketrain2}
		\sigma_s(\Phi(m,\calX_{s,\epsilon}))
		\geq \frac{\pi}{2e} \sqrt{\frac{m}{\phi(\frac m s)}} \bigg( \frac{\sin(\frac{\pi \epsilon}2)}{2e \phi(\frac m s)}\bigg)^{s-1}
		\geq \frac{\pi}{2e} \sqrt{\frac{5m}{6}} \bigg( \frac{5\sin(\frac{\pi \epsilon}2)}{12e}\bigg)^{s-1}.
	\end{equation}
	The lower bound for $\log(\sigma_s(\Phi))$ provided by \eqref{eq:spiketrain2} has slope $-1.8879$, $-1.9909$ and $-2.2220$ for $\epsilon=0.9,0.7,0.5$ respectively.

\subsection{Colliding clumps}

Here we introduce an example where there are two localized sets that are progressive being pushed towards each other. To make this notion more precise, we fix $m = 100$ and for sufficiently small $\beta>0$, define 
\begin{equation}
	\label{eq:Xbeta}
	\calC_1 := \{0, \tfrac 1{2m}, \tfrac 2 {2m}\}, \quad 
	\calC_2(\beta):=\beta + \tfrac 1 m +\calC_1, \andspace \calX(\beta):=\calC_1\cup \calC_2(\beta).
\end{equation}
As $\beta\to 0$, the two sets $\calC_1$ and $\calC_2(\beta)$ become closer and $\sigma_s(\Phi(m,\calX(\beta)))\to 0$ as $\beta\to 0$. Note we can think of $\calC_1$ and $\calC_2(\beta)$ as clumps with separation $\beta$. Eventually for sufficiently small $\beta$, we should think of $\calX(\beta)$ as just a single clump as opposed to two separate ones. 

\begin{figure}[h]
	\centering 
	\includegraphics[width=0.49\textwidth]{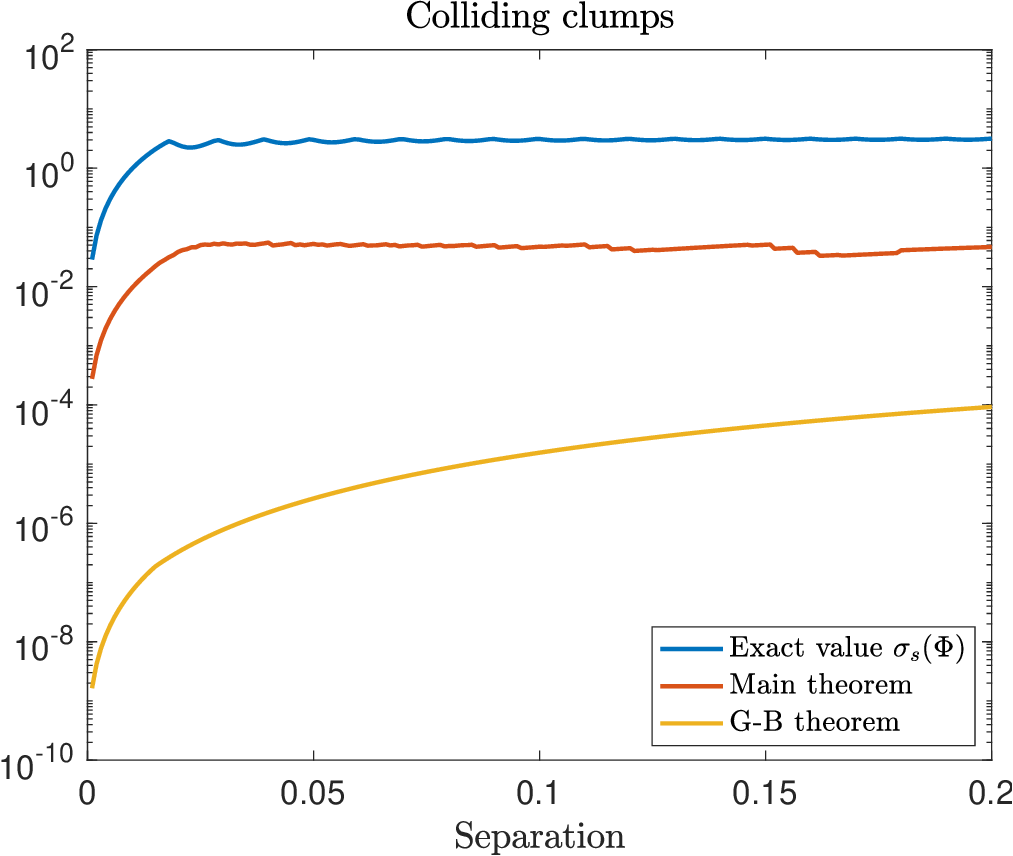}
	\caption{For $m=100$ and $\calX(\beta)$ defined in \eqref{eq:Xbeta}, plot of $\sigma_s(\Phi(m,\calX(\beta)))$, our main theorem, and the Gautschi-Baz\'an bound as a function of $\beta\in [\frac{0.1}{m},\frac{20}{m}]$. }
	\label{fig:collidingclumps}
\end{figure}

To employ \cref{thm:main}, we pick $\tau=\beta$ if $\beta\geq \frac{18}{m}$, otherwise we set $\tau=\frac 1 2$. Our choice of $\tau$ is consistent with \cref{prop:mainclumps}, but we picked a bigger constant (18 instead of 9) in front of $\frac 1 m$ to temper the growth of several implicit constants in our estimates. For this example, the clumps bounds \cite{li2021stable,kunis2020smallest} do not apply since $\calC_1$ and $\calC_2(\beta)$ are too close together. The results of this experiment are shown in Figure \ref{fig:collidingclumps}. This behavior of $\sigma_s(\Phi(m,\calX(\beta)))$ undergoes a phase transition at $\beta\approx \frac 1 m$ since for $\beta\gg \frac 1 m$, it is intuitive that $\calX(\beta)$ should be treated as two clumps instead of just one. There are some fluctuations in the graph of $\sigma_s(\Phi)$ due to number theoretic reasons since $\calX(\beta)$ is a partial unitary matrix if it is a subset of the lattice with spacing $\frac 1 {2m}$. Our estimate is significantly better than the Gautschi-Baz\'an theorem. For example, at $\beta = 0.1=\frac {10}{m}$, the former has an inaccuracy factor of 66.1225, while the latter is 1.9916e+05.

\section{Proof strategy}

\label{sec:proofstrat}

\subsection{The polynomial method}

The torus is defined as $\T:=\R/\Z$, which we normally identify with $[0,1)$ via the map $x\mapsto \text{mod}(x,1)$. The canonical basis vectors for $\R^d$ is denoted $\{e_k\}_{k=1}^d$. We let $\|\cdot\|_p$ and $\|\cdot\|_{L^p}$ denote the $\ell^p$ and $L^p$ norms respectively, for $1\leq p\leq \infty$. The Fourier transform of a $f\in L^2(\T)$ is denoted $\hat f\colon\Z\to\C$, where $\hat f(k):=\int_\T f(x) e^{-2\pi i kx}\, dx$ for each $k\in\Z$. We say $f$ is a trigonometric polynomial of degree $m-1$ if its Fourier transform is supported in $\{0,1,\dots,m-1\}$, and we let $\calP_m$ be the set of all trigonometric polynomials of degree at most $m-1$. 

The primary method that we will use to lower bound $\sigma_s(\Phi)$, or more precisely, upper bound $1/{\sigma_s(\Phi)}$, is through a ``dual" relationship with minimum norm trigonometric interpolation. This duality was introduced in \cite[Proposition 2.12]{li2021stable}: For any integers $m\geq s\geq 1$, finite set $\calX\subset \T$ of cardinality $s$, and unit vector $v\in \C^s$ such that $\|\Phi(m,\calX) v\| = \sigma_s(\Phi(m,\calX))$ (i.e., $v$ is any right singular vector corresponding to the smallest singular value of $\Phi(m,\calX)$), we have
\begin{equation}
	\label{eq:duality}
	\sigma_s(\Phi(m,\calX)) = \max\left\{ \frac{1}{\|f\|_{L^2}}\colon f\in \calP_m \text{ and } f(x_k)=v_k \text{ for each } k \right\}. 
\end{equation}
We refer to this equation as the {\it duality principle}, since it provides a connection between the smallest singular value to minimum norm trigonometric interpolation. There are related concepts \cite{donoho1992superresolution,beurling1989interpolation,beurling1989balayage} for $\R$ instead of $\T$, but one main difference is that our $\calX$ is arbitrary and can be completely nonuniform.  

It may be helpful to explain the intuition behind this duality principle. Suppose $v$ is a unit norm right singular vector corresponding to the smallest singular value of $\Phi(m,\calX)$ and we examine all solutions $u$ to $\Phi^* u= v$. Due to the singular value decomposition, any minimum norm vector $u$ that is consistent with this system will have norm $1/\sigma_s(\Phi)$. Note that $\Phi^*$ is the matrix representation of the linear transform that maps Fourier coefficients of functions in $\calP_m$ to their restriction on $\calX$. Using the Plancherel's theorem allows us to pass from the Fourier coefficients to polynomials. It follows from this discussion that a $f\in \calP_m$ which achieves equality in \eqref{eq:duality} is necessarily a $f$ whose Fourier coefficients are $u/\sigma_s(\Phi)$ where $u$ is any unit norm left singular vector of $\Phi$ which corresponds to $\sigma_s(\Phi)$.

The duality principle provides a natural and constructive avenue for lower bounding $\sigma_s(\Phi)$. However, since we have no exploitable information on the right singular vectors of $\Phi$, we construct interpolants for arbitrary $v$, and then estimate them in  $L^2$ uniformly in $v$. This leads us to the subsequent definition and lemma. 

\begin{definition}
	For any set $\calX=\{x_k\}_{k=1}^s\subset \T$, we say $\{f_k\}_{k=1}^s$ is a family of Lagrange interpolants for $\calX$ if $f_k(x_\ell)=\delta_{k,\ell}$ for each $1\leq k,\ell\leq s$.
\end{definition}

\begin{lemma}
	\label{lem:duality2}
	For any $m,s\in\N_+$ with $m\geq s$ and $\calX\subset\T$ of cardinality $s$, if $\{f_k\}_{k=1}^s\subset \calP_m$ is a family of Lagrange interpolants for $\calX$, then 
	$$
	\frac{1}{\sigma_s^2(\Phi(m,\calX))}
	\leq \sum_{k=1}^s \|f_k\|_{L^2}^2.
	$$
	%	If additionally, $\{f_k\}_{k=1}^s$ are $L^2(\T)$ orthogonal, then 
	%	$$
	%	\frac{1}{\sigma_s^2(\Phi(m,\calX))}
	%	\leq \max_{1\leq k\leq s}\|f_k\|_{L^2}^2.
	%	$$
\end{lemma}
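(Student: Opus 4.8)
The plan is to read off the bound directly from the duality principle \eqref{eq:duality}. Fix a unit-norm right singular vector $v\in\C^s$ of $\Phi(m,\calX)$ corresponding to its smallest singular value, so that $\sigma_s(\Phi(m,\calX))$ equals the maximum of $1/\|f\|_{L^2}$ over all $f\in\calP_m$ satisfying $f(x_k)=v_k$ for every $k$. Rather than search for the optimizer, I would simply produce one explicit admissible competitor and control its $L^2$ norm. The family of Lagrange interpolants supplies the natural candidate
\[
f:=\sum_{k=1}^s v_k\,f_k.
\]

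First I would verify admissibility. Since each $f_k$ lies in the linear space $\calP_m$, so does $f$; and evaluating at any node $x_\ell$ and using $f_k(x_\ell)=\delta_{k,\ell}$ gives $f(x_\ell)=\sum_{k=1}^s v_k\delta_{k,\ell}=v_\ell$, so $f$ matches the prescribed data. Next I would estimate its norm using the triangle inequality in $L^2(\T)$ and then the Cauchy--Schwarz inequality in $\C^s$, together with $\|v\|_2=1$:
\[
\|f\|_{L^2}\le\sum_{k=1}^s|v_k|\,\|f_k\|_{L^2}\le\Big(\sum_{k=1}^s|v_k|^2\Big)^{1/2}\Big(\sum_{k=1}^s\|f_k\|_{L^2}^2\Big)^{1/2}=\Big(\sum_{k=1}^s\|f_k\|_{L^2}^2\Big)^{1/2}.
\]

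To conclude, I would invoke \eqref{eq:duality}: because $f$ is one admissible interpolant and $\sigma_s(\Phi(m,\calX))$ is the \emph{maximum} of $1/\|g\|_{L^2}$ over such interpolants $g$, we have $\sigma_s(\Phi(m,\calX))\ge 1/\|f\|_{L^2}$, hence $1/\sigma_s^2(\Phi(m,\calX))\le\|f\|_{L^2}^2\le\sum_{k=1}^s\|f_k\|_{L^2}^2$. There is no genuine obstacle here beyond bookkeeping; the only points deserving care are keeping the orientation of \eqref{eq:duality} straight (a single test function yields a \emph{lower} bound on $\sigma_s$, equivalently an \emph{upper} bound on $1/\sigma_s^2$) and noting that linearity of $\calP_m$ is what legitimizes the candidate $f$. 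As an alternative that sidesteps \eqref{eq:duality}, one can argue purely in linear algebra: writing each $f_k$ in its Fourier coefficients exhibits a right inverse $A$ of $\Phi(m,\calX)^\ast$ with $\Phi(m,\calX)^\ast A=I_s$, and then $1/\sigma_s^2(\Phi(m,\calX))=\|(\Phi(m,\calX)^\ast)^\dagger\|_2^2\le\|A\|_2^2\le\|A\|_F^2=\sum_{k=1}^s\|f_k\|_{L^2}^2$ by Plancherel; but the duality route is cleaner.
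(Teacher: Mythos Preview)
Your proof is correct and essentially identical to the paper's: both pick a unit right singular vector $v$ for $\sigma_s(\Phi)$, form the admissible interpolant $f=\sum_k v_k f_k$, and bound $\|f\|_{L^2}$ via the triangle inequality followed by Cauchy--Schwarz before invoking the duality principle \eqref{eq:duality}. Your additional remarks on admissibility and the alternative right-inverse argument are fine but not needed.
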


\begin{proof}
	Let $v\in\C^s$ be any unit norm vector such that $\|\Phi v\| = \sigma_s(\Phi)$. Since $f=\sum_{k=1}^s v_k f_k$ interpolates $v$ on $\calX$, by equation \eqref{eq:duality} and Cauchy-Schwarz, we have
	\begin{equation*}
		\frac{1}{\sigma_s(\Phi)}
		\leq \|f\|_{L^2}
		\leq \sum_{k=1}^s |v_k| \|f_k\|_{L^2}
		\leq \bigg(\sum_{k=1}^s \|f_k\|_{L^2}^2\bigg)^{1/2} \bigg(\sum_{k=1}^s |v_k|^2\bigg)^{1/2}
		=\bigg(\sum_{k=1}^s \|f_k\|_{L^2}^2\bigg)^{1/2} .
	\end{equation*}
\end{proof}

This lemma was implicitly used in \cite{li2021stable}, and allows us to reduce the problem of lower bounding $\sigma_s(\Phi)$ into constructing Lagrange interpolants. One strength of this method is that it does not require any information about the singular vectors of $\Phi$, which is usually more difficult to analyze than the singular values. However, if we had additional information about them, such as localization properties, then the $L^2$ estimate provided here can be improved. 

The next proposition serves as a converse to \cref{lem:duality2}. It shows that any lower bound on the smallest singular value provides the existence of polynomials with prescribed interpolation properties. 

\begin{proposition}
	\label{prop:interpolation2}
	If $\calX\subset\T$ is a non-empty finite set with cardinality $s$, then for any $w\in\C^s$ and integer $m\geq s$, there exits $f\in \calP_m$ such that $f|_\calX=w$, 
	$$\|f\|_{L^2}\leq \frac{\|w\|_2}{\sigma_s(\Phi(m,\calX))} \andspace \|f\|_{L^\infty}\leq \frac{ \sqrt{m} \, \|w\|_2}{\sigma_s(\Phi(m,\calX))}.$$ 
\end{proposition}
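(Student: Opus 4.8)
The plan is to deduce Proposition~\ref{prop:interpolation2} directly from the singular value decomposition of $\Phi:=\Phi(m,\calX)$, essentially reversing the intuition sketched after the duality principle \eqref{eq:duality}. First I would write $\Phi = U\Sigma V^*$ with $U\in\C^{m\times s}$ having orthonormal columns, $V\in\C^{s\times s}$ unitary, and $\Sigma = \mathrm{diag}(\sigma_1,\dots,\sigma_s)$; since $m\geq s$ and $\Phi$ is a Vandermonde matrix with distinct nodes, it has full column rank $s$, so $\sigma_s = \sigma_s(\Phi)>0$ and $\Sigma$ is invertible. The key observation is that $\Phi^*$ is the matrix that sends the coefficient vector $\hat f = (\hat f(0),\dots,\hat f(m-1))^\top$ of any $f\in\calP_m$ to the sample vector $(f(x_1),\dots,f(x_s))^\top = f|_\calX$; this is immediate from $f(x_k) = \sum_{j=0}^{m-1}\hat f(j) e^{2\pi i j x_k}$ and the definition of $\Phi$.

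The construction itself is then explicit: given $w\in\C^s$, set the coefficient vector $u := \Phi(\Phi^*\Phi)^{-1}w \in \C^m$, i.e.\ $u = (\Phi^*)^\dagger w$ is the minimum-norm solution of $\Phi^* u = w$, and let $f\in\calP_m$ be the trigonometric polynomial with $\hat f = u$. By construction $f|_\calX = \Phi^* u = w$. For the $L^2$ bound, Parseval gives $\|f\|_{L^2} = \|u\|_2$, and writing $(\Phi^*)^\dagger = U\Sigma^{-1}V^*$ (using the SVD above, since $\Phi^* = V\Sigma U^*$ has pseudoinverse $U\Sigma^{-1}V^*$) yields $\|u\|_2 = \|U\Sigma^{-1}V^* w\|_2 = \|\Sigma^{-1}V^* w\|_2 \leq \sigma_s^{-1}\|V^* w\|_2 = \sigma_s^{-1}\|w\|_2$, where the middle equality uses that $U$ has orthonormal columns and the last uses that $V$ is unitary. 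For the $L^\infty$ bound, I would use that for any $g\in\calP_m$ one has the pointwise estimate $|g(x)| = |\sum_{j=0}^{m-1}\hat g(j) e^{2\pi i j x}| \leq \sqrt{m}\,\|\hat g\|_2 = \sqrt m\,\|g\|_{L^2}$ by Cauchy--Schwarz on the $m$ nonzero coefficients; applying this to $f$ and using the $L^2$ bound just proved gives $\|f\|_{L^\infty}\leq \sqrt m\,\|f\|_{L^2}\leq \sqrt m\,\|w\|_2/\sigma_s$.

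An alternative, slightly slicker route avoids introducing coordinates on the polynomial side: one can simply invoke the duality principle machinery already set up, namely that minimizing $\|f\|_{L^2}$ over $f\in\calP_m$ with $f|_\calX = w$ is exactly the least-norm problem $\min\{\|u\|_2 : \Phi^* u = w\}$ after Parseval, whose value is $\|(\Phi^*)^\dagger w\|_2 \leq \|w\|_2/\sigma_s(\Phi)$; this is the same computation phrased at a higher level, and \eqref{eq:duality} is the special case $\|w\|_2 = 1$ with $w$ a bottom right singular vector. I do not expect a genuine obstacle here: the statement is essentially a restatement of the SVD, and the only points requiring a word of care are (i) confirming $\sigma_s(\Phi)>0$ so the division is legitimate, which follows from the full-rank Vandermonde structure noted in the introduction, and (ii) being careful that $f|_\calX = \Phi^* u$ rather than $\Phi u$, i.e.\ keeping track of which matrix ($\Phi$ vs.\ $\Phi^*$) implements sampling versus synthesis. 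The mild bookkeeping of whether one wants $\Phi$ or $\Phi^\top$ or $\Phi^*$ (all have the same singular values) is the only place to slip, so I would fix conventions once at the start and proceed.
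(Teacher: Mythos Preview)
Your proposal is correct and is essentially the same argument as the paper's: both construct $f$ via the minimum-norm solution $(\Phi^*)^\dagger w$ using the SVD of $\Phi$, then bound $\|f\|_{L^2}$ by $\|w\|_2/\sigma_s$ via orthonormality and obtain the $L^\infty$ bound by Cauchy--Schwarz on the $m$ Fourier coefficients. The only cosmetic difference is that the paper expands $w$ in the right singular basis and builds $f$ as $\sum_k b_k f_k$ with $\hat f_k = u_k/\sigma_k$, whereas you package this as a single pseudoinverse; these are the same computation.
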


\begin{proof}
	Note that $m\geq s$ implies $\Phi:=\Phi(m,\calX)$ is injective due to the Vandermonde determinant theorem. We have the singular value decomposition $\Phi =\sum_{k=1}^s \sigma_k u_k v_k^*$, where the $v_k$'s and $u_k$'s are orthonormal and the $\sigma_k$'s are the nonzero singular values of $\Phi$. 
	
	For any $w\in\C^s$, we have $w=\sum_{k=1}^s b_k v_k$ for some $b\in \C^s$ such that $\|b\|_2=\|w\|_2$. For each $k$, we define $f_k\in \calP_m$ such that $\hat{f_k}=u_k/\sigma_k$. Using again that $\Phi^*$ is the matrix representation of the operator that maps the Fourier coefficients of a function in $\calP_m$ to its values on $\calX$, a direct calculation then yields that $f_k|_\calX=\Phi^* \hat{f_k}=v_k$.
	
	From here we see that $f:=\sum_{k=1}^s b_k f_k$ satisfies $f|_{\calX}=w$. Moreover, since the $u_k$'s are orthonormal, an application of Parseval's shows that the $f_k$'s are $L^2$ orthogonal, and so  
	$$
	\|f\|_{L^2}^2
	= \sum_{k=1}^s |b_k|^2 \|f_k\|_{L^2}^2
	=\sum_{k=1}^s  \frac{|b_k|^2}{\sigma_k^2}
	\leq \frac{\|w\|_2^2}{\sigma_s^2}.
	$$
	For the $L^\infty$ bound, we use that $f\in \calP_m$, and Cauchy-Schwarz, to get 
	$$
	\|f\|_{L^\infty}
	\leq \|\hat f\|_{\ell^1}
	\leq \sqrt m \, \|\hat f\|_{\ell^2}
	=\sqrt m \, \|f\|_{L^2}.
	$$
\end{proof}

This proposition can be rephrased as a result for an interpolation operator. Consider the operator $T(\calX)$ that takes $f^*\in C(\T)$ and produces the $f\in \calP_m$ guaranteed by this proposition such that $f^*=f$ on $\calX$. Equipping both $C(\T)$ and $\calP_m$ with the $L^\infty$ norm yields the estimate
$$
\|T(\calX)\|_{L^\infty\to L^\infty}
\leq \frac{ \sqrt{ms}}{\sigma_s(\Phi(m,\calX))}.
$$

We loosely refer to the strategy provided by the results of this subsection as the {\it polynomial method}. A primary usefulness of this connection between $\sigma_s(\Phi)$ and trigonometric interpolation is that it can be used employ tools from Fourier analysis and polynomial approximation, instead of solely working with matrices. While this connection is helpful, it is only useful if one can construct Lagrange interpolants with small norm, otherwise the resulting lower bounds for $\sigma_s(\Phi)$ would be quite loose. 

\subsection{Outline of the main proofs from an abstract perspective}

The proofs of Theorems \ref{thm:main} and \ref{thm:main2} are based on the following general recipe. Due to the polynomial method, we only need to provide the existence of Lagrange interpolants with suitably small norms and degree at most $m-1$. Note that $\calP_m$ enjoys numerous algebraic properties. In addition to being vector space, if $f\in \calP_m$ and $g\in \calP_n$, then $fg\in \calP_{m+n-1}$. It is also a shift invariant space, namely, $f\in \calP_m$ if and only if $f(\cdot-t)\in\calP_m$ for any $t\in\T$. 

We start the proof by fixing any $x_k\in\calX$ and concentrate on establishing a polynomial $f_k\in\calP_m$ such that $f_k(x_k)=1$ and vanishes on $\calX\setminus\{x_k\}$. Constructing a Lagrange interpolant of this data is straightforward, but doing so in a naive manner leads to loose estimates. We use the standard Lagrange interpolant $\ell_k\in\calP_s$ as a benchmark. Note that it has a pointwise upper bound,
\begin{equation}
	\label{eq:lagrangebound}
	\|\ell_k\|_{L^\infty}
	\leq \prod_{j\not=k} \frac{2}{|e^{2\pi i x_k}-e^{2\pi i x_j}|}
	= 2^{s-1} \prod_{j\not=k} \frac{1}{|e^{2\pi i x_k}-e^{2\pi i x_j}|}.
\end{equation}
The right hand side grows exponentially in $s$ and it contains a product of $s-1$ terms. It is significantly larger compared to the norms of polynomials that we will construct later. The main deficiency of $\ell_k$ is that $\deg(\ell_k)=s-1$, so it does not take advantage of the possibility that interpolants can be selected from $\calP_m$ where $m$ can be significantly larger than $s$. From this point of view, we interpret $m$ as the number of parameters or degrees of freedom, and $s$ as the number of constraints. 

Addition of two polynomials results in polynomial whose degree is the max, while multiplication adds their degrees. It is intuitive that points in $\calX$ near $x_k$ require larger norm polynomials to interpolate, since we need $f_k(x_k)=1$, yet $f_k$ can potentially have many nearby zeros, whereas further away points require smaller norms. Hence, it is natural to decompose
$$
\calX
=\calB_k\cup \calG_k,
$$
where the ``bad" $\calB_k$ and ``good" $\calG_k$ sets contain the points near and far away from $x_k$, respectively. The scale $\tau$ at which these sets are selected is important and determined by the density criterion. Hence the original interpolation problem can be solved by finding and multiplying Lagrange interpolants $b_k$ and $g_k$ where $b_k(x_k)=g_k(x_k)$, $b_k$ vanishes on $\calB_k\setminus\{x_k\}$, and $g_k$ vanishes on $\calG_k$. 

The interpolation problem for the good set will be handled in \cref{sec:good}. Although each element of $\calG_k$ is sufficiently far away from $x_k$, points in $\calG_k$ can still be close together. Hence, it is not clear that there is even any advantage of splitting $\calX$ into the good and bad sets. To deal with this, we will employ \cref{prop:decomp} to further decompose $\calG_k$ as
$$
\calG_k=\calG_{k,1}\cup \dots \cup \calG_{k,\nu_k}, \wherespace \nu_k:=\nu(\tau,\calG_k),
$$
such that $\Delta(\calG_{k,j})$ is suitably controlled from below. By using \cref{prop:interpolation2}, we can recast inequality \eqref{eq:wellseparated} as an interpolation statement. Doing so, we obtain the existence of $\nu_k$ many interpolants, which are multiplied together to obtain a desired $g_k$. Interpolation for the good set will require a budget of roughly ${2\nu(\tau,\calG_k)}/\tau$, which is guaranteed to be at most ${2m}/3$ in view of the density criterion.  

The interpolation problem for the bad set will be handled in \cref{sec:bad}. The starting point is a basic observation that the standard Lagrange interpolant for the bad set can be pointwise bounded by the distances between elements of $\calB_k$ and $x_k$ as seen in \eqref{eq:lagrangebound}. Note that if $q|t|_\T\leq \frac 1 2$ for some $q\in\N_+$, then $|qt|_\T=q|t|_\T$. Hence, if we shift and dilate the elements of $\calB_k$, and use a Lagrange interpolant for the dilated points, such as
$$
b_k(x)=\prod_{x_j\in\calB_k\setminus \{x_k\}} \frac{e^{2\pi iq_jx}-e^{2\pi i q_jx_j}}{e^{2\pi i q_jx_k}-e^{2\pi i q_jx_j}},
$$
then this polynomial will have significantly smaller norm and larger degree compared to the standard Lagrange interpolant $\ell_k$. Here, each $q_j$ will need to be chosen so that the degree of $b_k$ is not too large. Interpolation for the bad set will use the remaining portion of our budget consisting of roughly $m-1-{2\nu(\tau,\calG_k)}/\tau$. 

Finally, the desired Lagrange interpolant is $f_k:=b_kg_k$. Doing this for each $x_k\in\calX$ yields a family of Lagrange interpolants for $\calX$ in $\calP_m$, allowing us to employ \cref{lem:duality2}, which completes the proof. 

Carrying out these steps requires exploiting the advantages of several seemingly disparate approaches. The polynomial method for estimating the smallest singular value of Fourier matrices was introduced in \cite{li2021stable} and was inspired by interpolation techniques \cite{donoho1992superresolution,beurling1989interpolation}. It is further refined in this paper to handle more abstract sets, beyond clumps and subsets of lattices, by incorporating density ideas. Although we were unable to find a prior reference that uses exactly the same density criterion, there are strong connections between sampling and density, such as \cite{landau1967necessary}. 

The initial decomposition of $\calX$ into $\calB_k\cup \calG_k$ and further decompositions of $\calG_k$ into $\calG_{k,1},\dots,\calG_{k,\nu_k}$, are inspired by the classical Calder\'on-Zygmund decomposition. Our method for dealing with the good set requires the lower bound in \eqref{eq:wellseparated}, which was proved in \cite{aubel2019vandermonde} by using powerful machinery developed for analytic number theory \cite{vaaler1985some,selberg1989collected,montgomery1973large,montgomery1978analytic}. Finally, the method for dealing with the bad set using local dilation methods was originally employed in \cite{li2021stable}, for which we make significant improvements to.

\section{Two trigonometric interpolation problems} 
\label{sec:interpolation}

\subsection{Small norm Lagrange interpolants}

\label{sec:good}

In this section, we study the interpolation problem for the ``good" set $\calG\subset\T$ where all elements of $\calG$ are away from zero, and we would like to find a trigonometric polynomial that vanishes on $\calG$ and equals 1 at 0. Since we do not want to place any assumptions on $\Delta(\calG)$, which we allow to be arbitrarily small, this is a delicate problem. We will construct a polynomial that is significantly better behaved than the standard Lagrange interpolant. 

A key observation is the following {\it sparsity decomposition} which essentially states that a set can be decomposed into disjoint sets, each with unit local sparsity and minimum separation that is well-controlled. The key is that the number of sets equals the local sparsity of the original set, and not the cardinality. While this decomposition is intuitive, some care is taken with the proof due to the periodic boundary conditions that are imposed on us due to working with the torus. 

\begin{proposition}
	\label{prop:decomp}
	For any $\tau \in (0,\frac 1 2]$ and non-empty $\calW\subset\T$, letting $\nu:=\nu(\tau,\calW)$, there exist non-empty disjoint subsets $\calW_1,\calW_2,\dots,\calW_\nu\subset\calW$ such that their union is $\calW$ and $\Delta(\calW_k)>\tau$ for each $k$.
\end{proposition}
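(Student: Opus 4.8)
The plan is to give a greedy/coloring argument that is really a statement about interval graphs on the circle, adapted to the torus. Think of assigning to each point $w\in\calW$ a ``color'' in $\{1,2,\dots,\nu\}$ so that two points receiving the same color are more than $\tau$ apart; then $\calW_k$ is the set of points with color $k$. The natural order to process the points is along the circle, so first I would enumerate $\calW=\{w_1,w_2,\dots,w_n\}$ with $0\le w_1<w_2<\cdots<w_n<1$, and think of the cyclic distances between consecutive points.

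The key local observation is that for any point $w\in\calW$, its closed $\tau$-neighborhood $\{w'\in\calW:|w-w'|_\T\le\tau\}$ contains at most $\nu$ points, \emph{including $w$ itself}; hence among the points strictly within $\tau$ of $w$ on either side there are at most $\nu-1$ of them. If we process points $w_1,w_2,\dots$ in cyclic order and, when we reach $w_j$, we look only at the points already colored that lie within $\tau$ of $w_j$, a naive count says there are at most $\nu-1$ such ``conflicts,'' so a free color among $\{1,\dots,\nu\}$ always exists. This is the standard greedy interval-coloring bound (chromatic number $\le$ clique number for interval graphs). The subtlety the proposition warns about is the \emph{wrap-around}: when we get near the end of the list, the points $w_n, w_{n-1},\dots$ can conflict with the already-colored points $w_1,w_2,\dots$ near $0$, and a point can conflict with colored points on \emph{both} sides simultaneously, potentially exceeding $\nu-1$ conflicts if one is not careful about where to ``cut'' the circle.

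The way I would handle this is to cut the circle at a well-chosen gap. If $\calW$ is not all of a single $\tau$-ball (the easy case can be dispatched separately, or folded in), I claim one can pick a consecutive pair $w_i, w_{i+1}$ (indices mod $n$) whose cyclic gap is large enough that no point within $\tau$ of $w_{i+1}$ lies ``before'' $w_i$ — more precisely, relabel so that we start the linear order at such a point. Concretely: among all points, pick $w$ maximizing... hmm, actually the cleanest route is: let $g$ be the largest gap between cyclically consecutive points; if $g>\tau$ this gap already ``insulates'' the two ends, so after cutting there, the plain linear greedy argument applies with no wrap-around conflicts at all, giving $\nu$ colors. If instead \emph{every} gap is $\le\tau$, then $\calW$ is ``dense'' around the whole circle, and in that case I would argue directly that $\nu=n$ is forced, or more carefully that the whole circle being covered by $\tau$-steps forces the local sparsity to be large enough to absorb the wrap-around (one extra conflict at the seam is compensated because the seam point's two-sided neighborhood still has $\le\nu$ points total, $\le\nu-1$ excluding itself, split across the seam). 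I expect the main obstacle to be exactly this bookkeeping at the seam: making rigorous that the at-most-$\nu-1$ colored conflicts bound survives when conflicts can come from both directions around the torus, and verifying each $\calW_k$ is non-empty (which follows because the greedy algorithm only introduces color $k$ when colors $1,\dots,k-1$ are all blocked, so color $k$ used implies a $\tau$-neighborhood with $\ge k$ points, hence if color $\nu$ is needed somewhere, all of $1,\dots,\nu$ get used; and if some color in $1,\dots,\nu$ is never used one simply re-indexes — but the cleanest fix is to note that by definition of $\nu$ some point has exactly $\nu$ points in its $\tau$-ball, forcing all $\nu$ colors there). I would also double-check the edge case $\tau=\tfrac12$, where ``$>\tau$'' separation means each $\calW_k$ has at most one point, consistent with $\nu=|\calW|$.
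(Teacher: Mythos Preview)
Your greedy-coloring argument is correct and is essentially the paper's proof. The wrap-around concern is a red herring, though: the local-sparsity bound $|\{w'\in\calW:|w_j-w'|_\T\le\tau\}|\le\nu$ already counts neighbors on \emph{both} sides of $w_j$ simultaneously, so at step $j$ there are at most $\nu-1$ already-colored conflicts no matter where they sit on the circle, and the gap-cutting case analysis can be dropped entirely---the paper simply invokes $|\calU_\ell|\le\nu$ and moves on.
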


\begin{proof}
	Since the statement we are proving is invariant under periodic shifts, we can assume that $w_1:=0\in \calW$. We sort the elements of $\calW$ by $w_1,w_2,\dots,w_n$ sorted counterclockwise and provide a greedy method for generating the desired sets
	$
	\calW_1,\calW_2,\dots,\calW_\nu.
	$
	We initialize these sets to be empty and we add $w_\ell\in \calW$ to one of these sets until all elements of $\calW$ have been exhausted. We say $w_\ell$ has been {\it assigned} if it has been placed in a $\calW_k$, and {\it unassigned} otherwise. We start by placing $w_1\in \calW_1$. For each unassigned $w_\ell\in \calW$, we consider the set of $\calU_\ell:=\calW \cap [w_\ell-\tau,w_\ell+\tau]$ and place $w_\ell$ in an arbitrary $\calW_k$ that does not contain any assigned elements in $\calU_\ell$. This is always possible since $|\calU_\ell|\leq \nu$ for all $\ell$. By construction, $\calW_1,\dots,\calW_\nu$ are disjoint and  $\Delta(\calW_k)>\tau$. To see why $\calW_1,\calW_2,\dots,\calW_\nu$ are each nonempty, by definition of the $\tau$ density, there is a $\ell$ such that $[w_\ell-\tau,w_\ell+\tau]$ contains exactly $\nu$ elements of $\calW$ and they are necessarily placed in different $\calW_k$'s. 
\end{proof}

There is a stark conceptual distinction between the clumps decomposition in \cref{def:clumpsdecomp}, which groups the elements of $\calX$ by their spatial locations, versus \cref{prop:decomp}, which decomposes $\calX$ into disjoint subsets that each have unit local sparsity. An example is shown in Figure \ref{fig:clumpsvssparsity}. 

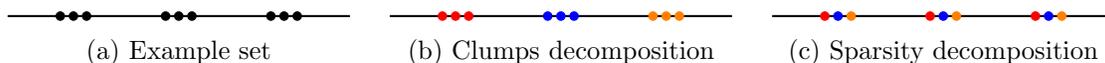
\begin{figure}[h]
	\centering
	\begin{subfigure}{0.45\textwidth}
		\centering
		\begin{tikzpicture}[scale = 1, 
			place2/.style={diamond,fill=blue,inner sep = 0.6mm},
			place4/.style={rectangle,fill=orange,inner sep = 0.9mm},
			place3/.style={regular polygon, regular polygon sides=3,fill=red,thick,inner sep = 0.5mm}]
			\draw[thick] (-6.5,0) -- (0,0);
			\node at (-5,0) [place3] {};
			\node at (-5.4,0) [place3] {};
			\node at (-5.8,0) [place3] {};
			\node at (-3.4,0) [place2] {};
			\node at (-3,0) [place2] {};
			\node at (-2.6,0) [place2] {};	
			\node at (-1,0) [place4] {};
			\node at (-0.6,0) [place4] {};
			\node at (-1.4,0) [place4] {};
			\node at (-6,2) [place3] {};
			\node[right,color=red] at (-5.9,2) {$\calC_1$};
			\node at (-6,1.5) [place4] {};
			\node[right,color=orange] at (-5.9,1.5) {$\calC_2$};
			\node at (-6,1) [place2] {};
			\node[right,color=blue] at (-5.9,1) {$\calC_3$};
			\draw[thick] (-6.4,.75) -- (-6.4,2.25) -- (-5.1,2.25) -- (-5.1,.75)--(-6.4,.75);
		\end{tikzpicture}
		\caption{Clumps decomposition}
	\end{subfigure}
	\begin{subfigure}{0.45\textwidth}
		\centering
		\begin{tikzpicture}[scale = 1, 
			place2/.style={diamond,fill=blue,inner sep = 0.6mm},
			place4/.style={rectangle,fill=orange,inner sep = 0.9mm},
			place3/.style={regular polygon, regular polygon sides=3,fill=red,thick,inner sep = 0.5mm}]
			\draw[thick] (-6.5,0) -- (0,0);
			\node at (-5,0) [place2] {};
			\node at (-5.4,0) [place3] {};
			\node at (-5.8,0) [place4] {};
			\node at (-2.6,0) [place2] {};
			\node at (-3,0) [place3] {};
			\node at (-3.4,0) [place4] {};	
			\node at (-0.6,0) [place2] {};
			\node at (-1,0) [place3] {};
			\node at (-1.4,0) [place4] {};
			\node at (-6,2) [place3] {};
			\node[right,color=red] at (-5.9,2) {$\calW_1$};
			\node at (-6,1.5) [place4] {};
			\node[right,color=orange] at (-5.9,1.5) {$\calW_2$};
			\node at (-6,1) [place2] {};
			\node[right,color=blue] at (-5.9,1) {$\calW_3$};
			\draw[thick] (-6.3,.75) -- (-6.3,2.25) -- (-5.1,2.25) -- (-5.1,.75)--(-6.3,.75);
		\end{tikzpicture}
		\caption{Sparsity decomposition}
	\end{subfigure}
	\caption{Clumps versus sparsity decomposition of the same set.}
	\label{fig:clumpsvssparsity}
\end{figure}

The usefulness of this decomposition for controlling the condition number of Fourier matrices is not obvious, but it will be made more clear in the following proof. 

\begin{proposition}
	\label{prop:goodset}
	Let $\calG\subset \T$ be a non-empty finite set such that for some $\tau \in (0,\frac 1 2]$, we have $|w|_\T > \tau$ for all $w\in \calG$. Suppose $m,r\in\N_+$ such that $\nu(\tau, \calG)\leq r$ and $m > \frac 1 \tau$. Then there is $f\in \calP_{r(m-1)+1}$ such that $f(0)=1$, $f$ vanishes on $\calG$, and 
	$$
	\|f\|_{L^2}\leq \frac{1}{\sqrt m} \bigg(1+\frac{1}{m\tau-1}\bigg)^{r/2}
	\andspace \|f\|_{L^\infty}\leq \bigg(1+\frac{1}{m\tau-1}\bigg)^{r/2}.
	$$ 
\end{proposition}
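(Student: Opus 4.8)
The plan is to combine the sparsity decomposition (\cref{prop:decomp}) with the well-separated singular value bound \eqref{eq:wellseparated}, recast in interpolation form via \cref{prop:interpolation2}. First I would apply \cref{prop:decomp} to $\calG$ with the given $\tau$ to obtain disjoint nonempty subsets $\calG_1,\dots,\calG_\nu$, where $\nu:=\nu(\tau,\calG)\leq r$, whose union is $\calG$ and each of which satisfies $\Delta(\calG_j)>\tau$. For the interpolation problem it is cleaner to work with $\calG_j^+:=\calG_j\cup\{0\}$. I claim $\Delta(\calG_j^+)>\tau$ as well: every element of $\calG_j$ is more than $\tau$ away from $0$ by hypothesis, and the elements within $\calG_j$ are more than $\tau$ apart by construction. (If some $\calG_j^+$ happens to equal $\{0\}$ — i.e.\ $\calG_j=\emptyset$ — that factor is trivial; but the decomposition gives nonempty $\calG_j$'s, so $\calG_j^+$ has cardinality $|\calG_j|+1\geq 2$.)

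Next, for each $j$ I apply \cref{prop:interpolation2} with $\calX=\calG_j^+$, with $m$ replaced by the same $m$ (noting $m>\tfrac1\tau\geq \tfrac1{1/2}=2\geq |\calG_j^+|$ is \emph{not} automatic — I need $m\geq |\calG_j^+|$; this follows because $m>1/\tau\geq$ the number of points $>\tau$-separated in $\T$, so $m$ exceeds $|\calG|+?$... more carefully, $\Delta(\calG_j^+)>\tau$ forces $|\calG_j^+|<1/\tau<m$), and with $w=e_1\in\C^{|\calG_j^+|}$, the indicator of the node $0$. This produces $g_j\in\calP_m$ with $g_j(0)=1$, $g_j$ vanishing on $\calG_j$, and
$$
\|g_j\|_{L^\infty}\leq \frac{\sqrt m\,\|e_1\|_2}{\sigma_{|\calG_j^+|}(\Phi(m,\calG_j^+))}=\frac{\sqrt m}{\sigma_{|\calG_j^+|}(\Phi(m,\calG_j^+))}.
$$
Since $\Delta(\calG_j^+)>\tau>\tfrac1m$, the lower bound in \eqref{eq:wellseparated} gives $\sigma_{|\calG_j^+|}(\Phi(m,\calG_j^+))\geq\sqrt{m-\tfrac1{\Delta(\calG_j^+)}}\geq\sqrt{m-\tfrac1\tau}$, hence $\|g_j\|_{L^\infty}\leq\sqrt{m/(m-1/\tau)}=\big(1+\tfrac1{m\tau-1}\big)^{1/2}$. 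Likewise $\|g_j\|_{L^2}\leq 1/\sqrt{m-1/\tau}$, but I will not need the $L^2$ bounds on the individual factors.

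Then I set $f:=\prod_{j=1}^\nu g_j$. Each $g_j\in\calP_m$ has degree $\leq m-1$, so $f\in\calP_{\nu(m-1)+1}\subset\calP_{r(m-1)+1}$ since $\nu\leq r$. We have $f(0)=\prod_j g_j(0)=1$, and $f$ vanishes on $\calG=\bigcup_j\calG_j$ because for any $w\in\calG$ there is some $j$ with $w\in\calG_j$, whence $g_j(w)=0$. For the $L^\infty$ bound, $\|f\|_{L^\infty}\leq\prod_{j=1}^\nu\|g_j\|_{L^\infty}\leq\big(1+\tfrac1{m\tau-1}\big)^{\nu/2}\leq\big(1+\tfrac1{m\tau-1}\big)^{r/2}$, using $\nu\leq r$ and that the base is $\geq1$. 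For the $L^2$ bound, I use $\|f\|_{L^2}\leq\|g_1\|_{L^2}\,\prod_{j=2}^\nu\|g_j\|_{L^\infty}$: pulling all but one factor out in $L^\infty$, then $\|g_1\|_{L^2}\leq1/\sqrt{m-1/\tau}=\tfrac1{\sqrt m}\big(1+\tfrac1{m\tau-1}\big)^{1/2}$, and the remaining $\nu-1\leq r-1$ sup-norm factors contribute $\big(1+\tfrac1{m\tau-1}\big)^{(\nu-1)/2}$, so altogether $\|f\|_{L^2}\leq\tfrac1{\sqrt m}\big(1+\tfrac1{m\tau-1}\big)^{\nu/2}\leq\tfrac1{\sqrt m}\big(1+\tfrac1{m\tau-1}\big)^{r/2}$, as claimed.

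The main obstacle — really the only nonroutine point — is verifying that the interpolation node sets $\calG_j^+$ genuinely inherit separation $>\tau$ once $0$ is adjoined, and that the cardinality bound $|\calG_j^+|\leq m$ needed to invoke \cref{prop:interpolation2} holds; both hinge on the hypothesis $m>1/\tau$ together with the elementary fact that a $\tau$-separated subset of $\T$ has at most $\lfloor 1/\tau\rfloor$ elements. Everything else is bookkeeping: the degree count for products in $\calP_m$, submultiplicativity of the sup norm, and splitting one $L^2$ factor off a product. One should also double-check the edge case $m\tau=1$ is excluded by the strict inequality $m>1/\tau$, so the bounds $1+\tfrac1{m\tau-1}$ are finite and $\geq 1$.
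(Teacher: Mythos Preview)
Your proposal is correct and follows essentially the same approach as the paper: apply the sparsity decomposition to $\calG$, adjoin $0$ to each piece so that the enlarged sets are $\tau$-separated, invoke \eqref{eq:wellseparated} together with \cref{prop:interpolation2} to get a degree-$(m-1)$ interpolant for each piece, and multiply. The paper phrases the final norm estimates as ``H\"older's inequality,'' which is exactly your split of one $L^2$ factor against the remaining $L^\infty$ factors; your explicit check that $|\calG_j^+|<1/\tau<m$ (needed to apply \cref{prop:interpolation2}) is a detail the paper leaves implicit.
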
	

\begin{proof}
	Let $\nu:=\nu(\tau,\calG)$. By \cref{prop:decomp}, there exists a disjoint decomposition,
	$$
	\calG=\calG_1\cup \calG_2\cup \cdots \cup \calG_\nu, \wherespace
	\calG_k\not=\emptyset \andspace \Delta(\calG_k)>\tau.
	$$
	The assumption that $|w|_\T > \tau$ for all $w\in\calG$ implies $\Delta( \calG_k\cup \{0\})> \tau$. Using the assumption $m>\frac 1\tau$, we invoke the lower bound in \eqref{eq:wellseparated}, which implies  
	$$
	\sigma_{\min}\big(\Phi\big(m, \calG_k\cup \{0\}\big)\big)
	\geq \sqrt {m -\frac 1 \tau}.
	$$	
	By \cref{prop:interpolation2}, applied to the data points $(\{0\}\cup \calG_k, e_1)$, there exists a $f_k\in\calP_{m}$ such that 
	\begin{equation}
		\label{eq:fk}
		f_k(0)=1, \quad f_k|_{\calG_k}=0, \quad
		\|f_k\|_{L^2}\leq  \frac{1}{\sqrt m} \sqrt{\frac{m\tau}{m\tau-1}} \andspace
		\|f_k\|_{L^\infty} \leq \sqrt{\frac{m\tau}{m\tau-1}}.
	\end{equation}
	Let $f$ be the product of $f_1,f_2,\dots,f_\nu$. It follows immediately from \eqref{eq:fk} that $f(0)=1$ and $f|_{\calG}=0$. The claimed bounds for $\|f\|_{L^2}$ and $\|f\|_{L^\infty}$ follow from H\"older's inequality. Moreover, we readily see that
	$$
	\deg(f)=\sum_{k=1}^\nu \deg(f_k)
	\leq \nu (m-1)
	\leq r(m-1).
	$$
\end{proof}

These polynomials can be numerically computed. First, we compute the decomposition of $\calG$ outlined in \cref{prop:decomp}, which can be done constructively using the greedy method described in its proof. Second, for each $\calG_k$ in this decomposition, we find an interpolant $f_k$ of the data $(\{0\}\cup \calG_k, e_1)$ via \cref{prop:interpolation2}. This can also be done numerically since $\hat{f_k}$ is precisely a scaled left singular vector of $\Phi(m,\{0\}\cup \calG_k)$, see the discussion following \eqref{eq:duality}. Finally, these interpolants are then multiplied together to yield the desired $f$.

We refer to a $f$ generated by this proposition as a {\it small norm Lagrange interpolant}. While each $f_k$ is found by minimizing a $L^2$ norm with interpolation constraints, it is not necessarily true that $f$ is also a minimum $L^2$ norm interpolant. Nonetheless, it is the pointwise bound that is important for this paper, and it is not clear if any of the $f_k$'s or $f$ are extremal in the $L^\infty$ norm. 

The interpolant in \cref{prop:goodset} enjoys many favorable and surprising properties. First, in the absence of additional assumptions, it is degree-optimal. Notice that $\Delta(\{0\}\cup \calG_k)>\tau$ and $0\not\in \calG$ imply that $(|\calG_k|+1)\tau <1$. This in turn establishes 
$$
|\calG|
\leq \nu \max_{1\leq k\leq \nu} |\calG_k|
\leq r \bigg\lfloor \frac{1}{\tau}-1 \bigg\rfloor.
$$
This inequality is sharp since it is possible to provide an example of a $\calG$ such that these inequalities are achieved. On the other hand, with the only stipulation that $m>\frac 1 \tau$, the theorem provides a polynomial of degree $r(m-1)$ that has up to $r\lfloor \frac{1}{\tau}-1\rfloor$ zeros. Hence it is not possible to reduce the degree of this interpolant in general. 

Aside from degree optimality, there is a second significance of small norm Lagrange interpolants, and to explain this, let us look at an example. For any $\epsilon>0$ that will be made sufficiently small, consider the set, 
\begin{equation}
	\label{eq:Gep}
	\calG_{\epsilon}
	= \bigg(\frac 1 4 +\{0,\epsilon,2\epsilon\}\bigg)\cup \bigg(\frac 1 2 +\{0,\epsilon\}\bigg) \cup \bigg( \frac 3 4 + \{0,\epsilon,\dots,3\epsilon\}\bigg).
\end{equation}
Whenever $\epsilon$ is small enough, this set consists of 9 elements. In order to find a Lagrange interpolant that interpolates $(0,1)$ and vanishes on $\calG_\epsilon$, we could use the usual Lagrange interpolant $\ell$ of degree $|\calG_\epsilon|=9$, which satisfies the inequality
$$
\|\ell \|_{L^\infty}
\leq 2^{|\calG_\epsilon|} \prod_{w\in\calG_\epsilon} \frac 1 {|1-e^{2\pi i w}|}.
$$
One advantage of the Lagrange interpolant is that its degree does not depend on $\epsilon$. However, $\|\ell\|_{L^\infty}$ grows exponentially  in $|\calG_\epsilon|$ and the product term may be large, which makes it rather unappealing for our purposes; this could be an artifact of estimating its sup-norm by upper bounding each term individually, but it is difficult to circumvent.

On the other hand, there are interpolation methods that yield interpolants with larger degrees but smaller norms. For instance, \cite[Theorem 2.1]{chandrasekaran2013minimum} proves that there is a trigonometric interpolant with good control over its norm, but requires the interpolant to have degree that scales inversely proportional to the minimum separation of the nodes. Hence, this result gives us an interpolant of any data defined on $\{0\}\cup \calG_\epsilon$ with degree that proportional to $\frac 1 \epsilon$. Related interpolation results in \cite{narcowich2004scattered} that are proved using functional analysis also exhibit similar behavior. 

In contrast to the above types of interpolants, the small norm Lagrange interpolants enjoy both advantages. Set $\tau=\frac 1 5$ and for all sufficiently small $\epsilon$, say $\epsilon\leq \frac 1 {300}$, we have $\nu(\tau,\calG_\epsilon)=4$. Pick $m=\frac 2 \tau = 10$ so that \cref{prop:goodset} is applicable. Notice that $\deg(f_\epsilon)=4\cdot 9 =36$ does not depend on $\epsilon$, hence does not explode as $\epsilon\to 0$. We also have $\|f_\epsilon\|_{L^\infty}\leq 2^{2}=4$ for all $\epsilon$. This is because the norm of $f$ does not grow exponentially in $|\calG_\epsilon|$, but only grows exponentially in $\nu(\tau,\calG_\epsilon)$. This is crucial for the purposes of this paper, since we do not want high degree interpolants or large norms, while allowing the minimum separation to be arbitrary. Graphs of the real part of the four interpolants $f_1,f_2,f_3,f_4$ generated by \cref{prop:goodset} are shown in Figure \ref{fig:smallnormL}.

\begin{figure}[h]
	\centering
	\includegraphics[height=0.25\textheight]{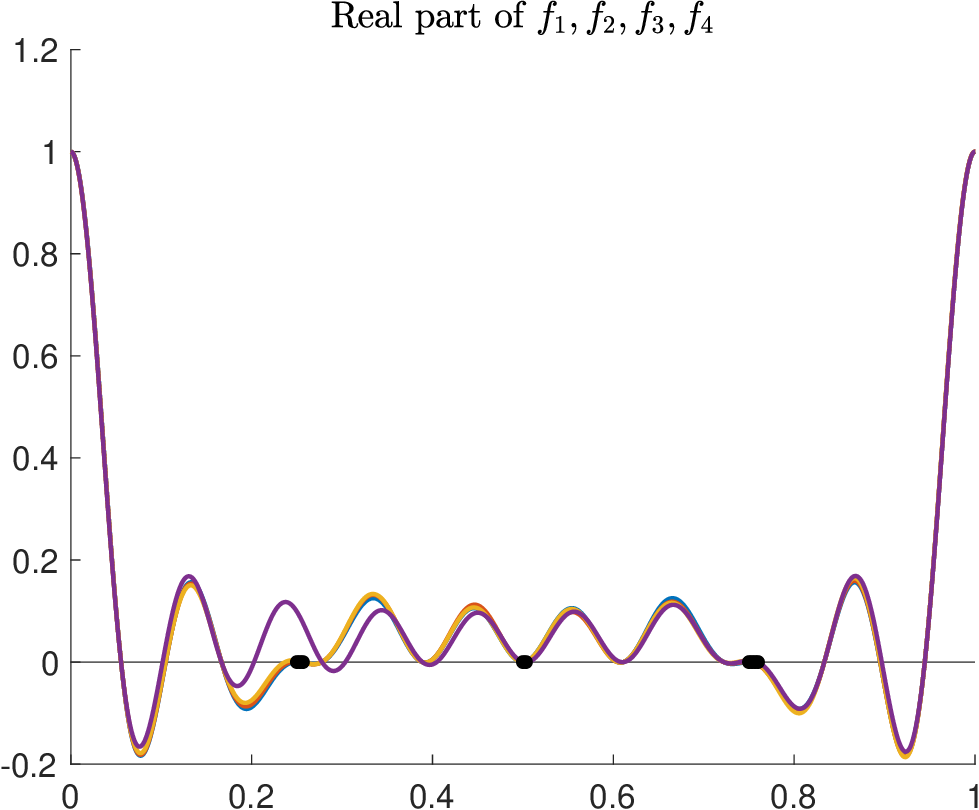} \ \ \ 
	\includegraphics[height=0.25\textheight]{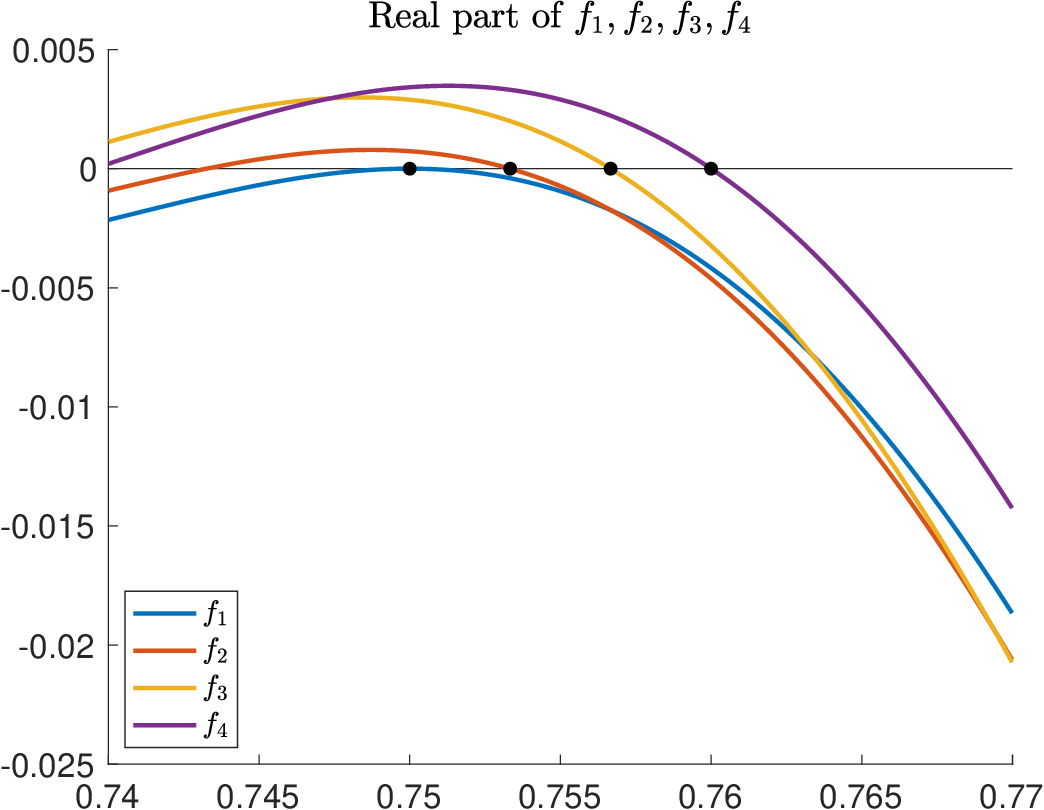}
	\caption{For the zero set $\calG_\epsilon$ defined in \eqref{eq:Gep} with $\epsilon=\frac 1 {300}$, the four small norm Lagrange interpolants each of degree $9$ are displayed.}
	\label{fig:smallnormL}
\end{figure}

We suspect that the advantages offered by \cref{prop:goodset} is highly dependent on the signs of the data that are being interpolated, that is, $1,0,\dots,0$ on $\{0\}\cup \calG_\epsilon$. The number `0' is not necessarily crucial though since we can interpolate $1,a,\dots,a$ as well by multiplying the small norm Lagrange interpolants by $1-a$ and then adding $a$. The important part here is that $0,\dots,0$ are all equal. In contrast, the other interpolation methods work for arbitrary data, which is perhaps why they do not enjoy the same advantages.

\subsection{Construction of local Lagrange polynomials}
\label{sec:bad}

In this section, we study the interpolation problem for the ``bad" set $\calB$. It contains $0$, all other elements in $\calB$ are close to zero, and we place no assumptions on $\Delta(\calB)$. We seek a trigonometric polynomial $f$ such that $f(0)=1$ and $f$ vanishes on $\calB\setminus \{0\}$. 

Throughout this paper, let $\psi\colon [-\frac 1 2,\frac 1 2]\to\R$ be the sinc kernel restricted to $[-\frac 1 2, \frac 1 2]$, 
	\begin{equation}
		\label{eq:psidef}
		\psi(t):=\begin{cases}
			\ \frac {\sin(\pi t)}{\pi t} &\text{if } t\not=0,\\
			\ \ \ \ 1 &\text{if } t=0.
		\end{cases}
	\end{equation}
	One can provide lower bounds for $\psi$ via Taylor expansions, but the point here is that multiplication by $\psi(t)$ is negligible whenever $t\approx 0$. It naturally appears from the following calculation. For all $|t|\leq \frac 1 2$, we have
\begin{equation}
	\label{eq:lawcos}
	|1-e^{2\pi it}|^2
	=2-2\cos(2\pi t)
	= 4 \pi^2 t^2 \bigg( \frac{ \sin (\pi t)}{\pi t} \bigg)^2
	= 4 \pi^2 t^2 \psi(t)^2.
\end{equation}
We have the basic bound that $\psi(t)\geq \psi(\frac 1 2) = \frac 2 \pi$ since it is decreasing away from zero in its domain. 

We have our first result for the bad set, which will be used in the proof of \cref{thm:main}.

\begin{lemma}
	\label{lem:badset1}
	Suppose $\calB$ is a set of at most $r$ points in $\T$ with $0\in \calB$ and $\calB\subset [-\tau,\tau]$ for some $\tau\in (0,\frac{1}{2}]$. For any $n\in \N_+$ such that $n\geq r$, define the subsets 
	$$
	\calI:=\left\{w\in \calB\colon 0<|w|_\T \leq \frac {r} {2n} \right\} \andspace
	\calJ:=\calB\setminus (\calI\cup \{0\}). 
	$$
	Then there exists a $f\in \calP_n$ such that $f$ vanishes on $\calB\setminus\{0\}$, $f(0)=1$, and 
	$$
	\|f\|_{L^2}
	\leq \frac{ 1}{\sqrt{\lfloor \frac n r \rfloor} } \prod_{w\in\calJ} \frac{1}{ 2\lfloor \frac 1 {2 |w|_\T} \rfloor |w|_\T} \prod_{w\in\calI}  \frac{1}{2 \lfloor \frac n r \rfloor |w|_\T}.
	$$
\end{lemma}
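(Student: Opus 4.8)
The plan is to build $f$ as a product of ``dilated'' Lagrange factors, one per nonzero point of $\calB$, choosing the dilation parameters so that (i) the total degree stays $\le n$, (ii) each factor vanishes at the appropriate point and equals $1$ at $0$, and (iii) the $L^2$ norm can be controlled via the Dirichlet-kernel estimate in \eqref{eq:wellseparated} combined with the $\psi$ identity \eqref{eq:lawcos}. First I would enumerate $\calB\setminus\{0\}=\{w_1,\dots,w_{p}\}$ with $p\le r-1$. To each $w_j$ I associate an integer dilation $q_j$: for $w_j\in\calJ$ (so $|w_j|_\T>\frac{r}{2n}$) I take $q_j:=\lfloor \frac{1}{2|w_j|_\T}\rfloor$, which satisfies $q_j\ge 1$ and $q_j|w_j|_\T\le\frac12$, while for $w_j\in\calI$ (so $0<|w_j|_\T\le\frac{r}{2n}$) I take $q_j:=\lfloor \frac{n}{r}\rfloor$, which again gives $q_j|w_j|_\T\le\frac12$ since $\lfloor\frac nr\rfloor\cdot\frac{r}{2n}\le\frac12$. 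The key number-theoretic fact (stated in the excerpt's proof outline) is that $q|t|_\T\le\frac12$ implies $|qt|_\T=q|t|_\T$, so each dilated point $q_j w_j$ stays ``visible'' at the right scale.

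Next I would define, for each $j$, the single-factor interpolant. The natural choice is to use \cref{prop:interpolation2} (or directly \eqref{eq:wellseparated}) on the two-point set $\{0, q_j w_j\}$: since $|q_j w_j|_\T = q_j|w_j|_\T > \tau_j$ for an appropriate scale $\tau_j$, there is $g_j\in\calP_{n_j}$ with $g_j(0)=1$, $g_j(q_j w_j)=0$, and $\|g_j\|_{L^2}^2\le \frac{1}{n_j - 1/|q_jw_j|_\T}$ for suitable $n_j$; then set $f_j(x):=g_j(q_j x)$, which lies in $\calP_{q_j(n_j-1)+1}$, vanishes at $w_j$, and equals $1$ at $0$. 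Actually it is cleaner to handle the two groups separately and in bulk: for the $\calI$-points, all $q_j$ are equal to $\lfloor n/r\rfloor$, so I can form one polynomial in the dilated variable $\lfloor n/r\rfloor\, x$ that vanishes at all $\lfloor n/r\rfloor\, w_j$ simultaneously and apply \eqref{eq:wellseparated} once to that (well-separated, after dilation) set together with $0$; this is where the $\frac{1}{\sqrt{\lfloor n/r\rfloor}}$ prefactor and the $\prod_{w\in\calI}\frac{1}{2\lfloor n/r\rfloor|w|_\T}$ product come from, after converting $|1-e^{2\pi i q w}|$ to $2\pi q|w|_\T\psi(qw)$ via \eqref{eq:lawcos} and using $\psi\ge\frac2\pi$. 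For the $\calJ$-points the dilations differ, so I would build them as individual factors $f_j(x) = \frac{e^{2\pi i q_j x}-e^{2\pi i q_j w_j}}{1-e^{2\pi i q_j w_j}}$ of degree $q_j$ and use $|1-e^{2\pi i q_j w_j}| = 2\pi q_j|w_j|_\T\psi(q_jw_j)\ge 4 q_j|w_j|_\T$ (again $\psi\ge 2/\pi$) together with $\|f_j\|_{L^\infty}\le \frac{2}{|1-e^{2\pi i q_j w_j}|}\le \frac{1}{2q_j|w_j|_\T} = \frac{1}{2\lfloor 1/(2|w|_\T)\rfloor|w|_\T}$. Multiplying all factors and the $\calI$-block together and applying H\"older gives the stated $L^2$ bound, with the $\calI$-block carrying the $L^2$ weight and the $\calJ$-factors contributing in $L^\infty$.

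Finally I would verify the degree budget: $\deg f \le 1 + \big(\lfloor n/r\rfloor\big)(\text{something}) + \sum_{w\in\calJ} q_w$. For the $\calI$-block, treating it via \eqref{eq:wellseparated} on a set of at most $|\calI|+1\le r$ points each dilated by $\lfloor n/r\rfloor$ needs degree at most $\lfloor n/r\rfloor$ times (number of points $-1$), which is $\le \lfloor n/r\rfloor (r-1) \le n - \lfloor n/r\rfloor$. For each $\calJ$-factor, $q_w = \lfloor \frac{1}{2|w|_\T}\rfloor < \frac{1}{2|w|_\T} \le \frac{n}{r}$ since $|w|_\T > \frac{r}{2n}$; there are at most $r-1-|\calI|$ such factors, contributing at most $(r-1)\cdot\frac nr < n$ — but a cruder per-factor accounting is not enough and one must combine the two groups carefully. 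I expect this degree bookkeeping to be the main obstacle: the exponents $q_w$ for $\calJ$ and the common $\lfloor n/r\rfloor$ for $\calI$ must simultaneously fit under $n$, which is exactly why the threshold $\frac{r}{2n}$ separating $\calI$ from $\calJ$ is chosen as it is, and getting the inequalities to close (rather than losing a constant factor in the degree) is the delicate point. The norm estimates themselves are routine once \eqref{eq:lawcos}, $\psi\ge 2/\pi$, and \eqref{eq:wellseparated} are in hand.
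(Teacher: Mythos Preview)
Your treatment of the $\calJ$-factors is fine and matches the paper, but your handling of the $\calI$-block contains a genuine gap. You propose to apply \eqref{eq:wellseparated} to the dilated set $\{0\}\cup\{\lfloor n/r\rfloor w : w\in\calI\}$, calling it ``well-separated after dilation.'' It is not: the lemma places no lower bound on $\Delta(\calB)$, so two points of $\calI$ can be at distance $\epsilon$ with $\epsilon$ arbitrarily small, and after a fixed dilation by $q=\lfloor n/r\rfloor$ their distance is $q\epsilon$, still arbitrarily small. Even if separation held, \eqref{eq:wellseparated} would hand you a single factor $1/\sqrt{m-1/\Delta}$, not the product $\prod_{w\in\calI}\frac{1}{2q|w|_\T}$ you claim to extract from it.

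The paper obtains both the prefactor and the product by a different mechanism. It forms the Lagrange-type product
\[
h_0(x)=\prod_{w\in\calI}\frac{e^{2\pi i qx}-e^{2\pi i qw}}{1-e^{2\pi i qw}},
\]
whose $L^\infty$ norm, via \eqref{eq:lawcos} and $\psi\ge 2/\pi$, gives exactly the product $\prod_{w\in\calI}\frac{1}{2q|w|_\T}$ (same estimate you used for individual $\calJ$-factors). The crucial step you are missing is that $h_0$ has Fourier support in $q\Z$, so the $q$ modulated copies $e^{2\pi i\ell x}h_0(x)$, $\ell=0,\dots,q-1$, are pairwise $L^2$-orthogonal; averaging them yields $h=\frac1q\sum_{\ell}e^{2\pi i\ell x}h_0(x)$ with $h(0)=1$, $h|_{\calI}=0$, $\deg(h)=|\calI|q+q-1$, and $\|h\|_{L^2}\le q^{-1/2}\|h_0\|_{L^\infty}$. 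This orthogonality trick --- not \eqref{eq:wellseparated} --- is the source of the $1/\sqrt{\lfloor n/r\rfloor}$. With this $h$ in place of your $\calI$-block, your degree accounting closes cleanly: $\deg(gh)\le (|\calI|+1)q-1+\sum_{w\in\calJ}q(w)\le \frac nr(|\calI|+|\calJ|+1)-1\le n-1$. You should also note the edge cases $\calB=\{0\}$ and $\calI=\emptyset$, where $h$ is taken to be a normalized Dirichlet kernel of length $\lfloor n/r\rfloor$.
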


\begin{proof}
	We first deal with the $\calB=\{0\}$ case, in which case $\calI=\calJ=\emptyset$. Then we define $f$ to be a normalized Dirichlet kernel,
	$$
	f(x):= \frac{1}{n} \sum_{\ell=0}^{n-1} e^{2\pi i \ell x}.
	$$
	Notice that $f(0)=1$ and $f\in \calP_{n}$. Moreover, an application of Parseval establishes that for any $r\in\N_+$, we have 
	$$
	\|f\|_{L^2}\leq \frac 1 {\sqrt n}\leq \frac{ 1}{\sqrt{\lfloor \frac n r \rfloor} }.
	$$
	This takes care of the $\calB=\{0\}$ case. 
	
	From here onward, assume that $|\calB|\geq 2$. 	We first deal with the case that $\calI\not=\emptyset$. For each $w\in\calB$, we define the natural number $q(w)$ where 
	$$
	q(w):=q:= \left\lfloor \frac n r \right\rfloor \quad\text{if } w\in\calI, \andspace
	q(w):= \left\lfloor \frac 1 {2 |w|_\T} \right\rfloor\quad\text{if } w\in \calJ.
	$$
	We readily verify that $q(w)|w|_\T\leq \frac 1 2$ for all $w\in \calI\cup\calJ$, which implies 
	\begin{equation}
		\label{eq:qdilation}
		|q(w) w|_\T= q(w) |w|_\T \foreachspace w\in\calI\cup\calJ.
	\end{equation}
	In particular, this implies $|q(w) w|_\T\not=0$ whenever $w\in\calI\cup\calJ$. This enables us to define the polynomials,
	$$
	h_0(x)
	:=\prod_{w\in\calI} \frac{e^{2\pi i q(w) x}-e^{2\pi i q(w)w}}{1-e^{2\pi i q(w)w}}, \andspace
	g(x):=\prod_{w\in\calJ} \frac{e^{2\pi i q(w) x}-e^{2\pi i q(w)w}}{1-e^{2\pi i q(w)w}}.
	$$
	By construction, $h_0$ vanishes on $\calI$ and $h_0(0)=1$. For each $\ell=1,2,\dots,q-1$, we define the functions
	$$
	h_\ell(x):=e^{2\pi i\ell x}h_0(x)\andspace  h:= \frac 1 q \sum_{\ell=0}^{q-1} h_\ell. 
	$$
	Using that $|h_\ell|=|h_0|$, we see that $h(0)=1$ and $h$ also vanishes on $\calI$. Note that $h_\ell$ is a trigonometric polynomial whose frequencies are in $\ell+\{0,q,\dots |\calI|q\}$. This implies that $\{h_\ell\}_{\ell=0}^{q-1}$ are $L^2$ orthogonal, $\deg(h)\leq |\calI|q+q-1$, and by orthogonality,
	\begin{equation}
		\label{eq:hbound} 
		\| h \|_{L^2}^2
		=\frac{1}{q^2} \sum_{\ell=1}^{q-1} \|h_\ell\|_{L^2}^2
		= \frac{1}{q^2} \sum_{\ell=1}^{q-1} \|h_0\|_{L^2}^2
		\leq \frac 1 q \|h_0\|_{L^\infty}^2. 
	\end{equation}
	
	We define $f:=gh$. By construction, $f(0)=1$ and $f$ vanishes on $\calB\setminus\{0\}$. We bound the degree of $f$. Note for each $w\in \calJ$, we have $|w|_\T>\frac r n$ and so $q(w)\leq \frac{n}{2r}$. This implies, together with the assumption $|\calB|\leq r$, that
	\begin{align*}
		\deg(f)
		%&=\deg(h)+\deg(g)
		=|\calI|q + q-1 + \sum_{w\in \calJ} q(w)
		\leq \frac{n(|\calI|+1) }{r} -1 + \frac{n|\calJ| }{r}
		\leq \frac{n|\calB| }{r} -1
		\leq n-1.
	\end{align*}

	It remains to obtain the desired bound for $\|f\|_{L^2}$. Using \eqref{eq:hbound}, we get 
	\begin{align*}
		\|f\|_{L^2}
		\leq \|h\|_{L^2}\|g\|_{L^\infty}
		\leq \frac{1}{\sqrt q}\|h_0\|_{L^\infty} \|g\|_{L^\infty}
		\leq \frac{1}{\sqrt q}\prod_{w\in\calI\cup\calJ} \frac{2}{|1-e^{2\pi i q(w)w}|}.
		%= \frac{1}{\sqrt q}\prod_{w\in\calU\cup\calV} \frac{2}{\sqrt{2-2\cos(2\pi q(w)|w|_\T)}} .
	\end{align*} 
	Combining this with \eqref{eq:lawcos}, \eqref{eq:qdilation}, and that $\psi(t)\geq \frac 2 \pi$ for $|t|\leq \frac 1 2$, we obtain 
	\begin{align*}
		\|f\|_{L^2}
		\leq \frac{1}{\sqrt q} \prod_{w\in\calB} \frac{1}{\pi |q(w) w|_\T \, \psi(|q(w)w|_\T)}
		\leq \frac{1}{\sqrt q} \prod_{w\in\calB} \frac{1}{2 q(w) |w|_\T}.
	\end{align*}
	Using the definition of $q(w)$ yields the claimed upper bound for $\|f\|_{L^2}$ when $\calI\not=\emptyset$. 
	
	Finally, for the remaining case where $|\calB|\geq 2$ and $\calI=\emptyset$, we use the same $g$ as above but with a different $h$. We instead use a normalized Dirichlet kernel, $$h(x):=\frac{1}{\lfloor n/r \rfloor} \sum_{\ell=0}^{\lfloor n/r \rfloor-1} e^{2\pi i \ell x}.$$
	We have $h\in \calP_{\lfloor n/r \rfloor}$ and that $\|h\|_{L^2}= 1/ \sqrt{\lfloor n/r \rfloor}$. Setting $f:=gh$, we see that 
	$$
	\deg(f)
	=\deg(g)+\deg(h)
	\leq |\calJ| \frac{n}{r}+ \frac n r -1
	\leq n-1, 
	$$
	where the final inequality used that since $\calI=\emptyset$ and $0\in\calB$, we have $|\calJ|=|\calB|-1\leq r-1$. Hence, $f\in \calP_n$ and it satisfies the desired interpolation properties. Combining the previous upper bounds for $\|g\|_{L^\infty}$ and $\|h\|_{L^2}$ completes the proof.
		
\end{proof}

The following is our second result for the bad set, which will be used in the proof of \cref{thm:main2}. 

\begin{lemma}
	\label{lem:badset2}
	Let $n,r\in \N_+$ such that $n\geq r$ and $\delta\in (0,\frac 1 n]$. Suppose $\calB\subset\T$ is a finite set such that $0\in \calB$, $|\calB|=r$ and $\delta\leq \Delta(\calB)\leq \frac 1 n$. Then there exists a $f\in \calP_n$ such that $f$ vanishes on $\calB\setminus \{0\}$, $f(0)=1$, and 
	$$
	\|f\|_{L^2}
	\leq \frac{2e}{\pi r}  \frac {1}{\sqrt{\lfloor \frac n r \rfloor}} \left( \frac{4e}{ \pi \psi(\frac {n \delta}{2}) r \lfloor \frac n r \rfloor \delta} \right)^{r-1}.
	$$
\end{lemma}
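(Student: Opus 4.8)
The plan is to adapt the local-dilation construction behind \cref{lem:badset1}, rearranging every distance that enters the estimate so that it can be bounded from below in terms of $\delta$ alone. First I would dispose of the case $r=1$: take $f$ to be the normalized Dirichlet kernel $f(x)=\frac1n\sum_{\ell=0}^{n-1}e^{2\pi i\ell x}\in\calP_n$, so $f(0)=1$ and, by Parseval, $\|f\|_{L^2}=n^{-1/2}$, which is at most $\tfrac{2e}{\pi}n^{-1/2}$ since $2e>\pi$. So assume $r\ge 2$.

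Set $q:=\lfloor n/r\rfloor\ge 1$ and order $\calB\setminus\{0\}=\{w_{(1)},\dots,w_{(r-1)}\}$ by increasing distance to $0$. The single geometric input is the lower bound $|w_{(j)}|_\T\ge\tfrac12 j\delta$ for every $j$: the $j+1$ points $0,w_{(1)},\dots,w_{(j)}$ all lie in the arc $[-|w_{(j)}|_\T,|w_{(j)}|_\T]$ (of length $2|w_{(j)}|_\T\le 1$) and are pairwise at least $\Delta(\calB)\ge\delta$ apart, so this arc has length at least $j\delta$. I would then split $\calB\setminus\{0\}$ into an inner part $\calI:=\{w:|w|_\T\le\tfrac{1}{2q}\}=\{w_{(1)},\dots,w_{(i)}\}$ and an outer part $\calJ:=\{w_{(i+1)},\dots,w_{(r-1)}\}$, assign dilation multipliers $q(w):=q$ for $w\in\calI$ and $q(w):=\lfloor\tfrac{1}{2|w|_\T}\rfloor$ for $w\in\calJ$ (in both cases $0<q(w)|w|_\T\le\tfrac12$, and on $\calJ$ one also checks $q(w)|w|_\T\ge\tfrac14$), and build $g(x):=\prod_{w\in\calJ}\frac{e^{2\pi i q(w) x}-e^{2\pi i q(w) w}}{1-e^{2\pi i q(w) w}}$, $h_0(x):=\prod_{w\in\calI}\frac{e^{2\pi i q x}-e^{2\pi i q w}}{1-e^{2\pi i q w}}$, $h(x):=\frac1q\sum_{\ell=0}^{q-1}e^{2\pi i\ell x}h_0(x)$, and finally $f:=gh$. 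As in \cref{lem:badset1}, $f(0)=1$, $f$ vanishes on $\calB\setminus\{0\}$, the frequencies of $h_0$ are multiples of $q$ so the shifted copies making up $h$ are $L^2$-orthogonal, and a degree count gives $\deg f\le\sum_{w\in\calJ}q(w)+|\calI|q+(q-1)\le rq-1\le n-1$, i.e.\ $f\in\calP_n$.

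For the norm, orthogonality of the shifts in $h$ gives $\|f\|_{L^2}\le\|g\|_{L^\infty}\|h\|_{L^2}\le q^{-1/2}\prod_{w\in\calI\cup\calJ}\frac{2}{|1-e^{2\pi i q(w) w}|}$. I would convert each factor via \eqref{eq:lawcos}, i.e.\ $\frac{2}{|1-e^{2\pi i t}|}=\frac{1}{\pi|t|_\T\,\psi(|t|_\T)}$, using monotonicity of $\psi$ on $[0,\tfrac12]$: each $\calJ$-factor is at most $2$ because $q(w)|w|_\T\in[\tfrac14,\tfrac12]$, while each $\calI$-factor is at most $\tfrac{1}{2q|w|_\T}$, and here the geometric bound $|w_{(j)}|_\T\ge\tfrac12 j\delta$ turns the $\calI$-product into $\prod_{j=1}^{i}\frac{1}{q j\delta}=\frac{1}{i!\,(q\delta)^{i}}$. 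The appearance of $i!$ is the crucial gain: together with $2^{|\calJ|}=2^{r-1-i}$ and $q\delta\le\tfrac1r$, it makes $q^{-1/2}2^{r-1-i}/(i!(q\delta)^{i})$ dominated by the claimed right-hand side after one maximizes over $i\in\{0,\dots,r-1\}$; after invoking $q\delta\le\tfrac1r$ the worst case is $i$ near $r/2$, and a Stirling estimate for $(r/2)!$ supplies the remaining $\sqrt r$ and the correct exponential base.

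I expect the main obstacle to be matching the stated constants exactly, and in particular producing the factor $\psi(\tfrac{n\delta}{2})$ rather than the crude $\psi(\tfrac12)=\tfrac2\pi$ that the $\calI$-estimate above yields directly. Recovering it should require a finer, distance-adapted choice of the inner multipliers -- for instance dilating the very closest points so that $q(w)|w|_\T$ stays below $\tfrac{n\delta}{2}$ (legitimate since $|w|_\T\ge\delta$, so $\psi(q(w)|w|_\T)\ge\psi(\tfrac{n\delta}{2})$), rather than uniformly capping at $q$ -- while checking, again via $|w_{(j)}|_\T\ge\tfrac12 j\delta$, that the extra degree this costs still fits the budget $rq-1\le n-1$. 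The degree bookkeeping and the Parseval/orthogonality steps are routine given \cref{lem:badset1}; the delicate part is balancing the dilation regimes and the Stirling inequality so that nothing is lost and the final prefactor and base land on the stated $\tfrac{2e}{\pi}$- and $\tfrac{4e}{\pi}$-type values.
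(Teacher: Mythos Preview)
Your construction of $f=gh$, the degree bookkeeping, and the orthogonality step all coincide with the paper's proof, and your pigeonhole bound $|w_{(j)}|_\T\ge \tfrac12 j\delta$ is equivalent in spirit to the paper's packing argument. The genuine divergence is in the two places you flag as ``the main obstacle,'' and the fix you propose is not the one that works.

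First, on $\calJ$ the paper does \emph{not} use $q(w)=\lfloor 1/(2|w|_\T)\rfloor$ as in \cref{lem:badset1}. Instead it assigns to the $k$-th ordered point $w_k$ the multiplier $q_k=\big\lfloor q|a_k|\delta/|w_k|_\T\big\rfloor$, where $a_1,a_2,\dots=-1,1,-2,2,\dots$. This forces $q_k|w_k|_\T\in[\tfrac12 q|a_k|\delta,\,q|a_k|\delta]\subset(0,\tfrac{n\delta}{2}]$, so each $\calJ$-factor has the same shape $1/\big(\pi\psi(\cdot)\,q|a_k|\delta\big)$ as the $\calI$-factors in the extremal case. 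Your choice gives only ``$\le 2$'' per $\calJ$-factor, which is precisely the $\psi(\tfrac12)=\tfrac2\pi$ loss.

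Second, on $\calI$ the paper does not bound each distance individually via $|w_{(j)}|_\T\ge \tfrac12 j\delta$; it argues that the full product $\prod_{w\in\calI}\frac{2}{|1-e^{2\pi i q w}|}$ is \emph{maximized}, under the constraints $\Delta\ge\delta$ and $0\notin\calI$, exactly at the packed configuration $\{a_1\delta,\dots,a_\ell\delta\}$. In that configuration $q|a_k|\delta\le \tfrac{n\delta}{2}$ for every $k$, so $\psi(q|a_k|\delta)\ge\psi(\tfrac{n\delta}{2})$ comes for free --- no need to shrink any multiplier. Your proposed fix of ``dilating the very closest points so that $q(w)|w|_\T\le \tfrac{n\delta}{2}$'' goes the wrong way: shrinking $q(w)$ \emph{enlarges} the factor $1/(q(w)|w|_\T)$, and in any case the problematic points are the ones with $|w|$ near $\tfrac1{2q}$, not the closest ones. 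With the paper's two changes, every one of the $r-1$ factors carries $\psi(\tfrac{n\delta}{2})$, and Stirling applied to $\prod_{k=1}^{r-1}|a_k|=\big(\lceil\tfrac{r-1}{2}\rceil!\big)\big(\lfloor\tfrac{r-1}{2}\rfloor!\big)$ yields the stated $\tfrac{2e}{\pi r}$ and $\tfrac{4e}{\pi}$ constants.
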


\begin{proof}
	We define the subsets,
	$$
	\calI:=\bigg\{w\in \calB\colon 0<|w|_\T \leq \frac {r} {2n} \bigg\} \andspace
	\calJ:=\calB\setminus (\calI\cup \{0\}). 
	$$ 
	We first deal with the $\calB=\{0\}$ case, in which case $r=1$ and $\calI=\calJ=\emptyset$. Similar to the proof of \cref{lem:badset1}, we define 
	$
	f(x):= \frac{1}{n} \sum_{\ell=0}^{n-1} e^{2\pi i \ell x}.
	$
	Then $f(0)=1$, $f\in \calP_{n}$, and 
	$
	\|f\|_{L^2}\leq \frac 1 {\sqrt n}.
	$
	Since $\frac{2e}{\pi}\geq 1$, this proves the $\calB=\{0\}$ case. 
	
	From here onward assume that $r\geq 2$. We enumerate the elements of $\calB$ as $0=w_0,w_1,\dots,w_{r-1}$ where $|w_k|_\T\leq |w_{k+1}|_\T$ for each $k$. 
	For reasons that will become apparent later, for any $d\in\N_+$, we define the following sequence, $0,-1,1,-2,2,-3,3,\dots$, which we enumerate by $a_0,a_1,\dots$. We define the natural numbers $q_1,\dots,q_{r-1}$ as 
	$$
	q_k:=q:=\left\lfloor \frac n r \right\rfloor \quad \text{if } w_k\in \calI, \andspace
	q_k :=  \left\lfloor \frac{q |a_k| \delta}{|w_k|_\T} \right\rfloor \quad \text{if } w_k\in \calJ. 
	$$
	We need to set the stage before we explicitly construct $f$. For each $w_k\in\calI$, we immediately get $q_k|w_k|_\T\leq \frac 1 2$ by definition of $q$ and $\calI$. For each $w_k\in \calJ$, we use that $|a_k|\leq \frac r 2$ regardless of the parity of $r$ and the assumption $\delta\leq \frac 1 n$ to see that $q_k|w_k|_\T \leq q |a_k|\delta	\leq \frac 1 2$. This implies 
	\begin{equation}
		\label{eq:dilationhelp}
		|q_kw_k|_\T = q_k|w_k|_\T \foreachspace w_k\in \calB\setminus \{0\}.
	\end{equation}
	This enables us to define the polynomials,
	$$
	h_0(x)
	:=\prod_{w\in\calI} \frac{e^{2\pi i q x}-e^{2\pi i q w}}{1-e^{2\pi i q w}}, \andspace
	g(x):=\prod_{w_k\in\calJ} \frac{e^{2\pi i q_k x}-e^{2\pi i q_k w_k}}{1-e^{2\pi i q_k w_k}}.
	$$ 
	We repeat the same sub-argument that appeared in the proof of \cref{lem:badset1}. We define the function $h(x):=\frac{1}{q}\sum_{\ell=0}^{q-1} e^{2\pi i \ell x}h_0(x)$, and we see that 
	$
	\|h\|_{L^2}
	\leq \frac 1 {\sqrt {q}} \|h_0\|_{L^\infty}. 
	$ Thus, we define $f=gh$ and so  
	\begin{equation}
		\label{eq:help3}
		\|f\|_{L^2}
		\leq \frac 1 {\sqrt {q}} \|h_0\|_{L^\infty} \|g\|_{L^\infty}
		\leq \frac 1 {\sqrt {q}} \prod_{w\in\calI} \frac{2}{|1-e^{2\pi i qw}|} \prod_{w_k\in\calJ} \frac{2}{|1-e^{2\pi i q_k w_k}|}.
	\end{equation}
	
	By construction, $f$ satisfies the desired interpolation properties. We argue that $f\in \calP_n$. Notice that $\deg(h_0)=q|\calI|$ and $\deg(h)=q|\calI|+q-1$. On the other hand, for each $w_k\in \calJ$, using that $|a_k|\leq \frac r 2$, $\delta\leq \frac 1 n$, and $|w_k|_\T >\frac r{2n}$, we see that $q_k \leq q$. Thus,  
	\begin{align*}
		\deg(f)
		= q(|\calI|+1)-1 + \sum_{w_k\in \calJ} q_k
		&\leq q (|\calI|+|\calJ|+1) - 1
		\leq \frac {n|\calB|} r - 1
		\leq n-1.  
	\end{align*}
	It remains to upper bound $\|f\|_{L^2}$. By \eqref{eq:dilationhelp}, we see that $|q_k w_k|_\T=q_k|w_k|_\T\geq q |a_k| \delta/2$. Using this inequality on the right side of \eqref{eq:help3}, we get 
	\begin{equation}
		\label{eq:hbound1}
		\|f\|_{L^2}
		\leq \frac 1 {\sqrt {q}} \prod_{w\in\calI} \frac{2}{\sqrt{2-2\cos (2\pi q|w|_\T)}} \prod_{w_k\in\calJ} \frac{2}{\sqrt{2-2\cos (2\pi q |a_k| \delta/2)}}.
	\end{equation}

	We let $\ell:=|\calI|$ and claim that
	\begin{equation}
		\label{eq:hbound3}
		\prod_{w\in\calI} \frac{2}{\sqrt{2-2\cos (2\pi q|w|_\T)}}
		\leq \prod_{k=1}^\ell \frac{2}{\sqrt{2-2\cos (2\pi q|a_k|\delta)}} 
	\end{equation}
	Recall that $\Delta(\calI\cup \{0\})\geq \delta$ and that $0\not\in\calI$. We define the auxiliary function, 
	$$
	\gamma(t_1,t_2,\dots,t_\ell):=\sum_{k=1}^\ell \frac{1}{1-\cos(2\pi q|t_k|)},
	$$
	where $\delta\leq |t_k|\leq \frac 1 {2q}$ for each $k$ and $|t_j-t_k|\geq \delta$ for each $j\not=k$. Clearly this function increases if any $t_j$ is made smaller while the remaining $t_k$'s are fixed. We claim that $\gamma$ is maximized precisely when $t_1,t_2,\dots,t_\ell$ is $a_1\delta,a_2\delta,\dots,a_\ell\delta$. To see this, we list $t_1,t_2,\dots,t_\ell$ as $\{u_1,\dots,u_a,v_1,\dots,v_b\}$ where $a+b=\ell$ and
	$$
	u_a < \cdots < u_1\leq -\delta <0<\delta\leq v_1<\cdots < v_b. 
	$$
	If $b\geq 1$, we can assume that $v_1=\delta$ since a shift of all $v_1,\dots,v_b$ by the same amount towards 0 increases the value of $\gamma$. If $a\geq 1$, we can likewise assume that $u_1=-\delta$. Finally, $\gamma$ is further increased if all the $u$'s and $v$'s are fixed except $v_2$ is replaced with $2\delta$, then $v_3$ is moved to $3\delta$, etc. Likewise, $\gamma$ is increased if $u_2$ is replaced to $-2\delta$, etc. Hence, we see that $\gamma(t_1,t_2,\dots,t_\ell)$ is dominated by $\gamma(-a\delta, \dots, -\delta, \delta,\dots,b\delta)$. If $|a-b|>1$, then by reflecting elements across the origin and shifting again, we see that $\gamma(-a\delta, \dots, -\delta, \delta,\dots,b\delta)$ is further dominated by $\gamma(a_1\delta,a_2\delta,\dots,a_\ell\delta)$. This establishes inequality \eqref{eq:hbound3}.
	
	We continue with the upper bound for $\|f\|_{L^2}$. Note that $q|a_k|\delta \leq \frac {n\delta} 2$ for each $k=1,\dots,r-1$ and that $\psi$ is decreasing on $[0,\frac 12 ]$. Using \eqref{eq:lawcos}, \eqref{eq:hbound1}, and \eqref{eq:hbound3}, we see that
	\begin{equation}
		\label{eq:hbound2}
		\|f\|_{L^2}
		\leq \frac 1 {\sqrt {q}} \prod_{k=1}^\ell \frac{1}{ \pi \psi(q|a_k|\delta) q|a_k|\delta } \prod_{w_k\in\calJ} \frac{2}{\pi \psi(q|a_k|\delta/2) q|a_k|\delta }
		\leq \frac 1 {\sqrt {q}}  \prod_{k=1}^{r-1} \frac{2}{\pi \psi(\frac {n \delta}{2}) q |a_k| \delta }.
	\end{equation}
	
	To control the product over $k$, first note that 
	$$
	C_d
	:=\prod_{k=1}^{d}|a_k|
	=
	\begin{cases}
		\ \ \ \ \big(\frac{d}{2} \, ! \, \big) \big(\frac{d}{2} \, ! \, \big) &\quad \text{if $d$ is even}, \medskip \\
		\ \big( \frac{d+1}{2} \, !\big) \big(\frac{d-1}{2} \, ! \, \big)  &\quad \text{if $d$ is odd}. 
	\end{cases}
	$$
	Recall the well known inequalities $k! \geq \sqrt{2\pi k} \, (\frac{k}{e})^k$ and $1+t\leq e^t$ for all $t\geq 0$. We have $C_1=1$, and for $d\geq 2$, we have
	\begin{align*}
		\frac {(d+1)^{d}} {C_d} 
		&\leq \frac{(2e)^{d}}{\pi d} \, \(\frac{d+1}{d}\)^{d}
		\leq \frac{(2e)^{d} e}{\pi d}
		\leq \frac{(2e)^{d} 2e}{\pi (d+1)} \quad \text{if $d$ is even}, \\
		\frac {(d+1)^{d}} {C_d} 
		&= \frac{2(d+1)^{d-1}} {(\frac {d-1}{2} !)^2} 
		\leq \frac{2(2e)^{d-1}}{\pi (d-1)} \(\frac{d+1}{d-1}\)^{d-1}
		\leq \frac{2(2e)^{d-1}e^2}{\pi (d-1)}
		\leq \frac{ (2e)^{d} 2e }{\pi (d+1)} \quad \text{if $d$ is odd}.
	\end{align*}
	%	Hence, for all $d\geq 1$,
	%	$$
	%	\frac {(d+1)^{2d+1}} {C_d}
	%	\leq \frac{(2e)^{2d+2}}{d+1}.
	%	$$
	Using these and the definition of $q$ in \eqref{eq:hbound2} completes the proof.  
\end{proof}

It may be worthwhile to mention a subtle technical part of this proof. We do not prove, or claim, that the ``worst case" $\calB$ (up to trivial invariances) satisfying the hypotheses of \cref{lem:badset2}  is 
\begin{equation}
	\label{eq:Bdelta}
	\delta \left\{-\left\lfloor \frac {r-1}2\right\rfloor,\dots,-1,0,1,\dots, \left\lceil \frac {r-1}2\right\rceil\right\}.
\end{equation}
In this proof, we showed that for any $\calB$ satisfying the hypotheses, there is an explicit Lagrange interpolant $f$ with some upper bound for $\|f\|_{L^2}$. Then we showed that this upper bound for $\|f\|_{L^2}$ is maximized precisely when $\calB$ is equal to \eqref{eq:Bdelta}. This is the quantity reported in the conclusion of \cref{lem:badset2}.

\section{Proofs of the main results}

\label{sec:proofs}

\subsection{Proof of \cref{thm:main}}

\begin{proof}
	
	Let us first set the stage and discuss several immediate implications of the assumptions. Fix any $1\leq k\leq s$ and for convenience, we define
	$$
	n_k:= \left\lfloor m- \frac{2\nu(\tau,\calG_k)} \tau \right\rfloor. 
	$$
	Note that $\nu(\tau,\calG_k)\leq \nu(\tau,\calX)$ since $\calG_k\subset\calX$. Also using the assumptions $m\geq 6s$ and ${3\nu(\tau, \calX)}  \leq \tau m$, we have
	\begin{equation}
		\label{eq:start}
		m- \frac {2 \nu(\tau,\calG_k)} \tau 
		\geq  m- \frac {2 \nu(\tau,\calX)} \tau 
		\geq \frac {m} 3
		\geq 2s.
	\end{equation}
	As immediate consequences of this inequality, we have $n_k\in\N_+$ and that $\alpha_k>0$. Since $\nu(\tau,\calB_k)\leq \nu(\tau,\calX)$ due to $\calB_k\subset\calX$, we use the assumption ${3\nu(\tau, \calX)}  \leq \tau m$ to see that 
	\begin{equation}
		\label{eq:alphatau}
		\alpha_k 
		\leq \frac{\nu(\tau,\calX)}{2m - 4 \nu(\tau,\calX) \tau^{-1}}
		\leq \frac{\nu(\tau,\calX)}{6\nu(\tau,\calX) \tau^{-1} - 4 \nu(\tau,\calX) \tau^{-1}}
		\leq \frac \tau 2.
	\end{equation}
	
	We first deal with the ``good" set $\calG_k$. If $\calG_k=\emptyset$, then $\nu(\tau,\calG_k)=0$ and we set $g_k:=1$. Otherwise, we assume $\calG_k\not=\emptyset$. We apply \cref{prop:goodset}, where $\calG_k-x_k$, $\lceil \frac 2 \tau \rceil$, and $\nu(\tau,\calG_k)$ play the roles of $\calB$, $m$ and $r$ respectively, in the referenced proposition's notation. This provides us with a polynomial, which after shifting by $x_k$, we call it $g_k\in \calP_{\nu(\tau,\calG_k)(\lceil \frac 2 \tau \rceil-1)+1}$ such that $g_k(x_k)=1$, $g_k$ vanishes on $\calG_k$, and 
	\begin{equation}
		\label{eq:gnorm}
		\|g_k\|_{L^\infty}
		\leq \(1+\frac{1}{\lceil \frac 2 \tau \rceil\tau-1}\)^{\nu(\tau,\calG_k)/2}
		\leq 2^{\nu(\tau,\calG_k)/2}.
	\end{equation}
	Note that this statement is still valid in the corner case that $\calG_k=\emptyset$ since in this case, we have $\nu(\tau,\calG_k)=0$, which is consistent with $\|g_k\|_{L^\infty}=1$ and $g_k\in \calP_1$.
	
	Now we deal with the ``bad" set $\calB_k:=\calB(x_k,\tau,\calX)$. We use the shorthand notation $r_k:=|\calB_k|$. We are ready to employ \cref{lem:badset1}, where $\calB_k-x_k$, $n_k$, and $r_k$ play the roles of $\calB$, $n$, and $r$ respectively, in the referenced lemma's notation. Note that $n_k\geq s$ from \eqref{eq:start}. The lemma provides us with a polynomial, and after shifting by $x_k$, we call it $b_k\in \calP_{n_k}$ such that $b_k$ vanishes on $\calB_k\setminus \{x_k\}$, $b_k(x_k)=1$, and $b_k$ enjoys the estimates,
	\begin{equation*}
		\|b_k\|_{L^2}
		\leq \sqrt{\frac{1} {\lfloor \frac{n_k}{r_k}\rfloor}} \prod_{x\in\calJ_k} \frac{1}{2\lfloor \frac 1 {2 |x-x_k|_\T} \rfloor |x-x_k|_\T} \ \prod_{x\in\calI_k} \frac{1}{ 2 \lfloor \frac{n_k}{r_k}\rfloor |x-x_k|_\T}.
	\end{equation*}	
	Now we perform some algebraic manipulations and simplifications. First note that $\frac 1 {\alpha_k}\leq \frac{2n_k+2}{r_k}$, and $\lfloor \frac {n_k}{r_k} \rfloor \geq \frac {n_k}{r_k}-\frac{r_k-1}{r_k}$ since $r_k\in\N_+$. Together, they imply that 
	$$
	\frac 1 {2 \lfloor \frac {n_k}{r_k} \rfloor}
	\leq \frac{1}{\frac 1 {\alpha_k}-2}
	=\frac{\alpha_k}{1-2\alpha_k}.
	$$
	Using this observation and the definition of $\phi$ in the previous upper bound for $\|b_k\|_{L^2}$, we have
	\begin{equation}
		\label{eq:bknorm}
		\|b_k\|_{L^2}
		\leq \sqrt{\frac{2\alpha_k}{1-2\alpha_k}} \prod_{x\in\calJ_k} \phi\( \frac{1}{2|x-x_k|_\T}\) \ \prod_{x\in\calI_k} \frac{\alpha_k}{(1-2\alpha_k)|x-x_k|_\T}.
	\end{equation}
	
	We next define $f_k:=b_k g_k$. We have $f_k\in\calP_m$ because  
	\begin{align*}
		\deg(f_k)
		&=\deg(b_k)+\deg(g_k) 
		\leq \nu(\tau,\calG_k) \(\left\lceil \frac 2 \tau \right\rceil-1\)+n_k-1
		\leq \frac {2 \nu(\tau,\calG_k)} \tau +n_k-1
		\leq m-1.
	\end{align*}
	Together with the interpolation properties of $g_k$ and $b_k$, we see that $\{f_k\}_{k=1}^s\subset \calP_m$ is a family of Lagrange interpolants for $\calX$. We use H\"older's inequality and the upper bounds \eqref{eq:gnorm} and \eqref{eq:bknorm} to get
	\begin{equation*}
		\label{eq:fnorm}    
		\|f_k\|_{L^2}
		\leq \sqrt{\frac{2\alpha_k 2^{\nu(\tau,\calG_k)}}{1-2\alpha_k}}  \prod_{x\in\calJ_k} \phi\( \frac{1}{2|x-x_k|_\T}\) \prod_{x\in\calI_k} \frac{\alpha_k }{(1-2\alpha_k)|x-x_k|_\T}. 
	\end{equation*}	
	We apply \cref{lem:duality2} to complete the proof of \eqref{eq:main1}.	

	Now we proceed to further upper bound the right side of \eqref{eq:main1}. Using that $\phi(t)\leq 2$ for all $t\geq 1$, that $\alpha_k\leq \frac \tau 2\leq \frac 1 4$ due to \eqref{eq:alphatau}, we obtain
	\begin{align*}
		\frac{1}{\sigma_s^2(\Phi(m,\calX))} 
		&\leq \sum_{k=1}^s \, 2^{\nu(\tau,\calG_k)} \, 4\alpha_k \,  4^{|\calJ_k|} \prod_{x\in\calI_k} \frac{4\alpha_k^2}{|x-x_k|_\T^2} \\
		&\leq s \ \max_{1\leq k\leq s} \ \Bigg\{ 2^{\nu(\tau,\calG_k)} 4\alpha_k 4^{|\calJ_k|} \prod_{x\in\calI_k} \frac{4\alpha_k^2}{|x-x_k|_\T^2} \Bigg\}.
	\end{align*}
	Rearranging this inequality and taking the square root completes the proof  of \eqref{eq:main2}. 
\end{proof}

\subsection{Proof of \cref{thm:main2}}

\begin{proof}
	Fix any $1\leq k\leq s$. Note \eqref{eq:start} showed that $n_k\geq \lfloor \frac m 3 \rfloor$, while $n_k\leq m$ immediately by definition. The proof is analogous to the proof of \cref{thm:main}, but with a different function for the bad set. We carry over the same definitions of $n_k$ and $r_k$. There we constructed the function $g_k\in \calP_{\nu(\tau,\calG_k)(\lceil \frac 2 \tau \rceil-1)+1}$ for the ``good" set $\calG_k$ such that 
	\begin{equation}
		\label{eq:gnorm2}
		\|g_k\|_{L^\infty}
		\leq 2^{\nu(\tau,\calG_k)/2}.
	\end{equation}
	
	For the ``bad" set $\calB_k:=\calB(x_k,\tau,\calX)$, we note  that $\Delta(\calB_k)\geq \Delta(\calX)\geq \delta$. We use \cref{lem:badset2}, where $\calB_k-x_k$, $n_k$ and $r_k$ play the roles of $\calB$, $n$, and $r$ respectively, in the referenced lemma's notation. Also note that $n_k\geq s\geq r_k$ due to \eqref{eq:start}, and that $\delta\leq \frac 1 m \leq \frac 1 {n_k}$. The lemma provides us with a polynomial, and after shifting by $x_k$, we call it $b_k\in \calP_{n_k}$ such that $b_k$ vanishes on $\calB_k\setminus \{x_k\}$, $b_k(x_k)=1$, and $b_k$ enjoys the estimate,
	\begin{equation}
		\label{eq:bnorm2}
		\|b_k\|_{L^2}
		\leq \frac{2e}{\pi r_k}  \frac {1}{\sqrt{\lfloor \frac {n_k} {r_k} \rfloor}} \( \frac{4e}{\pi \psi(\frac {n_k \delta}{2}) r_k \lfloor \frac {n_k} {r_k} \rfloor  \delta} \)^{r_k-1}. 
	\end{equation}
	
	We define $f_k=g_kb_k$. Repeating the same argument as in the proof of \cref{thm:main} shows that $f_k\in\calP_m$. By construction, $\{f_k\}_{k=1}^s$ is a family of Lagrange polynomials for $\calX$.  Using H\"older's inequality, \eqref{eq:gnorm2}, \eqref{eq:bnorm2}, and the definition of $\phi$, we get
	$$
	\|f_k\|_{L^2}
	\leq \frac{2e}{\pi} \sqrt{\frac {2^{\nu(\tau,\calG_k)} \phi(\frac {n_k} {r_k})}{r_kn_k}} \( \frac{4e \phi(\frac {n_k} {r_k})}{\pi \psi(\frac {n_k \delta}{2})n_k\delta} \)^{r_k-1}. 
	$$
	Finally, using \cref{lem:duality2} completes the proof of \eqref{eq:main3}. 

	We proceed to make numerous simplifications of the right hand side of \eqref{eq:main3}. Since $\delta\leq \frac 1 m$ and $n_k\leq m$, we have $\psi(\frac {n_k \delta}{2})\geq \psi(\frac 1 2)=\frac 2 \pi$. Note that \eqref{eq:start} and the assumption $m\geq 6s$ imply
	\begin{equation*}
		r_k \left\lfloor \frac{n_k}{r_k} \right\rfloor
		\geq r_k \( \frac{n_k}{r_k} - \frac{r_k-1}{r_k} \)
		=n_k-r_k+1
		\geq m- \frac {2 \nu(\tau,\calX)} \tau -r_k
		\geq \frac{m}{3} - s
		\geq \frac m 6. 
	\end{equation*}
	Using these observations in \eqref{eq:main3} now establishes 
	\begin{equation*}
		\frac{1}{\sigma_s^2(\Phi(m,\calX))}
		\leq \frac{24 e^2}{\pi^2m} \sum_{k=1}^s  \frac{2^{\nu(\tau,\calG_k)}}{r_k} \( \frac{12 e}{m \delta} \)^{2r_k-2}
		\leq \frac{24 e^2 s}{\pi^2m} \max_{1\leq k\leq s} \Bigg\{ \frac{2^{\nu(\tau,\calG_k)}}{r_k} \( \frac{12 e}{m \delta} \)^{2r_k-2} \Bigg\}. 
	\end{equation*}
	Rearranging this inequality and taking the square root completes the proof of \eqref{eq:main4}. 
\end{proof}

\subsection{Proof of \cref{prop:mainclumps}}

\begin{proof}
	We first claim that $\calX$ satisfies the $(m,\tau)$ density criterion. This trivially holds when $r=1$ because then $\tau = \frac 1 2$ and so $\nu(\tau,\calX)=|\calX|=s=\lambda$. From here onward, assume that $r>1$. For any $x\in\calX$, let $\calC_x$ be the clump that $x$ belongs to. Since $\beta>\alpha$ by definition, we see that 
	$$
	\calC_x = \{x'\in \calX\colon |x-x'|_\T\leq \beta\},
	$$
	otherwise it would contradict the assumption that any two clumps are separated by distances strictly larger than $\beta$ and that $\diam(\calC_x)\leq \alpha<\beta$. This shows that $\nu(\tau,\calX)\leq \lambda$, and since there is a clump that has cardinality exactly equal to $\lambda$, we see that $\nu(\tau,\calX)=\lambda$. We have $\beta\geq \frac{3\lambda}{m}$ by assumption, and so  
	$$
	\frac{3\nu(\tau,\calX)}{\tau}
	=\frac{3 \lambda}{\beta}
	\leq m.
	$$
	We have shown that $\calX$ satisfies the $(m,\tau)$ density criterion. This shows that the assumptions of \cref{thm:main} and \cref{thm:main2} are satisfied. For each $k$ in the right side of \eqref{eq:main4}, we use that $r_k\leq \nu(\tau,\calX)=\lambda$ and $\nu(\tau,\calG_k)\leq \nu(\tau,\calX)=\lambda$ to complete the proof. 
\end{proof}

\subsection{Proof of \cref{thm:upper}}

\begin{proof}
	Letting $\nu:=\nu(\tau,\calX)$, by the decomposition given in \cref{prop:decomp}, we have a disjoint union 
	$$
	\calX = \calX_1 \cup \calX_2 \cup \cdots \cup \calX_{\nu}, \wherespace \Delta(\calX_k)\geq \tau \foreachspace k=1,2,\dots,\nu.
	$$
	Since the singular values of $\Phi(m,\calX)$ are invariant under permutations of its columns, after reshuffling,
	$$
	\Phi(m,\calX)=\begin{bmatrix}
		\Phi(m,\calX_1) & \Phi(m,\calX_2) &\cdots &\Phi(m,\calX_\nu)
	\end{bmatrix}.
	$$
	Let $u\in \C^{|\calX|}$ be any unit norm vector, and likewise, we partition $u$ into sub-vectors $u_1,u_2,\dots,u_\nu$ such that $u_k\in \C^{|\calX_k|}$. Since $\Delta(\calX_k)\geq \tau$ for each $k$ and $m>\frac 1 \tau$, we use the upper bound in \eqref{eq:wellseparated} to get
	\begin{align*}
		\|\Phi(m,\calX) u\|_2
		=\left\| \sum_{k=1}^\nu \Phi(m,\calX_k) u_k \right\|_2
		\leq \sqrt {m + \frac 1 \tau} \ \sum_{k=1}^\nu \|u_k\|_2.
	\end{align*}
	Using Cauchy-Schwarz and that $u$ has unit norm, we obtain 
	$$
	\sum_{k=1}^\nu \|u_k\|_2
	\leq \sqrt \nu \, \( \sum_{k=1}^\nu \|u_k\|_2^2\)^{1/2}
	= \sqrt \nu \|u\|_2
	= \sqrt \nu.  
	$$
	Combining the above inequalities completes the proof. 
\end{proof}

\section*{Computational costs and benefits}

Selecting reasonable $\tau$ to use in the main theorems can be done computationally. Let $\calT\subset (0,\frac 12]$ be a collection of $\tau$ for which we would like to evaluate the main estimates. We proceed to analyze the naive time and storage complexity of evaluating the main estimates for each $\tau\in \calT$. All of the main inequalities are simple expressions that can be evaluated once the various parameters are determined, which importantly, only depend on $\tau$ and distances between elements in $\calX$. 

As overhead, we first compute the distances between all ${s\choose 2}$ distinct pairs of elements in $\calX$, which requires $O(s^2)$ operations and storage. As we will see, computation of the local sparsity is the main bottleneck. For an arbitrary and finite set $\calU\subset\T$, computation of $\nu(\tau,\calU)$ requires $O(|\calU|^2)$ operations since we need to enumerate through each $u\in \calU$ and find the number of $u'\in U$ such that $|u-u'|\leq \tau$.

Coming back to the task at hand, we perform a first loop over each $x_k\in \calX$ and $\tau\in \calT$, in order to compute $|\calB(x_k,\tau,\calX)|$ and $\nu(\tau,\calG(x_k,\tau,\calX))$. This loop requires $O(s^3|\calT|)$ time and $O(s|\calT|)$ storage. After the first loop executes, we perform a second loop through each $x_k\in \calX$ and $\tau\in \calT$. In this loop, we calculate the necessary parameters (e.g., $\alpha_k$, $n_k$) and each of the $s$ terms in the main inequalities. This second loop requires $O(s^2|\calT|)$ time for \cref{thm:main} and $O(s|\calT|)$ time for \cref{thm:main2}. Then we perform a final minimization or summation (depending on which inequality is evaluated), and then find the the optimal $\tau\in \calT$. Ultimately, computation of the optimal $\tau\in \calT$ for any of these theorems requires $O(s^3|\calT|)$ operations and $O(s^2+|\calT|)$ storage. 

Let us quickly remark that these complexities are for arbitrary $\calX$ and $\calT$, and the computations can be sped up under special cases. Suppose $\calX$ consists of clumps and $\calT \subset [\alpha,\beta]$, where $\alpha$ and $\beta$ are clump parameters as in \cref{def:clumpsdecomp}. Notice that $|\calB(x_k,\tau,\calX)|$ is the cardinality of the clump that contains $x_k$, while $\calG(x_k,\tau,\calX)$ consists of all the other clumps that do not contain $x_k$, and $\nu(\tau,\calG(x_k,\tau,\calX))$ is the cardinality of the largest clump that does not contain $x_k$. All of them do not vary with $\tau\in \calT$. So these quantities can be computed with $O(s^2)$ operations instead of $O(s^3|\calT|)$. 

Another possibility of improvement is to take advantage of redundancies in these calculations when looping through $\tau\in \calT$. Notice that $|\calB(x_k,\tau,\calX)|$ and $\nu(\tau,\calG(x_k,\tau,\calX))$ are piece-wise constant functions with at most $s-1$ break points, so there is no need to enumerate through $\calT$. So the $|\calT|$ terms in the above complexities can be replaced with $s^2$ whenever $|\calT|\geq s^2$. However, in our experience, we usually have a good sense for what range of $\tau$ is appropriate, so only a few different $\tau$ need to be evaluated. Moreover, the loop over $\calT$ can be parallelized to further reduce the scaling in $|\calT|$.

In comparison, the singular value decomposition of $\Phi(m,\calX)$ requires $O(m^2s)$ operations and $O(ms)$ storage; recall that $m\geq s$ and we require $m\geq 6s$ to use the main theorems. Depending on the relationship between $m$, $s$, and $|\calT|$, it may be significantly faster to compute the expressions of the main theorems to obtain approximations rather than numerically computing the actual smallest singular value. This gain is particularly noticeable for many signal and image processing problems where $m\gg s$. 

Our theorems offer significant computational benefits when there is uncertainty in $\calX$ or freedom to choose $\calX$. For instance, consider a situation where each $x_k$ lies in some interval $[x_k^*-\epsilon_k,x_k^*+\epsilon_k]$. This may occur if we only have some rough idea of what $x_k$ is or if we are forced to select $x_k$ in this interval due to some constraint. Hence, we are dealing with infinitely many possible sets $\calX$. In principle, we could $N$ many possible $\calX$, and use the SVD to calculate the singular values of $\Phi(m,\calX)$, which requires $O(Nm^2 s)$ operations.

On the other hand, our theorems give interpretable bounds for the smallest singular value in terms of the geometry of $\calX$, so we have some understanding of which sets lead to poor condition numbers. This greatly reduces the number of sets $\calX$ that would be considered, from large $N$ to much smaller $n$. Hence, it suffices to compute $n$ many SVDs, which requires $O(nm^2s)$ operations, or evaluate our main theorems for $n$ many $\calX$ and $|\calT|$ many $\tau$, which requires $O(ns^3|\calT|)$ operations. 

\section*{Conclusion and future work}

This paper presented multiscale estimates for the condition number of Fourier matrices for general $\calX$ provided that there is a modicum of redundancy, $m\geq 6s$. The main results are completely new whenever $\Delta(\calX)<\frac 1 m$ and $\calX$ does not consist of separated clumps. Even in the clump framework, the main results significantly reduce sufficient conditions of prior works and achieve similar estimates. The main results also greatly improve upon classical estimates and provide a unified framework for dealing with a disparate collection of sets, which were previously treated on a case-by-case basis.  

We state one immediate consequence of the main results. It was shown in \cite{li2020super} that the stability of a foundational algorithm called ESPRIT \cite{roy1989esprit} used for signal processing enjoys (under suitable conditions) the error estimate
$$
\text{error} \leq \frac{C_s \cdot \text{noise}}{\sigma_s^2(\Phi(m,\calX))}. 
$$ 
A significance of this inequality is that it establishes ESPRIT is near min-max optimal. Since this paper greatly enlarges the collection of $\calX$ for which we have accurate estimates for $\sigma_s(\Phi(m,\calX))$, it yields significant practical implications for ESPRIT and related signal processing algorithms and applications such as \cite{li2022stability}. These improvements and their implications will be discussed in a separate article. 

Returning to discussion of main results, a natural question is the selection of an optimal scale parameter $\tau$ for which to invoke the main inequalities in \cref{thm:main,thm:main2}. This is not a simple task (for theoretical purposes) and greatly depends on $\calX$. We saw examples where the best effective scale $\tau$ is on the order of $\frac 1 m$ such as for clumps, whereas $\tau=\frac 1 2$ for sparse spike trains. These polarizing examples illustrate that the optimal effective scale does not only depend on $m$, $s$, and/or $\Delta(\calX)$, but on more complicated relationships depending on $\calX$.

Regarding the main theorems' assumptions, they can be weakened to $m\geq 3s$ and
$
\frac{2\nu(\tau,\calX)}{\tau}+s \leq m-1
$
without significant modifications to the main proofs. However, doing so would change the numerical constants in a rather undesirable way. For this reason, we decided to state the main results with a stronger than necessary conditions. The techniques introduced in this paper are unable to deal with the extreme case where $m\geq s$ and $\frac{\nu(\tau,\calX)}{2\tau} \leq m$. This is due to splitting the good and bad sets into separate problems, which comes at a cost of making the interpolants' degrees larger than necessary. To circumvent this, one can handle the good and bad sets in a unified manner and construct interpolants in a completely different way. Our construction of these alternative polynomials have horribly large norms, which in turn, yields a lower bound for $\sigma_s(\Phi)$ that appears to have limited use outside of special contexts. 

Many of the techniques and ideas in this paper, including the polynomial method, are flexible. They can be altered to deal with more restricted classes of $\calX$ if desired and can be extended to multivariate Fourier matrices. Such a matrix has the form $\Phi=[e^{2\pi i j\cdot x_k}]_{j\in\Omega,\, x_k\in \calX}$ for some $\calX=\{x_k\}_{k=1}^s\subset \T^d$ and $\Omega \subset\R^d$. There are many open questions about the condition number of multivariate Fourier matrices and their behavior greatly depends on the structure of both $\calX$ and $\Omega$. From the dual perspective, interpolation by multivariate polynomials is also much more involved. Due to these added technical difficulties and important differences, we deal with multidimensional matrices in a separate article \cite{li2024nonharmonic}.

\section*{Acknowledgments}

WL is supported by NSF-DMS Award \#2309602, a PSC-CUNY grant, and a start-up fund from the Foundation for City College. The author thanks John J. Benedetto, Albert Fannjiang, Wenjing Liao, and Kui Ren for helpful feedback and suggestions. The author also thanks an anonymous reviewer for his/her careful reading and insightful suggestions that led to many improvements of the initial draft.

\bibliography{ImprovedFourierBib.bib}

\begin{thebibliography}{10}

\bibitem{aubel2019vandermonde}
C{\'e}line Aubel and Helmut B{\"o}lcskei.
\newblock Vandermonde matrices with nodes in the unit disk and the large sieve.
\newblock {\em Applied and Computational Harmonic Analysis}, 47(1):53--86,
  2019.

\bibitem{barnett2022exponentially}
Alex~H Barnett.
\newblock How exponentially ill-conditioned are contiguous submatrices of the
  {Fourier} matrix?
\newblock {\em SIAM Review}, 64(1):105--131, 2022.

\bibitem{batenkov2020conditioning}
Dmitry Batenkov, Laurent Demanet, Gil Goldman, and Yosef Yomdin.
\newblock Conditioning of partial nonuniform {Fourier} matrices with clustered
  nodes.
\newblock {\em SIAM Journal on Matrix Analysis and Applications},
  41(1):199--220, 2020.

\bibitem{batenkov2021spectral}
Dmitry Batenkov, Benedikt Diederichs, Gil Goldman, and Yosef Yomdin.
\newblock The spectral properties of {Vandermonde} matrices with clustered
  nodes.
\newblock {\em Linear Algebra and its Applications}, 609:37--72, 2021.

\bibitem{batenkov2021single}
Dmitry Batenkov and Gil Goldman.
\newblock Single-exponential bounds for the smallest singular value of
  {Vandermonde} matrices in the sub-{Rayleigh} regime.
\newblock {\em Applied and Computational Harmonic Analysis}, 55:426--439, 2021.

\bibitem{batenkov2021super}
Dmitry Batenkov, Gil Goldman, and Yosef Yomdin.
\newblock Super-resolution of near-colliding point sources.
\newblock {\em Information and Inference: A Journal of the IMA},
  10(2):515--572, 2021.

\bibitem{bazan2000conditioning}
Ferm{\'\i}n~SV Baz{\'a}n.
\newblock Conditioning of rectangular {Vandermonde} matrices with nodes in the
  unit disk.
\newblock {\em SIAM Journal on Matrix Analysis and Applications},
  21(2):679--693, 2000.

\bibitem{benedetto2020super}
John~J Benedetto and Weilin Li.
\newblock Super-resolution by means of {Beurling} minimal extrapolation.
\newblock {\em Applied and Computational Harmonic Analysis}, 48(1):218--241,
  2020.

\bibitem{berman2007perfect}
Lihu Berman and Arie Feuer.
\newblock On perfect conditioning of {V}andermonde matrices on the unit circle.
\newblock {\em The Electronic Journal of Linear Algebra}, 16:157--161, 2007.

\bibitem{beurling1989balayage}
Arne Beurling.
\newblock Balayage of {Fourier-Stieltjes} transforms.
\newblock {\em The Collected Works of Arne Beurling}, 2:341--350, 1989.

\bibitem{beurling1989interpolation}
Arne Beurling.
\newblock Interpolation for an interval in $\mathbb{R}^1$.
\newblock {\em The Collected Works of Arne Beurling}, 2:351--365, 1989.

\bibitem{casazza2012finite}
Peter~G Casazza and Gitta Kutyniok.
\newblock {\em Finite frames: Theory and applications}.
\newblock Springer Science \& Business Media, 2012.

\bibitem{chandrasekaran2013minimum}
Shivkumar Chandrasekaran, Karthik~R Jayaraman, and Hrushikesh~Narhar Mhaskar.
\newblock Minimum {Sobolev} norm interpolation with trigonometric polynomials
  on the torus.
\newblock {\em Journal of Computational Physics}, 249:96--112, 2013.

\bibitem{chui2022super}
Charles~K Chui.
\newblock Super-resolution wavelets for recovery of arbitrarily close
  point-masses with arbitrarily small coefficients.
\newblock {\em Applied and Computational Harmonic Analysis}, 61:202--253, 2022.

\bibitem{cordova1990vandermonde}
Antonio C{\'o}rdova, Walter Gautschi, and Stephan Ruscheweyh.
\newblock Vandermonde matrices on the circle: spectral properties and
  conditioning.
\newblock {\em Numerische Mathematik}, 57(1):577--591, 1990.

\bibitem{demanet2015recoverability}
Laurent Demanet and Nam Nguyen.
\newblock The recoverability limit for superresolution via sparsity.
\newblock {\em arXiv preprint arXiv:1502.01385}, 2015.

\bibitem{donoho1992superresolution}
David~L. Donoho.
\newblock Superresolution via sparsity constraints.
\newblock {\em SIAM Journal on Mathematical Analysis}, 23(5):1309--1331, 1992.

\bibitem{duffin1952class}
Richard~J Duffin and Albert~C Schaeffer.
\newblock A class of nonharmonic {Fourier} series.
\newblock {\em Transactions of the American Mathematical Society},
  72(2):341--366, 1952.

\bibitem{dutt1993fast}
Alok Dutt and Vladimir Rokhlin.
\newblock Fast {F}ourier transforms for nonequispaced data.
\newblock {\em SIAM Journal on Scientific computing}, 14(6):1368--1393, 1993.

\bibitem{fannjiang2010compressed}
Albert~C Fannjiang, Thomas Strohmer, and Pengchong Yan.
\newblock Compressed remote sensing of sparse objects.
\newblock {\em SIAM Journal on Imaging Sciences}, 3(3):595--618, 2010.

\bibitem{gautschi1963inverses}
Walter Gautschi.
\newblock On inverses of vandermonde and confluent vandermonde matrices.
\newblock {\em Numerische Mathematik}, 5:425--430, 1963.

\bibitem{kunis2020smallest}
Stefan Kunis and Dominik Nagel.
\newblock On the smallest singular value of multivariate vandermonde matrices
  with clustered nodes.
\newblock {\em Linear Algebra and its Applications}, 604:1--20, 2020.

\bibitem{kunis2021condition}
Stefan Kunis and Dominik Nagel.
\newblock On the condition number of vandermonde matrices with pairs of
  nearly-colliding nodes.
\newblock {\em Numerical Algorithms}, 87:473--496, 2021.

\bibitem{landau1967necessary}
Henry~J. Landau.
\newblock Necessary density conditions for sampling and interpolation of
  certain entire functions.
\newblock {\em Acta Mathematica}, 117:37--52, 1967.

\bibitem{li2021generalization}
Weilin Li.
\newblock Generalization error of minimum weighted norm and kernel
  interpolation.
\newblock {\em SIAM Journal on Mathematics of Data Science}, 3(1):414--438,
  2021.

\bibitem{li2024nonharmonic}
Weilin Li.
\newblock Nonharmonic multivariate fourier transforms and matrices: condition
  numbers and hyperplane geometry.
\newblock {\em arXiv preprint arXiv:2407.10313}, 2024.

\bibitem{li2021stable}
Weilin Li and Wenjing Liao.
\newblock Stable super-resolution limit and smallest singular value of
  restricted fourier matrices.
\newblock {\em Applied and Computational Harmonic Analysis}, 51:118--156, 2021.

\bibitem{li2020super}
Weilin Li, Wenjing Liao, and Albert Fannjiang.
\newblock Super-resolution limit of the {ESPRIT} algorithm.
\newblock {\em IEEE Transactions on Information Theory}, 66(7):4593--4608,
  2020.

\bibitem{li2022stability}
Weilin Li, Zengying Zhu, Weiguo Gao, and Wenjing Liao.
\newblock Stability and super-resolution of {MUSIC} and {ESPRIT} for
  multi-snapshot spectral estimation.
\newblock {\em IEEE Transactions on Signal Processing}, 70:4555--4570, 2022.

\bibitem{liao2016music}
Wenjing Liao and Albert Fannjiang.
\newblock Music for single-snapshot spectral estimation: Stability and
  super-resolution.
\newblock {\em Applied and Computational Harmonic Analysis}, 40(1):33--67,
  2016.

\bibitem{moitra2015matrixpencil}
Ankur Moitra.
\newblock Super-resolution, extremal functions and the condition number of
  {V}andermonde matrices.
\newblock {\em Proceedings of the Forty-Seventh Annual ACM Symposium on Theory
  of Computing}, 2015.

\bibitem{montgomery1978analytic}
Hugh~L Montgomery.
\newblock The analytic principle of the large sieve.
\newblock {\em Bulletin of the American Mathematical Society}, 84(4):547--567,
  1978.

\bibitem{montgomery1973large}
Hugh~Lowell Montgomery and Robert~Charles Vaughan.
\newblock The large sieve.
\newblock {\em Mathematika}, 20(2):119--134, 1973.

\bibitem{narcowich2004scattered}
Francis~J. Narcowich and Joseph~D. Ward.
\newblock Scattered-data interpolation on $\mathbb{R}^n$: error estimates for
  radial basis and band-limited functions.
\newblock {\em SIAM Journal on Mathematical Analysis}, 36(1):284--300, 2004.

\bibitem{plonka2018numerical}
Gerlind Plonka, Daniel Potts, Gabriele Steidl, and Manfred Tasche.
\newblock {\em Numerical Fourier Analysis}.
\newblock Springer, 2018.

\bibitem{ren2022a}
Kui Ren, Yunan Yang, and Bj{\"o}rn Engquist.
\newblock A generalized weighted optimization method for computational learning
  and inversion.
\newblock In {\em International Conference on Learning Representations}, 2022.

\bibitem{roy1989esprit}
Richard Roy and Thomas Kailath.
\newblock {ESPRIT}-estimation of signal parameters via rotational invariance
  techniques.
\newblock {\em IEEE Transactions on Acoustics, Speech, and Signal Processing},
  37(7):984--995, 1989.

\bibitem{selberg1989collected}
Atle Selberg.
\newblock {\em Collected papers}, volume~1.
\newblock Springer, 1989.

\bibitem{vaaler1985some}
Jeffrey~D. Vaaler.
\newblock Some extremal functions in {F}ourier analysis.
\newblock {\em Bulletin of the American Mathematical Society}, 12(2):183--216,
  1985.

\bibitem{xie2022overparameterization}
Yuege Xie, Hung-Hsu Chou, Holger Rauhut, and Rachel Ward.
\newblock Overparameterization and generalization error: weighted trigonometric
  interpolation.
\newblock {\em SIAM Journal on Mathematics of Data Science}, 4(2):885--908,
  2022.

\bibitem{young1981introduction}
Robert~M. Young.
\newblock {\em An introduction to nonharmonic {Fourier} series}.
\newblock Academic press, 1981.

\bibitem{yu2023stability}
Annan Yu and Alex Townsend.
\newblock On the stability of unevenly spaced samples for interpolation and
  quadrature.
\newblock {\em BIT Numerical Mathematics}, 63(2):23, 2023.

\bibitem{zygmund1959trigonometric}
Antoni Zygmund.
\newblock {\em Trigonometric Series}, volume~1.
\newblock Cambridge University Press, 1959.

\end{thebibliography}
\bibliographystyle{plain}

% % % % % % % % % % % % % % % % % % % %
\end{document}